\definecolor{refkey}{gray}{.75}
\definecolor{labelkey}{gray}{.5}
\numberwithin{equation}{section}
\newtheorem{theorem}{Theorem}[section]
\newtheorem{proposition}[theorem]{Proposition}
\newtheorem{lemma}[theorem]{Lemma}
\newtheorem{corollary}[theorem]{Corollary}
\theoremstyle{definition}
\newtheorem{definition}[theorem]{Definition}
\newtheorem{example}[theorem]{Example}
\theoremstyle{remark}
\newtheorem*{remark}{Remark}
\newcommand{\abs}[1]{\left\lvert#1\right\rvert}
\newcommand{\norm}[1]{\left\lVert#1\right\rVert}
\newcommand{\ceil}[1]{\left\lceil #1 \right\rceil}
\newcommand{\paren}[1]{\left( #1 \right)}
\newcommand{\set}[1]{\left\{ #1 \right\}}
\newcommand{\cond}[2]{\left( #1 \;\middle\vert\; #2 \right)}
\newcommand{\one}{\mathbbm{1}}
\newcommand{\wt}{\widetilde}
\newcommand{\op}{\mathrm{op}}
\newcommand{\ol}{\overline}
\newcommand{\ul}{\underline}
\newcommand{\uq}{\underline{q}}
\newcommand{\oq}{\overline{q}}
\renewcommand{\epsilon}{\varepsilon}
\newcommand{\x}{\times}
\newcommand{\e}{\epsilon}
\renewcommand{\P}{\mathbb{P}}
\newcommand{\R}{\mathbb{R}}
\newcommand{\cB}{\mathcal{B}}
\newcommand{\cW}{\mathcal{W}}
\newcommand{\cE}{\mathcal{E}}
\newcommand{\cG}{\mathcal{G}}
\newcommand{\Ip}{h_p}
\newcommand{\ER}{\widetilde{\mathcal{C}}}
\tikzstyle{P} = [draw, circle, black, fill, inner sep = 0pt, minimum width = 3pt]
\tikzstyle{every loop} = []
\newcommand{\tikzind}{
  \begin{tikzpicture}[baseline, yshift=3pt]
    \path[use as bounding box] (-.35,-.2) rectangle (.6,.2);
    \draw (0.5,0)
  node[P] {} -- (0,0) node[P] {} edge[-,in = 135, out = 225, loop] ();
\end{tikzpicture}
}
\author{Eyal Lubetzky}
\address{E.\ Lubetzky\hfill\break
Microsoft Research\\ One Microsoft Way\\ Redmond, WA 98052-6399.}
\email{eyal@microsoft.com}
\author{Yufei Zhao}
\address{Y.\ Zhao\hfill\break
Department of Mathematics\\ MIT\\ Cambridge, MA 02139-4307.}
\email{yufeiz@math.mit.edu}
\title{On replica symmetry of large deviations in random graphs}
\begin{document}

\begin{abstract}
The following question is due to Chatterjee and Varadhan (2011).
Fix $0<p<r<1$ and take $G\sim \cG(n,p)$, the Erd\H{o}s-R\'enyi random graph with edge density $p$,
 conditioned to have at least as many triangles
as the typical $\cG(n,r)$. Is $G$ close in cut-distance to a typical $\cG(n,r)$?
Via a beautiful new framework for large deviation principles in $\cG(n,p)$, Chatterjee and Varadhan gave bounds on the \emph{replica symmetric} phase, the region of $(p,r)$ where the answer is positive. They further showed that for any small enough $p$ there are at least \emph{two} phase transitions as $r$ varies.

We settle this question by identifying the replica symmetric phase for triangles and more generally for any fixed $d$-regular graph.
By analyzing the variational problem arising from the framework of Chatterjee and Varadhan we show that the replica symmetry phase consists of all $(p,r)$ such that $(r^d,\Ip(r))$ lies on the convex minorant of $x\mapsto \Ip(x^{1/d})$ where $\Ip$ is the rate function of a binomial with parameter $p$.
In particular, the answer for triangles involves $\Ip(\sqrt{x})$ rather than the natural guess of $\Ip(x^{1/3})$ where symmetry was previously known.
Analogous results are obtained for linear hypergraphs as well as the setting where the largest eigenvalue of $G\sim\cG(n,p)$ is conditioned to exceed the typical value of the largest eigenvalue of $\cG(n,r)$.
Building on the work of Chatterjee and Diaconis (2012) we obtain additional results on a class of exponential random graphs including a new range of parameters where symmetry breaking occurs. En route we give a short alternative proof of a graph homomorphism inequality due to Kahn~(2001) and Galvin and Tetali~(2004).
\end{abstract}

\maketitle

\vspace{-0.25in}

\section{Introduction} \label{sec:intro}

The following question was raised by Chatterjee and Varadhan~\cite{CV11} concerning large deviations in $\cG(n,p)$, the Erd\H{o}s-R\'enyi random graph on $n$ vertices with edge density $p$.
\begin{quote}
  Fix $0 < p < r < 1$ and let $G_n$ be an instance of $\cG(n,p)$ conditioned on the rare event of having at least
  as many triangles as a typical instance of $\cG(n,r)$.
  Is it the case that as $n\to\infty$ the graph $G_n$ is close in cut-distance to a typical $\cG(n,r)$ graph?
\end{quote}
(A more formal statement, including the definition of the graph cut-metric, is postponed to \S\ref{sec:intro-phase-diagrams}.)
This amounts to asking whether the likely reason for too many triangles is an overwhelming number of edges, uniformly distributed, or some fewer edges arranged in a special structure, e.g., a clique.
Dubbed \emph{replica symmetry} and \emph{symmetry breaking}, resp., the dichotomy between these scenarios turns out to depend on $p$ and $r$. Intriguingly, it was known that for small enough $p$  there are at least \emph{two} phase transitions as $r$ increases from $p$ to $1$, with symmetry replica near the two endpoints.

In this work we analyze the variational problems arising from the framework of Chatterjee and Varadhan and obtain a full answer for the question above, as depicted in Fig.~\ref{fig:phase}. More generally, we identify the phase diagram for upper tails of any fixed regular subgraph and derive related results in other random graph settings, e.g., exponential random graphs, random hypergraphs, etc.

\begin{figure}
  \centering
  \begin{tikzpicture}
    \node[anchor=south west] (plot) at (0,0)
    {\includegraphics{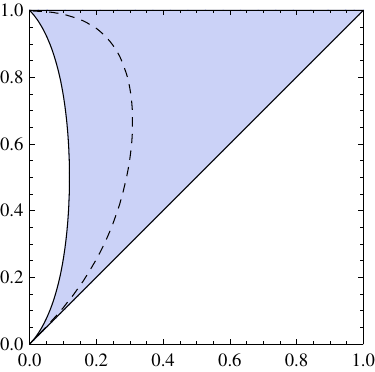}};

    \begin{scope}[x={(plot.south east)},y={(plot.north west)}]
      \path[use as bounding box] (0,-.02) rectangle (1,1);
      \node[below] at (0.52,0) {$p$};
      \node[left] at (0,0.52) {$r$};
    \end{scope}
  \end{tikzpicture}
  \caption{Phase diagram for the upper tail of triangle
    counts. Shaded region is the replica symmetric phase; the
    region to its left is the symmetry breaking phase. Previous
    results~\cite{CD10,CV11} established replica symmetry
    to the right of the dashed curve.}
  \label{fig:phase}
  \vspace{-0.1cm}
\end{figure}

\subsection{Subgraph densities and spectral radii}\label{sec:intro-phase-diagrams}

Large deviations for subgraph
densities in random graphs have been extensively studied (see, e.g.,~\cite{JR02,Vu01,KV04,JOR04,JR04,Cha12,DK12b,DK12} as well as~\cite{Bol,JLR} and the references therein).
A representing example which drew significant attention is upper tails of triangle counts, i.e., estimating the probability that $\cG(n,p)$ has at least $\binom{n}{3}r^3 $ triangles where $r=(1+ \eta)p$ for fixed $\eta>0$ (allowing $p$ to vary with $n$), a problem whose understanding is still incomplete. The order of the rate function (the normalized logarithm of this probability) when $p\to 0$ was only very recently settled: Chatterjee~\cite{Cha12} and DeMarco and Kahn~\cite{DK12} independently established it to be $ n^2p^2 \log (1/p)$
when $p \gtrsim \log n / n$, and yet the exact rate function remains unknown in this range of $p$. We now turn to what was known for fixed $p$, our focus in this paper.

Clearly, if the total number of edges in $\cG(n,p)$ deviates to $m\sim \binom{n}2 r$ then one will arrive at a random graph with $m$ uniformly distributed edges featuring the desired number of triangles. Thus, the large deviation rate function for encountering $\binom{n}3 r^3$ triangles in $\cG(n,p)$ is at most
$\Ip(r)$, where
\begin{equation}
  \label{eq:Ip}
\Ip(x) := x \log \frac{x}{p} + (1-x) \log \frac{1-x}{1-p}\quad\mbox{ for $p \in (0,1)$ and $x \in [0,1]$}
\end{equation}
is the rate function associated to the binomial distribution with probability $p$. 
However, it is possible that other configurations with broken symmetry would give rise to lower rate functions.

As an application of Stein's method for concentration inequalities, Chatterjee and
Dey~\cite{CD10} found a range of $(p,r)$ where the large deviation rate function for triangles
is equal to $\Ip(r)$, namely when $p \geq 2/(2+e^{3/2}) \approx 0.31$ or when $r$ is suitably close either to $p$
or to $1$. This symmetry region was explicitly stated in~\cite[Theorem 4.3]{CV11} as all pairs $(p,r)$
where $(r^3,\Ip(r))$ lies on the convex minorant of $x\mapsto \Ip(x^{1/3})$.
The breakthrough work of Chatterjee and Varadhan~\cite{CV11} introduced a remarkable general framework for large deviation principles in $\cG(n,p)$ via
Szemer\'edi's regularity lemma~\cite{Sze78} and the theory of
graph limits by Lov\'asz et al.~\cite{LS06,LS07,BCLSV08}. It expressed the large deviation rate function, and moreover the \emph{structure} of the random graph conditioned on the large deviation, in terms of a variational problem on \emph{graphons}, the infinite-dimensional limit objects for graph sequences.

Although often this variational problem is untractable, for triangles in the mentioned range of $(p,r$) it was shown in~\cite{CV11} to have a unique and symmetric solution.
To formalize this symmetry, we say a graph $G$ is close in cut-distance to a typical $\cG(n,r)$ graph if all induced subgraphs on a linear number of vertices have edge
density close to $r$. More precisely, for a graph $G$ and $r \in [0,1]$ let
\[
\delta_\square(G, r) := \sup_{A, B \subset V(G)} \frac{1}{\abs{V(G)}^2}
\big|e_G(A, B) - r\abs{A}\abs{B}\big|\,,
\]
where $e_G(A,B)$ is the number of pairs $(a,b)\in A\times B$ with $ab\in E(G)$.
Chatterjee and Varadhan showed that, in the above range of $(p,r)$, if $G_n\sim \cG(n,p)$ is conditioned to have at least $\binom{n}3 r^3$ triangles then $\delta_\square(G_n,r)\to 0$ in probability as $n\to\infty$.
The function $x\mapsto\Ip(x^{1/3})$ governing that region
is the natural candidate for the phase boundary as the cube-root accounts for the 3 edges of the triangle (see, e.g.,~\cite[\S4.5.2]{DZ} for related literature), and indeed Chatterjee and Varadhan asked whether this precisely characterizes the full replica symmetric phase.
As it turns out, however, the replica symmetric phase is strictly larger, being governed instead by $x\mapsto\Ip(\sqrt{x})$. (See Fig.~\ref{fig:phase}.)

\begin{figure}
  \centering
  \begin{tikzpicture}
    \node[anchor=south west] (plot) at (0,0)
    {\includegraphics{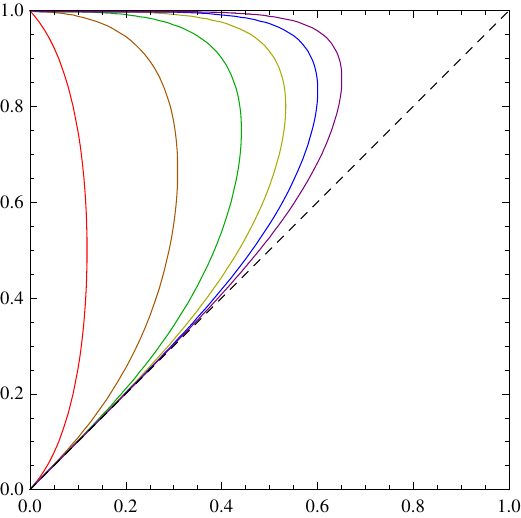}};
    \begin{scope}[x={(plot.south east)},y={(plot.north west)}]
      \path[use as bounding box] (0,-.02) rectangle (1,1);
      \node[below] at (0.52,0) {$p$};
      \node[left] at (0,0.52) {$r$};
      \node at (.12,.55) {\color{red}{\footnotesize$d=2$}};
      \node at (.28,.65) {\color{orange!67!black}{\footnotesize$d=3$}};
      \node at (.41, .7) {\color{green!67!black}{\footnotesize$d=4$}};
      \node at (.54, .77) {\color{yellow!67!black}{\footnotesize$d=5$}};
      \node at (.6, .83) {\color{blue}{\footnotesize$d=6$}};
      \node at (.7, .89) {\color{purple}{\footnotesize$d=7$}};
    \end{scope}
  \end{tikzpicture}
  \caption{The phase boundary for counts of $d$-regular fixed subgraphs in $\cG(n,p)$.}
  \label{fig:phase-d}
    \vspace{-0.2cm}
\end{figure}

For any graph $G$ let $e(G) = \abs{E(G)}$, and for any two graphs $G$ and $H$ let $\hom(H, G)$ denote the number of
homomorphisms from $H$ to $G$ (i.e., maps $V(H) \to V(G)$ that carry
edges to edges). Let
\[
t(H,G) := \frac{\abs{\hom(H,G)}}{\abs{V(G)}^{\abs{V(H)}}}
\]
be the 
probability that a random map $V(H) \to V(G)$ is a graph homomorphism.
We now state our main result on the phase diagram for large deviations in densities of $d$-regular subgraphs.

{\samepage
\begin{theorem}
  \label{thm:count-ldp}
  Fix $0 < p \leq r <
  1$ and let $H$ be a fixed $d$-regular graph for some $d\geq 2$. Let $G_n\sim\cG(n,p)$ be the
  Erd\H{o}s-R\'enyi random graph on $n$ vertices with edge probability
  $p$.
  \begin{enumerate}[(i)]
  \item\label{it-thm1-1} If the point $(r^d,\Ip(r))$
  lies on the convex minorant of the function $x \mapsto
  \Ip(x^{1/d})$ then
  \[
  \lim_{n \to \infty} \frac{1}{\binom{n}{2}} \log \P\paren{t(H, G_n)
    \geq r^{e(H)} } = - \Ip(r)
  \]
  and furthermore, for every $\epsilon > 0$ there exists some $C=C(H,\e, p, r)>0$ such that for all $n$,
  \[
  \P \cond{\delta_\square(G_n, r) < \e}{t(H, G_n) \geq
    r^{e(H)}} \geq 1 - e^{-C n^2}\,.
  \]
\item\label{it-thm1-2} If the point $(r^d,\Ip(r))$ does not lie on the convex
  minorant of the function $x \mapsto \Ip(x^{1/d})$ then
  \[
  \lim_{n \to \infty} \frac{1}{\binom{n}{2}} \log \P\paren{t(H, G_n)
    \geq r^{e(H)} } > - \Ip(r)
  \]
  and furthermore, there exist $\e, C > 0$ such that for all $n$,
  \[
  \P \cond{
  \inf\big\{  \delta_\square(G_n,s) : 0\leq s \leq 1\big\} > \e}{t(H, G_n) \geq
    r^{e(H)}} \geq 1 - e^{-C n^2} \, .
  \]
  \end{enumerate}
 In particular, when $d = 2$, case \eqref{it-thm1-2} occurs if and only if $p < \left[1 + (r^{-1} - 1)^{1/(1-2r)}\right]^{-1}$.
\end{theorem}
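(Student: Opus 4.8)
The plan is to reduce the final assertion (the case $d=2$) to a one-variable convex-analysis computation: we must decide for which $(p,r)$ the point $(r^2,\Ip(r))$ lies on the convex minorant, call it $\hat f$, of $f(x):=\Ip(\sqrt x)$ on $[0,1]$. Writing $L:=\log\frac{1-p}{p}$, the first step is to determine the shape of $f$. Substituting $u=\sqrt x$ and using $\Ip'(u)=\log\frac{u(1-p)}{p(1-u)}$ and $\Ip''(u)=\frac1{u(1-u)}$, one computes $f''(x)=\frac{u\Ip''(u)-\Ip'(u)}{4u^3}$, so that $f''(x)$ has the sign of $\phi(u):=\frac1{1-u}-\log\frac{u}{1-u}-L$. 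Since $\phi'(u)=\frac{2u-1}{u(1-u)^2}$, the function $\phi$ decreases on $(0,\tfrac12)$, increases on $(\tfrac12,1)$, blows up at both ends, and has minimum $\phi(\tfrac12)=2-L$. Hence: if $L\le2$ then $f$ is convex, $\hat f=f$, every point $(r^2,\Ip(r))$ lies on $\hat f$, and case \eqref{it-thm1-2} never occurs; if $L>2$ then $f$ is convex on $[0,c_1]$, concave on $[c_1,c_2]$, and convex on $[c_2,1]$, where $0<c_1<\tfrac14<c_2<1$ are the squares of the two zeros of $\phi$.

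The crux — what makes the phase boundary explicit — is a hidden reflection symmetry of the affine tilt $g(x):=f(x)-Lx=\Ip(\sqrt x)-Lx$. From $\Ip(u)=u\log u+(1-u)\log(1-u)-\log(1-p)+uL$ one gets the identity $\Ip(u)-\Ip(1-u)=(2u-1)L=Lu^2-L(1-u)^2$, hence $g(u^2)=g((1-u)^2)$ for all $u\in[0,1]$; in particular $g(0)=g(1)$. Next, $g'(x)=0$ is equivalent to $\omega(u):=\log\frac{u}{1-u}-(2u-1)L=0$ with $u=\sqrt x$, where $\omega(\tfrac12)=0$, $\omega(1-u)=-\omega(u)$, $\omega(0^+)=-\infty$, $\omega(1^-)=+\infty$, and $\omega'(u)=\frac1{u(1-u)}-2L$ (so $\omega$ increases near $0$ and near $1$ and, for $L>2$, decreases near $\tfrac12$). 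It follows that for $L>2$ the function $\omega$ has exactly three zeros $y^*<\tfrac12<1-y^*$, with $\omega<0$ on $(0,y^*)\cup(\tfrac12,1-y^*)$ and $\omega>0$ on $(y^*,\tfrac12)\cup(1-y^*,1)$. Thus $g$ has critical points $x_1:=(y^*)^2<\tfrac14<x_2:=(1-y^*)^2$, the two outer ones being local minima of equal height $g(x_1)=g(x_2)$; and from $\omega'(y^*)>0$, i.e.\ $y^*(1-y^*)<\tfrac1{2L}$, one gets $\phi(y^*)=\frac1{1-y^*}-2y^*L>0$ and $\phi(1-y^*)=\frac1{y^*}-2(1-y^*)L>0$, so $x_1$ and $x_2$ lie strictly inside the convex arcs and $g$ is convex on $[0,x_1]$ and on $[x_2,1]$.

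With this in hand the convex minorant is forced. I would check that $g>g(x_1)$ on $(x_1,x_2)$ — $g$ is increasing on $[x_1,c_1]$, lies above the lesser of its two endpoint values (both $>g(x_1)$) on the concave stretch $[c_1,c_2]$, and is decreasing on $[c_2,x_2]$ — so the function that equals $g$ off $(x_1,x_2)$ and equals the constant $g(x_1)$ on $[x_1,x_2]$ is convex, lies below $g$, and dominates every convex lower bound of $g$; hence it is the convex minorant of $g$, and adding back $Lx$ shows $\hat f=f$ off $(x_1,x_2)$ with $\hat f$ affine and strictly below $f$ on $(x_1,x_2)$. Therefore $(r^2,\Ip(r))$ fails to lie on $\hat f$ exactly when $L>2$ and $r^2\in(x_1,x_2)$, i.e.\ exactly when $L>2$ and $y^*<r<1-y^*$. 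The remaining step is bookkeeping: for $r\ne\tfrac12$ the sign table for $\omega$ makes $y^*<r<1-y^*$ equivalent to $\omega(r)/(2r-1)<0$, that is $\frac1{2r-1}\log\frac{r}{1-r}<L=\log\frac{1-p}{p}$, that is $\bigl(\tfrac{1-r}{r}\bigr)^{1/(1-2r)}<\tfrac{1-p}{p}$, that is $p<\bigl[1+(r^{-1}-1)^{1/(1-2r)}\bigr]^{-1}$; at $r=\tfrac12$ both sides of the first inequality tend to $2$ and $L$ respectively as $r\to\tfrac12$, giving $p<(1+e^2)^{-1}$, which is the stated formula again since $(r^{-1}-1)^{1/(1-2r)}\to e^2$. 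To cover the regime $L\le2$, where by the first paragraph case \eqref{it-thm1-2} never occurs, I would record the elementary bound $\frac1{2r-1}\log\frac{r}{1-r}\ge2$ on $(0,1)$ — which holds because $\frac{d}{dr}\bigl[\log\frac{r}{1-r}-(4r-2)\bigr]=\frac{(2r-1)^2}{r(1-r)}\ge0$ forces $\log\frac{r}{1-r}-(4r-2)$ to have the sign of $2r-1$ — so that $p<\bigl[1+(r^{-1}-1)^{1/(1-2r)}\bigr]^{-1}$ is then false for every $r$. Combining the two regimes yields the asserted equivalence.

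The only genuine obstacle I anticipate is the shape analysis of $g$ in the case $L>2$: confirming that its two dips have equal depth and that $g$ stays above that common depth in between, so the minorant is flat there — equivalently, that the bitangent of $f$ has slope exactly $L$, which is precisely what collapses an otherwise implicit two-point condition into the clean closed form. The reflection identity $g(u^2)=g((1-u)^2)$ is exactly what makes this work; everything else — the convexity computation for $f$, the sign bookkeeping for $\omega$, and the final exponentiation into the stated shape — is routine.
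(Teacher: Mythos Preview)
Your proposal addresses only the final clause of the theorem --- the explicit $d=2$ phase boundary --- and says nothing about parts~\eqref{it-thm1-1} and~\eqref{it-thm1-2}, which are the actual content. Those parts are not bookkeeping: they require the Chatterjee--Varadhan variational reduction (Theorem~\ref{thm:variational}), the generalized H\"older bound $t(H,f)\le\|f\|_d^{e(H)}$ for $d$-regular $H$ (Corollary~\ref{cor:reg-holder}), a Jensen argument that $\|f\|_d\ge r$ forces $\Ip(f)\ge\Ip(r)$ with equality only at $f\equiv r$ precisely when $(r^d,\Ip(r))$ is on the minorant (Lemma~\ref{lem:Ld/L1-Ip}), and, for~\eqref{it-thm1-2}, an explicit graphon construction that beats the constant $r$ (Lemma~\ref{lem:break-count}). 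None of this is in your write-up, and without it the theorem is unproved.

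For the $d=2$ clause itself your argument is correct and uses the same key idea as the paper (Lemma~\ref{lem:d2-boundary}): tilt by $Lx$ with $L=\log\frac{1-p}{p}$ and exploit the reflection identity $g(u^2)=g((1-u)^2)$ to see that the bitangent of $x\mapsto\Ip(\sqrt x)$ has slope exactly $L$, so that the touching points are the zeros of $\omega(u)=\log\frac{u}{1-u}-(2u-1)L$. The paper's version is terse --- it records the symmetry and reads off the derivative condition, deferring the shape analysis of $\Ip(x^{1/\gamma})$ to a separate lemma (Lemma~\ref{lem:hp-shape}). You instead fold the shape analysis in and supply the extra verifications (that the local minima $x_1,x_2$ of $g$ sit in the convex arcs, that $g>g(x_1)$ on $(x_1,x_2)$, and the explicit treatment of the convex regime $L\le2$). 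This is more self-contained but not a different method; the decisive observation --- the $u\mapsto1-u$ symmetry of the tilted function --- is the same.
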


The boundary curves for various values of $d$ are plotted in
Fig.~\ref{fig:phase-d}. It is easy to verify (Lemma~\ref{lem:hp-shape}) that the rightmost point
in the curve for $d$-regular subgraphs is $(p,r) = \big(\frac{d-1}{d-1
  + e^{d/(d-1)}}, \frac{d - 1}d\big)$.
}

We give an analogous result for large deviations of the spectral radius
of an Erd\H{o}s-R\'enyi random graph. The phase boundary in this case coincides
with that of triangles.

\begin{theorem}
  \label{thm:spectral-ldp}
  Fix $0 < p \leq r < 1$. Let $G_n\sim \cG(n,p)$ be an Erd\H{o}s-R\'enyi random graph on $n$
  vertices with edge probability $p$, and let $\lambda_1(G_n)$ denote the largest
  eigenvalue of its adjacency matrix.
  \begin{enumerate}[(i)]
  \item\label{it-thm2-1} If $p \geq \left[1 + (r^{-1} - 1)^{1/(1-2r)}\right]^{-1}$ then
  \[
  \lim_{n \to \infty} \frac{1}{\binom{n}{2}} \log
  \P\paren{\lambda_1(G_n)\geq n r
    } = - \Ip(r)
  \]
  and furthermore, for every $\epsilon > 0$ there exists some $C=C(\e, p, r)>0$ such that for all $n$,
  \[
  \P \cond{\delta_\square(G_n, r) < \e}{\lambda_1(G_n)\geq n r} \geq 1 - e^{-C n^2}\,.
  \]
\item\label{it-thm2-2} If $p < \left[1 + (r^{-1} - 1)^{1/(1-2r)}\right]^{-1}$ then
  \[
  \lim_{n \to \infty} \frac{1}{\binom{n}{2}} \log \P\paren{\lambda_1(G_n)\geq n r} > - \Ip(r)
  \]
  and furthermore, there exist $\e, C > 0$ such that for all $n$,
  \[
  \P \cond{\inf\big\{  \delta_\square(G_n,s) : 0\leq s \leq 1\big\} > \e}{\lambda_1(G_n)\geq
    n r} \geq 1 - e^{-C n^2}.
  \]
  \end{enumerate}
\end{theorem}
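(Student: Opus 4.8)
The plan is to recast the question, via the Chatterjee--Varadhan large deviation principle \cite{CV11}, as a variational problem over graphons and then to solve that problem. For a graphon $W$ write $\Ip(W):=\int_0^1\!\!\int_0^1\Ip\big(W(x,y)\big)\,dx\,dy$ (abusing notation) and let $\lambda_1(W)$ denote the largest eigenvalue of the integral operator with kernel $W$; since $\lambda_1(G_n)/n\to\lambda_1(W)$ whenever the graphon of $G_n$ converges to $W$, the functional $\lambda_1(\cdot)$ is continuous in the cut metric $\delta_\square$, so $\{W:\lambda_1(W)\ge r\}$ is closed in the (compact) graphon space. The LDP therefore yields
\[
\lim_{n\to\infty}\frac{1}{\binom n2}\log\P\big(\lambda_1(G_n)\ge nr\big)=-\phi_p(r),\qquad \phi_p(r):=\inf\big\{\Ip(W):\lambda_1(W)\ge r\big\},
\]
the infimum being attained (lower semicontinuity of $\Ip(\cdot)$ plus compactness); by the localization part of \cite{CV11}, conditionally on $\{\lambda_1(G_n)\ge nr\}$ the graphon of $G_n$ concentrates exponentially fast on the minimizer set $F^\star$. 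It remains to identify $\phi_p(r)$ and $F^\star$.

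The constant graphon $W\equiv r$ gives $\phi_p(r)\le\Ip(r)$. For a lower bound, $\lambda_1(W)^2\le\sum_i\lambda_i(W)^2=\int_0^1\!\!\int_0^1 W(x,y)^2\,dx\,dy$, so $\lambda_1(W)\ge r$ forces $\int\!\!\int W^2\ge r^2$ and hence $\phi_p(r)\ge\psi_p(r):=\inf\{\Ip(W):\int\!\!\int W^2\ge r^2\}$. The problem defining $\psi_p$ carries a single scalar constraint, so its value depends only on the law $\nu$ of the values of $W$; a one--moment linear program forces the extremal $\nu$ onto at most two atoms, and after substituting $x\mapsto x^2$ one gets $\psi_p(r)=\wt h(r^2)$, where $\wt h$ is the convex minorant on $[0,1]$ of $x\mapsto\Ip(\sqrt x)$ (here $r>p$ lets one replace $\ge r^2$ by $=r^2$). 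This is exactly the function governing Theorem~\ref{thm:count-ldp} for $d=2$; the shape analysis of $x\mapsto\Ip(\sqrt x)$ used there --- in particular that the double tangent through $(r^2,\Ip(r))$ also touches the graph at $\big((1-r)^2,\Ip(1-r)\big)$, which pins down the critical value of $p$ --- shows $\wt h(r^2)=\Ip(r)$ precisely when $p\ge[1+(r^{-1}-1)^{1/(1-2r)}]^{-1}$ (cf.\ Lemma~\ref{lem:hp-shape}).

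When $p$ is above that threshold, the squeeze $\wt h(r^2)\le\phi_p(r)\le\Ip(r)$ gives $\phi_p(r)=\Ip(r)$, the limit claimed in the first part of Theorem~\ref{thm:spectral-ldp}. Any $W\in F^\star$ then has $\int\!\!\int W^2=r^2$ (otherwise its cost exceeds $\wt h(r^2)$, since $\wt h$ is strictly increasing on $[p^2,1]$) and realizes $\psi_p(r)$; a short argument using the local convexity of $x\mapsto\Ip(\sqrt x)$ at $x=r^2$ then identifies the unique value-law achieving $\wt h(r^2)$ as the point mass at $r$, whence $W\equiv r$. Exponential concentration on $F^\star=\{W\equiv r\}$ gives $\delta_\square(G_n,r)<\e$ with probability $1-e^{-Cn^2}$.

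Below the threshold I need only exhibit a feasible graphon of cost strictly below $\Ip(r)$. The candidate is a planted clique on a tuned background: $W=1$ on a square block $S\times S$ and $W=q$ elsewhere, with $|S|$ and $q\in[0,1]$ free. This is a rank-two graphon whose two positive eigenvalues are the roots of an explicit quadratic in $(|S|,q)$, so the constraint $\lambda_1(W)\ge r$ cuts out an explicit region of the $(|S|,q)$--plane, on which the cost equals $|S|^2\log(1/p)+(1-|S|^2)\Ip(q)$. The heart of the matter --- the step I expect to be the main obstacle --- is the calculus showing that the minimum of this cost over the feasible region drops below $\Ip(r)$ exactly when $p<[1+(r^{-1}-1)^{1/(1-2r)}]^{-1}$; this again runs through the bitangent structure of $x\mapsto\Ip(\sqrt x)$ (the optimal background $q$ matched to a tangent point), the same input used for Theorem~\ref{thm:count-ldp}. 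Granting it, $\phi_p(r)<\Ip(r)$. Finally every $W\in F^\star$ has $\Ip(W)=\phi_p(r)<\Ip(r)=\inf_{s\ge r}\Ip(s)$, so no constant graphon minimizes (a constant $s<r$ is infeasible; a constant $s\ge r$ costs at least $\Ip(r)$); as $F^\star$ and the set $\{W\equiv s:s\in[0,1]\}$ are disjoint compact subsets of the graphon space, their $\delta_\square$--distance is positive, and the exponential concentration on $F^\star$ forces $\inf_{s}\delta_\square(G_n,s)>\e$ with probability $1-e^{-Cn^2}$.
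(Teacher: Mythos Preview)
Your treatment of Part~(i) is essentially the paper's: bound $\lambda_1(W)\le\|W\|_2$, then use Jensen's inequality against the convex minorant of $x\mapsto\Ip(\sqrt x)$ to get $\Ip(W)\ge\Ip(r)$ with equality only at $W\equiv r$ (Lemma~\ref{lem:Ld/L1-Ip} and Lemma~\ref{lem:d2-boundary}). One omission: you assert cut-continuity of $\lambda_1$ and apply the LDP directly, but to pass from the LDP to an actual limit (and to the conditional concentration statement) one needs the ``nice graph parameter'' machinery of Theorem~\ref{thm:variational}, and in particular the nontrivial fact that $\|\cdot\|_\op$ is continuous in $\delta_\square$ and has no spurious local extrema (Lemma~\ref{lem:eigen-op}). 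This is not automatic and the paper proves it via $\|f\|_\op^4\le 4\|f\|_\square$.

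Your Part~(ii) has a real gap. The planted-clique graphon ($W=1$ on $S\times S$, $W=q$ elsewhere) does \emph{not} connect to the bitangent of $x\mapsto\Ip(\sqrt x)$: by Lemma~\ref{lem:d2-boundary} the tangent points are $\underline q$ and $1-\underline q$, never $1$, so your ``optimal background matched to a tangent point'' heuristic cannot apply. Worse, a direct expansion shows that for small $|S|=a$ one has $\lambda_1(W)=q+(1-q)a^2+O(a^3)$, so keeping $\lambda_1=r$ forces $q=r-(1-r)a^2+O(a^3)$ and then
\[
\Ip(W)-\Ip(r)=a^2\big(\Ip(1)-\Ip(r)-(1-r)\Ip'(r)\big)+O(a^3),
\]
which is \emph{strictly positive} by the strict convexity of $\Ip$. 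So the planted clique never beats the constant in the perturbative regime you seem to have in mind; and at the other extreme $(a,q)=(r,0)$ one checks that $\Ip(W)<\Ip(r)$ only when $\log\frac{p}{1-p}<h(r)/(r(1-r))$, which is strictly more restrictive than the true threshold (e.g.\ at $r=\tfrac12$ you get $-4\log 2$ versus the correct $-2$). Hence your construction does not cover the full symmetry-breaking region.

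The paper's construction is different and is exactly where the work lies: it reuses the three-interval perturbation $f_\e$ from Lemma~\ref{lem:break-count} (values $r$, $r_1$, $r_2$ with $r_1,r_2$ the bitangent points, on intervals of width $1-O(\e^2)$, $s\e^2$, $(1-s)\e^2+\e^3$) and then, in Lemma~\ref{lem:break-op}, exhibits an explicit step function $u$ with $(T_{f_\e}u)(x)>r\,u(x)$ for every $x$, forcing $\|f_\e\|_\op>r$. The crucial point is that this construction is a small $L^\infty$ perturbation of the constant $r$ whose value distribution sits on the bitangent, which is precisely what makes $\Ip(f_\e)<\Ip(r)$ possible while keeping $\lambda_1$ above $r$.
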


Both theorems are proved through an analysis of the graphon variational problems rising from the framework of Chatterjee and
Varadhan~\cite{CV11}. We show that throughout the replica symmetric region its unique solution is the symmetric one (a consequence of a generalized form of H\"older's inequality), whereas elsewhere one can construct graphons that outperform the symmetric candidate. Note that Theorem~\ref{thm:spectral-ldp} addresses spectral large deviations, whereas the framework of~\cite{CV11} was tailored for subgraph densities (the recent work~\cite{CV12} broadens it to general random matrix properties with respect to an appropriately defined spectral distance. Here we consider concretely large deviations in the spectral norm of random graphs). Fortunately, the results of~\cite{CV11} easily extend to a wide family of graph parameters with respect to the cut-metric,
including the operator norm, the extension of the (normalized) spectral norm to the space of graphons.
This generalization is detailed in~\S\ref{sec:CV}.

{
\begin{figure}
\setlength{\abovecaptionskip}{-5pt}
  \centering
  \begin{tikzpicture}[font=\footnotesize]

    \newcommand{\hsep}{5.6cm}
    \newcommand{\vsep}{5.8cm}
    \node (plot1) at (0,0) {
      \includegraphics[scale=.85]{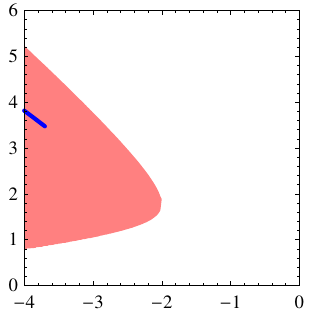}};
    \node (plot2) at (\hsep,0) {
      \includegraphics[scale=.85]{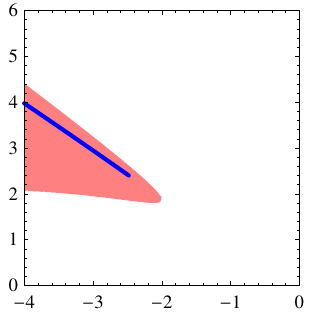}};
    \node (plot3) at (2*\hsep,0) {
      \includegraphics[scale=.85]{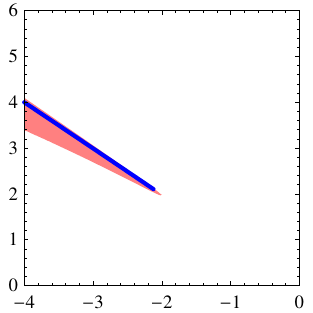}};
    \node (plot4) at (0,-\vsep) {
      \includegraphics[scale=.85]{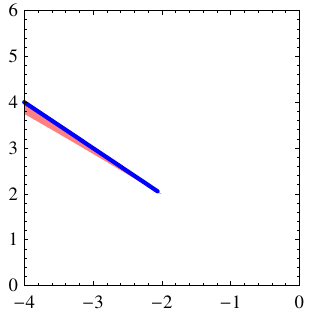}};
    \node (plot5) at (\hsep,-\vsep) {
      \includegraphics[scale=.85]{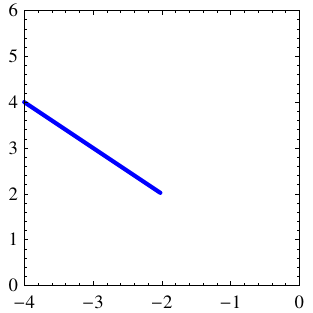}};
    \node (plot6) at (2*\hsep,-\vsep) {
      \includegraphics[scale=.85]{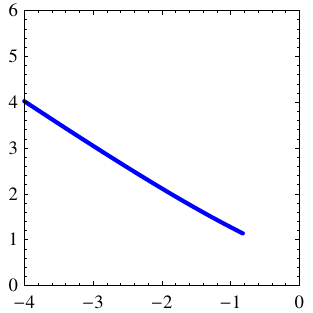}};

    \path[use as bounding box] ($(plot1.north west)+(-.5cm,1)$) rectangle
    ($(plot6.south east)+(0,-.5)$);

    \begin{scope}[shift={(plot1.south west)},x={(plot1.south east)},y={(plot1.north west)}]
      \node[below=-.5em] at (.5,0) {$\beta_1$};
      \node[left=-.7em] at (0,.5) {$\beta_2$};
      \node[above] at (.5,1) {$\alpha=0.5$};
      \draw[dashed] (.515,0.11)--(.515,.95);
      \draw (.8,.9) node[align=left,anchor=north east,fill=white,font=\tiny]
      {Broken symmetry in \\ the shaded region} edge[-latex] (.3,.5);
    \end{scope}

    \begin{scope}[shift={(plot2.south west)},x={(plot2.south east)},y={(plot2.north west)}]
      \node[below=-.5em] at (.5,0) {$\beta_1$};
      \node[left=-.7em] at (0,.5) {$\beta_2$};
      \node[above] at (.5,1) {$\alpha=0.6$};
      \draw[dashed] (.515,0.11)--(.515,.95);
      \node[align=left,font=\tiny,text width=6em] at (.8,.5) {
        Replica \\ symmetric \\
        for $\beta_1 \geq -2$.};         
    \end{scope}

    \begin{scope}[shift={(plot3.south west)},x={(plot3.south east)},y={(plot3.north west)}]
      \node[below=-.5em] at (.5,0) {$\beta_1$};
      \node[left=-.7em] at (0,.5) {$\beta_2$};
      \node[above] at (.5,1) {$\alpha=0.65$};
      \draw[dashed] (.515,0.11)--(.515,.95);
    \end{scope}

    \begin{scope}[shift={(plot4.south west)},x={(plot4.south east)},y={(plot4.north west)}]
      \node[below=-.5em] at (.5,0) {$\beta_1$};
      \node[left=-.7em] at (0,.5) {$\beta_2$};
      \node[above] at (.5,1) {$\alpha=0.66$};
          \draw[dashed] (.515,0.11)--(.515,.95);
    \end{scope}

    \begin{scope}[shift={(plot5.south west)},x={(plot5.south east)},y={(plot5.north west)}]
      \node[below=-.5em] at (.5,0) {$\beta_1$};
      \node[left=-.7em] at (0,.5) {$\beta_2$};
      \node[above] at (.5,1) {$\alpha=2/3$};
      \draw (.7,.8) node[align=left,font=\tiny,text width=9em]
      {Replica symmetric everywhere};
    \end{scope}

    \begin{scope}[shift={(plot6.south west)},x={(plot6.south east)},y={(plot6.north west)}]
      \node[below=-.5em] at (.5,0) {$\beta_1$};
      \node[left=-.7em] at (0,.5) {$\beta_2$};
      \node[above] at (.5,1) {$\alpha=1$};
      \draw (.9,.9) node[anchor=north east,align=left,font=\tiny,fill=white]
      {$u^*(\beta_1,\beta_2)$ is discontinuous \\across the curve $\Gamma$} edge[-latex] (.33,.52);
    \end{scope}
  \end{tikzpicture}
  \caption{
    The $(\beta_1,\beta_2)$-phase diagrams for the exponential
    random graph model in~\eqref{eq-exp-model-alpha} 
    with $\beta_2\geq 0$ and various values of $\alpha>0$,
    as a special case of Theorem~\ref{thm:exp}.
    When $\alpha < 2/3$, symmetry breaking occurs in the shaded region (at least)
    and replica symmetry occurs for $\beta_1 \geq -2$.
    When $\alpha \geq 2/3$, replica symmetry always occurs.}
  \label{fig:betaphase}
  \vspace{-0.2cm}
\end{figure}
}

\subsection{Exponential random graphs}
We now turn our attention to a different random graph model, the basic setting of which assigns a probability $p_\beta(G)$ to every graph $G$ on $n$ labeled vertices as a function of its edge density $t(K_2,G)$, its triangles density $t(K_3,G)$ and a weight vector $\beta=(\beta_1,\beta_2)$ for these two quantities.\footnote{The general model allows for an arbitrary (fixed) collection of subgraphs. While the majority of our arguments can be extended to the general setting, we
  focus on the two-term model for simplicity and clarity.} Namely, the graph $G$ appears with the following
probability:
\begin{equation}
  \label{eq-exp-model}
p_\beta(G) = \frac{1}{Z_n} \exp\paren{\binom{n}{2}(\beta_1 t(K_2, G) + \beta_2 t(K_3, G))}\,,
\end{equation}
where $Z_n$ is a normalizing factor (the partition function).
When $\beta_2 > 0$ the model
favors graphs with more triangles whereas triangles are discouraged for $\beta_2<0$. There is a rich literature on both flavors of the model, motivated in part by applications in social networking: the reader is referred to~\cite{HJ99,PN04,PN05,Strauss86} as well as~\cite{BBS11,CD} and the references therein.

As shown by Bhamidi, Bresler and Sly~\cite{BBS11} and Chatterjee and Diaconis~\cite{CD},
when $\beta_2 \geq 0$ and $n$ is large, a
typical random graph drawn from the distribution has a trivial structure --- essentially the
same one as an Erd\H{o}s-R\'enyi random graph with a suitable edge density. This
somewhat disappointing conclusion accounts for some of the practical
difficulties with statistical parameter estimation for such models. It
was further shown in \cite{CD} that if we allow $\beta_2$ to be sufficiently
negative, then the model does behave appreciably differentially from
an Erd\H{o}s-R\'enyi model. In this part of our work we focus on the case
$\beta_2 > 0$, and propose a natural generalization that will enable the model
to exhibit a nontrivial structure instead of the previously observed Erd\H{o}s-R\'enyi
behavior.

Consider the exponential random graph model which includes an additional exponent $\alpha>0$
in the exponent of the triangle density term:
\begin{equation}
  \label{eq-exp-model-alpha}
  p_{\alpha,\beta}(G) = \frac{1}{Z_n} \exp\paren{\binom{n}{2}(\beta_1 t(K_2, G) + \beta_2 t(K_3, G)^\alpha)}
\, .
\end{equation}
We will show that this model exhibits a symmetry breaking phase transition even when $\beta_2 > 0$.
When $\alpha \geq 2/3$, the generalized model features the Erd\H{o}s-R\'enyi behavior, similar to the
previously observed case of $\alpha = 1$. However, for $0< \alpha < 2/3$, there exist regions of values of
$(\beta_1, \beta_2)$ for which a typical random graph drawn from this
distribution has symmetry breaking. As was the case for Theorems~\ref{thm:count-ldp} and \ref{thm:spectral-ldp}, rather than just triangles we prove this result for any $d$-regular graph $H$.

\begin{theorem}
  \label{thm:exp}
    Let $H$ be a $d$-regular graph for some fixed $d\geq 2$ and fix $\beta_1 \in \R$ and $\beta_2,\alpha > 0$.
  Let be $G_n$ be an exponential random graph on $n$ labeled vertices
  with law
\begin{equation}
  \label{eq-exp-model-H-alpha}
  p_{\alpha,\beta}(G_n) = \frac{1}{Z_n} \exp\paren{\binom{n}{2}(\beta_1 t(K_2, G_n) + \beta_2 t(H, G_n)^\alpha)}
\, .
\end{equation}
  \begin{enumerate}[(a)]
  \item Suppose $\alpha \geq d/e(H)$. There exists a subset $\Gamma =
    \{(\beta_1, \varphi(\beta_1)) : \beta_1 < \log(e(H)\alpha - 1) -
    \tfrac{e(H)\alpha}{e(H)\alpha - 1}\}$ of $\R^2$ for some function $\varphi:\R\to\R$
  such that for every
    $(\beta_1, \beta_2) \in \R \x (0,\infty) \setminus \Gamma$
   there exists $0< u^* < 1$ so that $\delta_\square(G_n,u^*)\to 0$ almost surely as $n\to\infty$,
   and for every $(\beta_1,\beta_2)\in\Gamma$ there exist $0 < u^*_1 < u^*_2 <
    1$ such that $\min\{ \delta_\square(G_n, u^*_1) \;,\; \delta_\square(G_n, u^*_2)\}\to 0$ almost surely as $n\to\infty$.
  \item Suppose $0 < \alpha < d/e(H)$ and $\beta_1 \geq \log(d-1) -
    d/(d-1)$. Then there
    exists $0< u^* < 1$ such that for every $\e > 0$ there is a $C > 0$
    such that $\delta_\square(G_n, u^*)\to 0$ almost surely as $n\to\infty$.
  \item\label{it:thm-exp-c} Suppose $0 < \alpha < d/e(H)$ and $\beta_1 < \log(d-1) -
    d/(d-1)$.
    Then there exists an open interval of values $\beta_2 > 0$ with
    the property that there exist $\e, C > 0$ such that for all $n$,
    \[
    \P \paren{
    \inf\big\{  \delta_\square(G_n,s) : 0\leq s \leq 1\big\}  > \e} \geq 1 - e^{-C n^2} \, .
    \]
  \end{enumerate}
\end{theorem}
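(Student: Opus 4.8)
The plan is to analyse the graphon variational problem produced, for this exponential model, by the large deviation framework of Chatterjee--Varadhan in the form of Chatterjee--Diaconis. First I would invoke that framework (in the generality established in \S\ref{sec:CV}) to pass to graphons: letting $I$ denote the graphon rate function of the uniform reference measure (so $I(W)=\int_0^1\!\int_0^1 h_{1/2}(W(x,y))\,dx\,dy$, with $h_{1/2}$ as in \eqref{eq:Ip}, up to a harmless additive constant), one has $\binom{n}{2}^{-1}\log Z_n\to\psi$ where
\[
\psi \;=\; \sup_{W}\;\bigl(\beta_1 t(K_2,W)+\beta_2\,t(H,W)^\alpha-I(W)\bigr),
\]
and the empirical graphon of $G_n$ concentrates, in cut-distance and with probability $1-e^{-\Theta(n^2)}$, on the set $\widetilde{\mathcal F}$ of maximizers. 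Thus every clause of the theorem reduces to locating $\widetilde{\mathcal F}$: if every maximizer is a constant graphon $\equiv u^*$ (or one of finitely many such) we get $\delta_\square(G_n,u^*)\to0$ almost surely by Borel--Cantelli, whereas if $\widetilde{\mathcal F}$ stays at cut-distance $\ge\e$ from all constant graphons we get the symmetry-breaking conclusion together with its exponential bound.

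The core is a reduction of this infinite-dimensional problem to a scalar one, carried out exactly as in the proofs of Theorems~\ref{thm:count-ldp} and~\ref{thm:spectral-ldp}. Writing $u=\int W$ one has $\psi=\sup_{u\in[0,1]}\bigl(\beta_1 u-\Lambda(u)\bigr)$ with $\Lambda(u)=\inf\{I(W)-\beta_2\,t(H,W)^\alpha:\int W=u\}$, and the point is to evaluate $\Lambda$. Convexity of $I$ (Jensen) makes constant graphons cheap in the entropy term; the Kahn--Galvin--Tetali homomorphism inequality bounds $t(H,W)$ from above by a functional of the degree function of $W$, which lets one replace an arbitrary competitor by a two-block ``clique-plus-background'' graphon; and the remaining scalar optimization is governed, just as in Theorem~\ref{thm:count-ldp}, by the convex minorant of $x\mapsto I(x^{1/d})$, which decides for each $u$ whether the constant graphon attains $\Lambda(u)$. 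The threshold $\alpha=d/e(H)$ enters precisely here: since $x\mapsto x^{e(H)\alpha}=(x^{d})^{e(H)\alpha/d}$ has exponent $\ge1$ in the variable $x^{d}$ exactly when $\alpha\ge d/e(H)$, in that range the triangle reward is ``convex in the relevant variable'' and can never help a non-constant graphon, whereas for $\alpha<d/e(H)$ it can, and does so for small $u$.

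The three cases then become elementary calculus. (a) If $\alpha\ge d/e(H)$ then $\Lambda(u)=I(u)-\beta_2 u^{e(H)\alpha}$ for every $u$, so every maximizer is constant and it remains to study $g(u)=\beta_1 u+\beta_2 u^{e(H)\alpha}-I(u)$ on $[0,1]$: for $\beta_1$ above $\log(e(H)\alpha-1)-\tfrac{e(H)\alpha}{e(H)\alpha-1}$ this is strictly concave for every $\beta_2>0$, giving a unique $u^*$ depending continuously on $(\beta_1,\beta_2)$, while below that value a standard Maxwell-construction analysis of the one-parameter family $\{g(\cdot\,;\beta_2)\}_{\beta_2>0}$ produces a single value $\beta_2=\varphi(\beta_1)$ at which the two competing local maxima tie (this is $\Gamma$), the endpoint of $\Gamma$ being the degeneracy locus of the double well (found from $g'=g''=0$) and giving the stated formula. (b) If $0<\alpha<d/e(H)$ and $\beta_1\ge\log(d-1)-d/(d-1)$, observe that the corresponding edge density $e^{\beta_1}/(1+e^{\beta_1})$ is exactly the rightmost symmetry-breaking value $\tfrac{d-1}{d-1+e^{d/(d-1)}}$ from the remark following Theorem~\ref{thm:count-ldp}; since $\beta_2>0$ only pushes the optimal density higher, $\beta_1 u-\Lambda(u)$ is maximized at a $u$ in the range where (by the analysis behind Theorem~\ref{thm:count-ldp}) the constant graphon attains $\Lambda$, so the unique maximizer is again constant and $\delta_\square(G_n,u^*)\to0$ a.s. (c) If $0<\alpha<d/e(H)$ and $\beta_1<\log(d-1)-d/(d-1)$, the convex-minorant condition fails on an open sub-interval of the relevant $u$-range, so one plugs in the clique-plus-background graphon that witnesses symmetry breaking in Theorem~\ref{thm:count-ldp} and checks that it strictly beats $\sup_u g(u)$; because $\beta_2$ enters only through the $t(H,W)^\alpha$ term, for which this graphon has a strict advantage per unit of entropy cost, the inequality persists on an open interval of $\beta_2$, and the framework then yields $\inf_{0\le s\le 1}\delta_\square(G_n,s)>\e$ with probability $1-e^{-\Theta(n^2)}$.

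I expect the main obstacle to be the reduction step in the replica-symmetric cases (a) and (b): proving that no non-constant graphon, and in particular no clique-like one, can beat the best constant requires running the Kahn--Galvin--Tetali inequality and the precise shape of $I$ through a supremum-type variational problem carrying the extra exponent $\alpha$, rather than the constrained infimum already handled in Theorem~\ref{thm:count-ldp}; keeping the accounting of the per-unit-cost advantage of block graphons tight enough to match the stated thresholds is where the real work lies. By contrast the symmetry-breaking direction (c) is comparatively soft, since it only requires a single explicit competitor, promoted to an open parameter window by continuity.
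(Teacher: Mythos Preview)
Your high-level architecture---pass to the Chatterjee--Diaconis variational problem~\eqref{eq:exp-var}, show the maximizers are constant in (a) and (b), and beat every constant with an explicit graphon in (c)---is exactly the paper's. However, several of the load-bearing steps in your plan are either misattributed or incorrect. First, the inequality you need is not Kahn--Galvin--Tetali but the generalized H\"older bound $t(H,f)\le\|f\|_d^{e(H)}$ (Corollary~\ref{cor:reg-holder}); KGT goes in the other direction (it bounds $t(G,f)$ for $G$ bipartite $d$-regular by a power of $t(K_{d,d},f)$), requires bipartiteness, and says nothing about ``the degree function of $W$''. Second, your detour through $\Lambda(u)=\inf\{I(W)-\beta_2 t(H,W)^\alpha:\int W=u\}$ and the claim $\Lambda(u)=I(u)-\beta_2 u^{e(H)\alpha}$ for every $u$ is both unnecessary and almost certainly false (a clique graphon with $\int W=u$ can make $t(H,W)^\alpha$ arbitrarily larger than $u^{e(H)\alpha}$ relative to the entropy cost for suitable $u$ and large $\beta_2$). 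The paper never conditions on $\int W$: for (a) it chains $t(H,f)^\alpha\le\|f\|_d^{e(H)\alpha}\le\|f\|_{e(H)\alpha}^{e(H)\alpha}$ (the second step uses $d\le e(H)\alpha$) so that $\beta_2 t(H,f)^\alpha-I_p(f)\le\int(\beta_2 f^{e(H)\alpha}-I_p(f))$ becomes pointwise; for (b) it uses that $\beta_1\ge\log(d-1)-d/(d-1)$ is exactly $p\ge p_0(d)$, so $x\mapsto I_p(x^{1/d})$ is convex (Lemma~\ref{lem:hp-shape}) and Jensen gives $I_p(f)\ge I_p(\|f\|_d)$, which combined with $t(H,f)\le\|f\|_d^{e(H)}$ reduces to a one-variable problem in $\|f\|_d$.

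Third, your analysis of the scalar problem in (a) is wrong: you assert that $g(u)=\beta_1 u+\beta_2 u^{e(H)\alpha}-I(u)$ is strictly concave once $\beta_1$ exceeds the threshold, but $g''$ does not depend on $\beta_1$ at all. The correct mechanism is the paper's: after the substitution $p=(1+e^{-\beta_1})^{-1}$ the scalar problem is $\sup_u(\beta_2 u^{e(H)\alpha}-I_p(u))$, and via the change of variable $x=u^{e(H)\alpha}$ the number of maximizers is governed by whether $x\mapsto I_p(x^{1/(e(H)\alpha)})$ is convex, which by Lemma~\ref{lem:hp-shape} happens exactly when $p\ge p_0(e(H)\alpha)$, i.e.\ $\beta_1\ge\log(e(H)\alpha-1)-\tfrac{e(H)\alpha}{e(H)\alpha-1}$; below that threshold the curve has a single bitangent whose slope is the unique $\beta_2=\varphi(\beta_1)$ yielding two maximizers. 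Your treatment of (c) is essentially right: the paper uses the graphon from Lemma~\ref{lem:break-count} and the geometric fact (Lemma~\ref{lem:hp-d}) that $\cB_{e(H)\alpha}\subsetneq\cB_d$ when $e(H)\alpha<d$, so that for an explicit open interval of $\beta_2$ the scalar optimum $u^*$ lands in $\cB_d\setminus\cB_{e(H)\alpha}$ and the construction strictly beats it.
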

Note that, as in the previous theorems, for the replica symmetric phase one can quantify the rate of convergence, e.g., when $\delta_\square(G_n,u^*)\to 0 $ almost surely we in fact have that for any $\epsilon>0$ there exists some $C>0$ so that $\P(\delta_\square(G_n,u^*)\leq \epsilon) \geq 1-e^{-Cn^2}$ for every $n$.

\subsection{Linear hypergraphs}
Theorem~\ref{thm:linear-hypergraph-ldp} (see \S\ref{sec:hypergraphs}) extends Theorem~\ref{thm:count-ldp} to the setting of random hypergraphs. A $k$-uniform
hypergraph $G$ consists of a set $V(G)$ of vertices and a set $E(G)$
of hyperedges, where each hyperedges is a $k$-element subsets of
$V(G)$. It is said to be $d$-regular if every vertex is incident to exactly $d$ edges, and
\emph{linear} if every two vertices are incident to at most one common hyperedge (see Fig.~\ref{fig:linear-hyp} for examples of $d$-regular 3-uniform linear hypergraphs).
The random hypergraph $\cG^{(k)}(n,p)$ is formed by starting with
$n$ vertices and adding $k$-element subset of the vertices as a
hyperedge independently with probability $p$.

In order to generalize our arguments to large deviations in the density of $H$, an arbitrary $d$-regular linear hypergraph, one must first extend the theory developed by Chatterjee and Varadhan~\cite{CV11} to $k$-uniform hypergraphs.
Thanks to the linearity of the hypergraph $H$ there is a simple extension of Szemer\'edi's regularity lemma to hypergraphs that behaves well with respect to the density of $H$.

\begin{figure}
 \includegraphics[scale=.38]{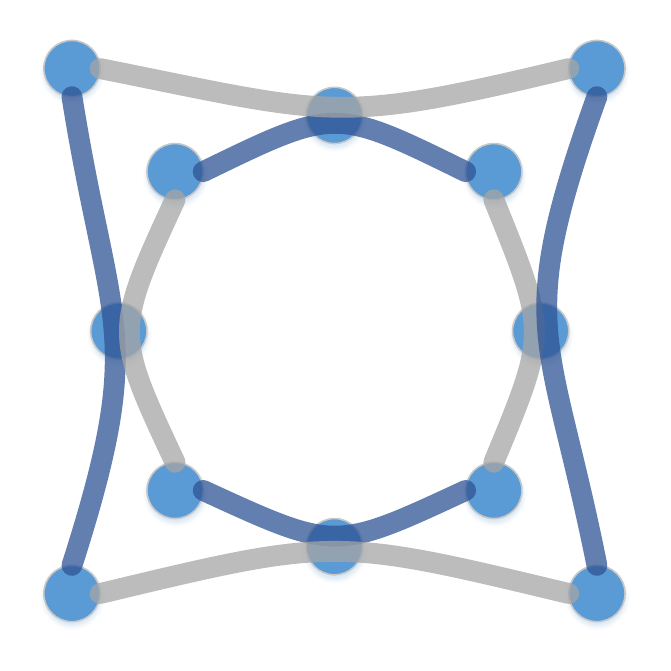}
 \hspace{1in}
 \includegraphics[scale=.4]{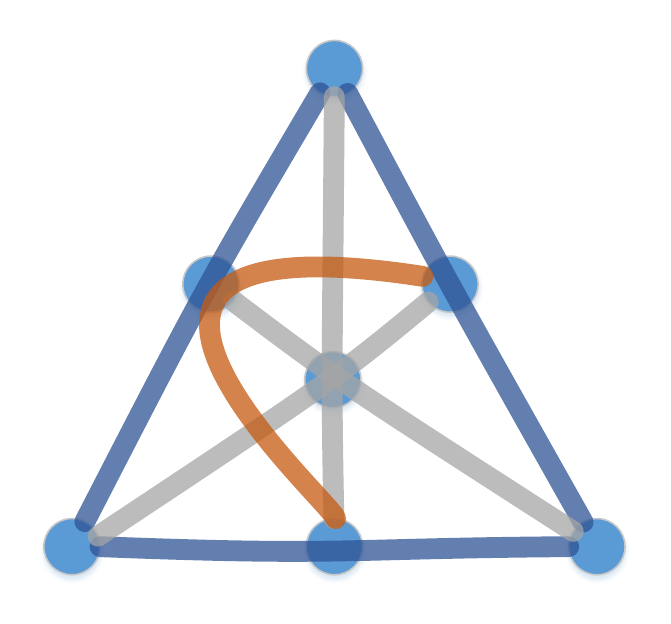}
  \caption{Linear $d$-regular hypergraphs: a cycle ($d=2$) and the Fano plane ($d=3$).}
  \label{fig:linear-hyp}
\end{figure}

\subsection{Graph homomorphisms}
Alon~\cite{Alon91} conjectured in 1991 that the number of independent sets in a $d$-regular graph $G$, denoted $i(G)$, satisfies $i(G) \leq i(K_{d,d})^{|V(G)|/(2d)}$, i.e., it is maximized when $G$
is a union of complete bipartite graphs $K_{d,d}$.
Kahn~\cite{Kahn01} verified this when $G$ is bipartite using an ingenious application of the entropy method
(specifically, Shearer's inequality).
This result was thereafter extended by the second author~\cite{Zhao10} to all $d$-regular graphs via an
elementary bijection. Using the entropy method of Kahn, Galvin
and Tetali~\cite{GT04} extended~\cite{Kahn01} to graph
homomorphisms:
\begin{theorem}[Galvin and Tetali~\cite{GT04}, following Kahn~\cite{Kahn01}]
  \label{thm:GT}
Let $G$ be a simple $d$-regular bipartite graph, and let $H$ be a graph, possibly containing
loops. We have
\begin{equation} \label{eq:GT-intro}
\hom(G, H) \leq \hom(K_{d,d}, H)^{\abs{V(G)}/(2d)}.
\end{equation}
\end{theorem}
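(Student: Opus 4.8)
The plan is to give a direct proof via a generalized Hölder inequality, bypassing the entropy/Shearer argument of Kahn and of Galvin--Tetali. Fix a bipartition $V(G) = L \sqcup R$; since $G$ is $d$-regular, $d\abs{L} = e(G) = d\abs{R}$, so $\abs{L} = \abs{R} = \abs{V(G)}/2$. Because $G$ is bipartite, specifying a homomorphism $\phi\colon V(G)\to V(H)$ is the same as specifying $\psi := \phi|_L\colon L \to V(H)$ arbitrarily and then, independently over $v\in R$, choosing $\phi(v)$ anywhere in the common $H$-neighborhood $\bigcap_{u\in N_G(v)} N_H(\psi(u))$. Writing $g_v(\psi) := \bigl\lvert\bigcap_{u\in N_G(v)} N_H(\psi(u))\bigr\rvert$ — which depends on $\psi$ only through its restriction to the $d$-set $N_G(v)$ — this reads
\[
\hom(G,H) \;=\; \sum_{\psi\colon L\to V(H)}\ \prod_{v\in R} g_v\bigl(\psi|_{N_G(v)}\bigr)\,.
\]
The same count applied to $K_{d,d}$ gives, for any fixed $v\in R$, the identity $\hom(K_{d,d},H) = \sum_{\sigma\colon N_G(v)\to V(H)} g_v(\sigma)^d$ (both sides equal the sum over $d$-tuples of vertices of $H$ of the $d$-th power of the common-neighborhood size).

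I would then apply the uniform-cover generalized Hölder inequality (Finner's inequality) to the displayed sum, viewing $\{N_G(v): v\in R\}$ as a family of subsets of the ground set $L$ with weight $1/d$ on each. Its hypothesis — that the weights of the sets through any given point sum to at least $1$ — holds here with equality precisely because $G$ is $d$-regular: every $u\in L$ lies in exactly $d$ of the sets $N_G(v)$. It yields
\[
\hom(G,H) \;\le\; \prod_{v\in R}\paren{\ \sum_{\sigma\colon N_G(v)\to V(H)} g_v(\sigma)^{d}}^{\!1/d} \;=\; \prod_{v\in R}\hom(K_{d,d},H)^{1/d} \;=\; \hom(K_{d,d},H)^{\abs{R}/d}\,,
\]
and $\abs{R} = \abs{V(G)}/2$ gives exactly the claimed inequality.

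Beyond the (routine) bookkeeping above, the one step to handle with care is the generalized Hölder inequality itself; I would either invoke Finner's inequality directly or, to keep the proof self-contained, derive the special case needed by peeling the vertices of $L$ off one at a time and applying ordinary Hölder with $d$ terms (each of exponent $d$) at each step, which is legitimate since the relevant exponents are the fixed rational $1/d$ and the cover is exactly $d$-fold. I expect no genuine obstacle here — this is the whole point of the argument being short. It is worth recording explicitly that bipartiteness is used only to decouple $\phi$ into the pair $(\phi|_L,\phi|_R)$; passing from bipartite $G$ to an arbitrary $d$-regular $G$ (not needed for the statement as given) is exactly where the bijective reduction to the bipartite double cover of~\cite{Zhao10} would enter.
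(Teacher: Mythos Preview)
Your proof is correct and is essentially the same argument as the paper's: both sum/integrate out the right side first to obtain a single function $g$ on $d$-tuples (the common-neighborhood count, or its graphon analogue), express $\hom(G,H)$ as a product over $R$ of $g$ evaluated on neighborhoods, apply the generalized H\"older/Finner inequality using $d$-regularity on the left, and identify $\norm{g}_d^d$ with $\hom(K_{d,d},H)$. The only cosmetic difference is that the paper works in the graphon formulation $t(G,f)$ with integrals over $[0,1]$ whereas you work directly with the discrete count; the structure is identical.
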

Observe that this inequality generalizes the independent set result, since $i(G) =
\hom(G, \tikzind)$.
Previously, all the known proofs of these inequalities relied on entropy techniques.
Regarding a more elementary proof, Kahn~\cite{Kahn01} wrote that
``one would think that this simple and natural conjecture... would have a simple and natural proof.''
As a related aside, in \S\ref{sec:hom} we give a short new entropy-free proof for \eqref{eq:GT-intro} as an
immediate consequence of the generalized form of H\"older's inequality.

\subsection{Organization}
In \S\ref{sec:CV} we review graph limits
as well as
 the large deviation principle for random graphs developed by
 Chatterjee and Varadhan. In \S\ref{sec:phase} we apply the
 machinery of Chatterjee and Varadhan to prove Theorems~\ref{thm:count-ldp} and~\ref{thm:spectral-ldp},
 determining the phase diagram for
 large deviations of
 subgraph densities and the largest eigenvalue in $\cG(n,p)$, resp.
 Section~\ref{sec:exp} focuses on exponential random graphs and gives the proof of Theorem~\ref{thm:exp}.
 In \S\ref{sec:hypergraphs} we extend Theorem~\ref{thm:count-ldp} to densities of
 linear hypergraphs in random hypergraphs. Section~\ref{sec:hom} contains the short new proof
 of the inequalities of Kahn and Galvin-Tetali (Theorem~\ref{thm:GT}). Finally, in \S\ref{sec:end} we discuss some
 open problems.

\section{Graph limits and the framework of Chatterjee-Varadhan}
\label{sec:CV}

The theory of Chatterjee-Varadhan reduces the problem of determining the
rate function for large deviations in dense random graphs to solving a prescribed variational
problem in graph limits.
We will review the required definitions from graph limit theory and then describe the results of~\cite{CV11} in the broader context of ``nice'' graph parameters, generalizing subgraph counts.

\subsection{Graph limits}\label{sec:graph-limits}

Let $\cW$ be the space of all bounded measurable functions
$[0,1]^2 \to \R$ that are symmetric (i.e., $f(x,y) = f(y,x)$ for all $x,y \in
[0,1]$). Further let $\cW_0$ denote all symmetric measurable functions
$[0,1]^2 \to [0,1]$, referred to as \emph{graphons} or \emph{kernels} (occasionally these are called \emph{labeled graphons} since later we consider
equivalence classes of $\cW_0$ modulo measure-preserving bijections on $[0,1]$).
Lov\'asz and Szegedy~\cite{LS06} showed that the elements of $\cW_0$ are limit objects for sequences of graphs
w.r.t.\ all subgraph densities. Specifically, for any $f \in
\cW$ and any simple graph $H$ with $V(H) = [m] = \set{1,2,\dots,m}$,
define
\[
t(H, f) = \int_{[0,1]^m} \prod_{(i,j) \in E(H)} f(x_i, x_j) \
dx_1 \cdots dx_m\,.
\]
(We shall omit the domain of integration when there is no ambiguity.)
Any simple graph $G$ on vertices $\set{1, 2, \dots, n}$ can be
represented as a graphon $f^G$ by
\[
f^G(x,y) = \begin{cases}
                 1 & \text{if $(\ceil{nx}, \ceil{ny})$ is an edge of $G$}, \\
                 0 & \text{otherwise.}
               \end{cases}
\]
In particular, $t(H, G) = t(H, f^G)$ for any two graphs $H$ and $G$.

A sequence of graphs $\set{G_n}_{n \geq 1}$ is said to converge if
the sequence of subgraph densities $t(H, G_n)$ converges for every fixed finite
simple graph $H$. It was shown in~\cite{LS06} that for any
such convergent graph sequence there is a limit object $f \in \cW_0$ such
that $t(H, G_n) \to t(H, f)$ for every fixed $H$. Conversely, any
$f \in \cW_0$ arises as a limit of a convergent graph sequence.

We will consider several norms on $\cW$, beginning with the
standard $L^p$ norm
\[
\norm{f}_p := \paren{\int \abs{f(x,y)}^p \ dxdy}^{1/p}\,.
\]
Each $f \in \cW$ can be viewed as a Hilbert-Schmidt kernel
operator $T_f$ on $L^2([0,1])$ by
\[
(T_f u)(x) = \int_0^1 f(x,y) u(y) \ dy \quad \mbox{for any $u \in L^2([0,1])$,}
\]
and the operator norm for $f$ is then given by
\[
\norm{f}_\op := \min\set{c \geq 0 : \norm{T_f u}_2 \leq c \norm{u}_2
  \text{ for all } u \in L^2([0,1])}\,.
\]
As $T_f$ is self-adjoint, its operator norm is equal to its spectral radius (see, e.g.,~\cite[Thm.~12.25]{Rudin}).

The cut-norm on $\cW$ is given by
\begin{align*}
\norm{f}_\square &:= \sup_{S, T \subset [0,1]} \abs{\int_{S \x T}
  f(x,y)  \ dxdy}
\\
&= \sup_{u,v \colon [0,1] \to [0,1]} \abs{\int_{[0,1]^2} f(x,y)u(x)v(y)
  \ dxdy},
\end{align*}
where the two suprema are equal
since one only needs to consider $\{0,1\}$-valued $u$ and $v$ by the
linearity of the integral. The second definition is useful for giving
upper bounds using the cut-norm.

For any measure-preserving map $\sigma \colon [0,1] \to [0,1]$ and $f
\in \cW$, define $f^\sigma \in \cW$ to be given by $f^\sigma(x,y) = f(\sigma (x), \sigma
(y))$. We define the cut-distance on $\cW$ by
\[
\delta_\square (f, g) = \inf_{\sigma} \norm{f - g^\sigma}_\square
\]
where $\sigma$ ranges over all measure-preserving bijections on
$[0,1]$.
For the case of graphons this gives a pseudometric space $(\cW_0,
\delta_\square)$, 
which can be turned into a genuine metric
space $\wt \cW_0$, equipped with the same cut-metric,
by taking a quotient w.r.t.\ the equivalence relation $f\sim g$ iff $\delta_\square(f,g) = 0$.
The following theorem can be viewed as a topological interpretation of Szemer\'edi's regularity
lemma.
\begin{theorem}[\cite{LS06}]
  The metric space $(\wt \cW_0, \delta_\square)$ is compact.
\end{theorem}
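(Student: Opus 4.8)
The plan is to prove that $(\wt\cW_0,\delta_\square)$ is sequentially compact; being a metric space, this is equivalent to compactness. The one substantial external input is a strengthened form of the weak (Frieze--Kannan) regularity lemma: there is an increasing sequence of integers $q_1<q_2<\cdots$ with $q_k\mid q_{k+1}$, depending only on $k$, such that every $W\in\cW_0$ admits nested equitable partitions $\mathcal P_1\prec\mathcal P_2\prec\cdots$ of $[0,1]$ with $\abs{\mathcal P_k}=q_k$ (each part of $\mathcal P_k$ being split into $q_{k+1}/q_k$ equal parts of $\mathcal P_{k+1}$) and $\norm{W-W_{\mathcal P_k}}_\square\le 1/k$, where $W_{\mathcal P}$ denotes the stepping of $W$, i.e.\ its conditional expectation on the $\sigma$-algebra generated by $\mathcal P\times\mathcal P$. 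This follows from the standard energy-increment argument together with routine equitable and nested refinements. Since $[0,1]$ equipped with such a tower of equipartitions is measure-isomorphic to $[0,1]$ equipped with the canonical tower $\mathcal D_k$ of $q_k$ consecutive intervals of length $1/q_k$, we may replace each $W$ by $W^\sigma$ for a suitable measure-preserving bijection $\sigma$ --- which does not change the point it represents in $\wt\cW_0$ --- and thereby assume $\mathcal P_k=\mathcal D_k$ for all $k$. Then $W_{\mathcal D_k}$ is encoded by a symmetric matrix $A(W,k)\in[0,1]^{q_k\times q_k}$, and nestedness yields the consistency relation that $A(W,k)$ is the block-average of $A(W,k+1)$.

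Now let $(W_n)\subseteq\wt\cW_0$ be an arbitrary sequence. Applying the above to each $W_n$ produces symmetric matrices $A_{n,k}\in[0,1]^{q_k\times q_k}$ with $\delta_\square(W_n,S_{n,k})\le 1/k$ for all $n,k$, where $S_{n,k}$ is the step graphon on $\mathcal D_k$ with block values $A_{n,k}$. For each fixed $k$ the matrices $\{A_{n,k}\}_{n\ge1}$ lie in the compact cube $[0,1]^{q_k\times q_k}$, so a diagonal extraction yields a subsequence $(n_j)$ along which $A_{n_j,k}\to B_k$ entrywise for every $k$ simultaneously. Passing to the limit in the consistency relation shows that $B_k$ is the block-average of $B_{k+1}$; equivalently, the step graphons $U_k$ (the step graphon on $\mathcal D_k$ with values $B_k$) form a bounded martingale with respect to the filtration generated by $\{\mathcal D_k\times\mathcal D_k\}$. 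By the martingale convergence theorem, $U_k\to W_\infty$ both almost everywhere and in $L^1([0,1]^2)$, where $W_\infty$ is symmetric and $[0,1]$-valued, hence $W_\infty\in\cW_0$; moreover $U_k=\mathbb E[\,W_\infty\mid\mathcal D_k\times\mathcal D_k\,]$, so $\norm{W_\infty-U_k}_1\to0$ as $k\to\infty$.

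It then remains to verify that $\delta_\square(W_{n_j},W_\infty)\to0$. Fix $k$. Since $\norm{S_{n_j,k}-U_k}_1=q_k^{-2}\norm{A_{n_j,k}-B_k}_1\to0$ as $j\to\infty$, the triangle inequality for $\delta_\square$ together with $\norm{\cdot}_\square\le\norm{\cdot}_1$ gives
\[
\limsup_{j\to\infty}\delta_\square(W_{n_j},W_\infty)\ \le\ \frac1k+\norm{U_k-W_\infty}_1 .
\]
Letting $k\to\infty$ forces the right-hand side to $0$, so $W_{n_j}\to W_\infty$ in $(\wt\cW_0,\delta_\square)$. This establishes sequential compactness and hence the theorem.

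The main obstacle is the preparatory step in the first paragraph: obtaining weak regularity in the precise ``nested equitable tower'' form and then using one measure-preserving bijection per graphon to align that tower with the canonical interval tower, so that the steppings become honest matrices obeying a martingale consistency relation --- this is exactly what makes the diagonal extraction and the martingale convergence theorem applicable. Once that is in place, the energy-increment proof of weak regularity and L\'evy's martingale convergence theorem enter only as black boxes, and all remaining estimates collapse to the elementary inequality $\norm{f}_\square\le\norm{f}_1$.
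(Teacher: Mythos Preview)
The paper does not give its own proof of this theorem; it is quoted as a known result from \cite{LS06} (see also \cite{LS07}) and used as a black box. So there is nothing in the paper to compare against directly.

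Your argument is correct and is essentially the Lov\'asz--Szegedy proof: weak regularity with a uniform nested equitable tower of partitions, a diagonal extraction on the finite-dimensional step data, and martingale convergence to produce the limit graphon. You have identified the one genuinely nontrivial ingredient --- the ``nested equitable tower'' form of weak regularity with sizes $q_k$ independent of $W$ --- and correctly flagged it as an external input; it does follow from the energy-increment argument with standard equitization and refinement bookkeeping. The alignment of the abstract tower with the canonical interval tower via a single measure-preserving bijection is fine (a.e.\ is enough, since $\delta_\square$ is insensitive to null sets), and your final three-term estimate using $\norm{\cdot}_\square\le\norm{\cdot}_1$ is exactly right.
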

It was shown in \cite{BCLSV08} that a sequence of graphs
$\set{G_n}_{n \geq 1}$ converges in the sense of subgraph densities if
and only if the sequence of graphons $f^{G_n} \in \cW_0$ converge in
$\cW_0$ w.r.t.\ the cut-distance. Equivalently, the topology on
$\cW_0$ induced by $\delta_\square$ is the weakest topology that is
continuous w.r.t.\ the subgraph densities $t(H, \cdot)$
for every $H$. This underlines one of the reasons making the cut-metric
topology a natural choice for the space of graphons.

\subsection{Graph parameters in the cut-metric topology}

We shall focus on graph parameters whose extensions to the space of graphons
behave well under the cut-metric topology. One example of such a graph parameter is the
subgraph density $t(H, \cdot)$ for an arbitrary finite simple graph
$H$, which was defined in \S\ref{sec:graph-limits} directly on the full space of
graphons such that $t(H,G)=t(H,f^G)$ for any graph $G$.
A crucial feature of $t(H,
\cdot)$ is being continuous w.r.t.\ the cut-metric (\cite{BCLSV08}),
related to the existence of
a ``counting lemma'' in the regularity lemma literature. This will
be a prerequisite for applying the large deviations machinery of Chatterjee and Varadhan.

\begin{definition}\label{def:nice}
  A \emph{nice graph parameter} is a function $\tau \colon \wt\cW_0 \to \R$
  that is continuous w.r.t.\ $\delta_\square$ and such
  that every local extrema of $\tau$ w.r.t.\ $L^\infty(\cW_0)$ is necessarily a global extrema.
  We extend such a function $\tau$ to $\cW_0$ in the obvious manner and further write $\tau(G) = \tau(f^G)$ for any graph $G$.
\end{definition}

Another way to state the local extrema condition is that if $f \in \cW_0$ is not a global
maximum (resp.~minimum) of $\tau$ then for every $\epsilon > 0$ there
exists $g \in \cW_0$ with $\norm{f-g}_\infty < \epsilon$ and
$\tau(g) > \tau(f)$ (resp.~$\tau(g) < \tau(f)$). This
technical condition will later imply the continuity of the rate
function.

Since the metric space $(\wt \cW_0,\delta_\square)$ is compact and path-connected, the image of $\tau$ as above is
a finite closed interval. In particular, its maximum is attained by
a non-empty closed subset of $\wt \cW_0$.

\begin{example}[Subgraph density]
  \label{ex:subgraph}
  For any fixed finite simple graph $H$, the subgraph density $t(H,
  \cdot)$ is a nice graph parameter. As mentioned above,
  $t(H,\cdot)$ is continuous w.r.t.\ $\delta_\square$ and in fact the map
  $f\mapsto t(H,f)$ is Lipschitz-continuous in the metric $\delta_\square$
  (\cite[Theorem~3.7]{BCLSV08}).
  The local extrema condition is fulfilled since the function $g^+=\min\set{f + \e, 1}$ satisfies $t(H, g^+) > t(H,f)$ unless $t(H,f)=1$, and
  similarly $g^-=(1-\epsilon)f$ has $t(H, g^-) < t(H,
  f)$ unless $t(H, f) = 0$.
\end{example}

The next two examples are of graph parameters that do not meet the criteria of Definition~\ref{def:nice}.
\begin{example}[Frobenius norm]
Let $\tau$ be the function that maps a weighted graph $G$ on $n$ vertices with adjacency matrix $A_G$ to the normalized Frobenius norm $\norm{A_G}_\textrm{F}/n$.
Then $\tau$ is discontinuous in $(\wt \cW_0,\delta_\square)$ and therefore is not nice.
Indeed, fix $0 < p < 1$ and let $G_n \sim \cG(n,p)$.
The sequence $G_n$ is known to converge in $\wt\cW_0$ almost surely to the constant graphon $p$ (see~\cite[Theorem~4.5]{BCLSV08}), whose Frobenius norm is $p$.
   In contrast to this limiting value, $\norm{A_{G_n}}_\textrm{F}/n = \norm{f^{G_n}}_2\to \sqrt{p}$ almost surely.
\end{example}

\begin{example}[Max-cut]
  The function $\tau(G) = \mathrm{maxcut}(G)/\abs{V(G)}^2$
  extends to
  $\wt\cW_0$ via \[ \tau(f) = \sup_{U \subset [0,1]} \int_{U \x
    ([0,1] \setminus U)} f(x,y) \ dxdy\,.\]
  Despite being continuous w.r.t.\ $\delta_\square$ as well as monotone,
  the max-cut density is not nice.
  The continuity of $\tau$ follows from the fact $\abs{\tau(f) - \tau(g)} \leq
  \delta_\square(f,g)$, as any cut for
  either $f$ or $g$ translates into a cut for the other with value
  differing by at most $\delta_\square(f,g)$. To see that $\tau$ does not satisfy the local maxima
  condition, let $f$ be the graphon defined to be $1$ on $[0,\frac13] \x
  [\frac13,1] \cup [\frac13,1] \x [0,\frac13]$ and 0 elsewhere. We have $\tau(f) =
  \frac29$, induced by $U = [0,\frac13]$. This is not the global maximum for
  $\tau$, which is $\frac14$ for the constant function $1$. However, we
  claim that $f$ is a local maximizer of $\tau$ with respect to the
  $L^\infty$-topology. By monotonicity,
  showing that $\tau(g) = \frac29$ for the function $g = \min\set{f+ \frac12,1}$
  will imply that $\tau(f_0) \leq \tau(g) =
  \tau(f)$ for any $f_0$ with $\norm{f_0 - f}_\infty \leq \frac12$.
  Indeed, if $\mu(U \cap[0,\frac13]) = a$ and $\mu(U
  \cap [\frac13,1]) = b$, where $\mu$ is Lebesgue measure, then the
  cut density induced by $U$ for $g$ is equal to $\frac{1}{2}a(\frac13
  - a) + \frac12 b(\frac23 - b) + a(\frac23 - b) + b(\frac13 - a)$,
  which is maximized 
  at $(a,b) = (0,\frac23)$ and $(a,b)=(\frac13,0)$.
\end{example}

We will see in \S\ref{sec:phase-eigen} (see Lemma~\ref{lem:eigen-op}) that, in contrast to the Frobenius norm,
the spectral norm (the focus of Theorem~\ref{thm:spectral-ldp}) does behave well under the cut-metric topology, thus qualifying for an application of the large deviation theory of Chatterjee and Varadhan.

\subsection{Large deviations for random graphs}

A random graph $G_n \sim \cG(n,p)$ corresponds to
the random point $f^{G_n} \in  \wt \cW_0$, thus $\cG(n,p)$ induces a
probability distribution $\P_{n,p}$ on $\wt \cW_0$ supported on a
finite set of points (graphs on $n$ vertices).
Recalling~\eqref{eq:Ip}, we extend $\Ip \colon [0,1] \to \R$
to $\cW_0$ by
\[
\Ip(f) := \int_{[0,1]^2} \Ip(f(x,y)) \ dxdy \qquad \mbox{for any $f \in \cW_0$.}
\]
An important feature of $\Ip$ is that it is a
convex function on $[0,1]$ and hence lower-semicontinuous on
$\wt \cW_0$ with respect to the cut-metric topology (\cite[Lem.~2.1]{CV11}).

Using Szemer\'edi's regularity lemma as well as tools from graph limits, Chatterjee and Varadhan~\cite{CV11} proved the following large
deviation principle for random graphs.

\begin{theorem}[\cite{CV11}]
  \label{thm:ldp}
For each fixed $p \in (0,1)$, the sequence $\P_{n,p}$ obeys a
large deviation principle in the space $(\wt \cW_0, \delta_\square)$ with rate function $\Ip$. Explicitly, for any closed set $F \subseteq \wt \cW_0$,
\[
\limsup_{n \to \infty} \frac{1}{\binom{n}{2}} \log \P_{n,p}(
F) \leq - \inf_{f \in F} \Ip(f)\,,
\]
and for any open $U \subseteq \wt \cW_0$,
\[
\liminf_{n \to \infty} \frac{1}{\binom{n}{2}} \log \P_{n,p}(
U) \geq - \inf_{f \in U} \Ip(f)\,.
\]
\end{theorem}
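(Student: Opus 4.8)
\emph{Proof sketch.} The plan is to establish the \emph{weak} large deviation principle for $\P_{n,p}$ on $(\wt\cW_0,\delta_\square)$ and then upgrade it to the stated open/closed form using compactness. Since $(\wt\cW_0,\delta_\square)$ is compact, exponential tightness is automatic, so it suffices to prove that for every graphon $f\in\wt\cW_0$
\[
-\Ip(f)\;\le\;\liminf_{\delta\to0}\liminf_{n\to\infty}\frac1{\binom n2}\log\P_{n,p}\bigl(B_\delta(f)\bigr)\;\le\;\limsup_{\delta\to0}\limsup_{n\to\infty}\frac1{\binom n2}\log\P_{n,p}\bigl(B_\delta(f)\bigr)\;\le\;-\Ip(f);
\]
the open lower bound then follows by taking $f\in U$ and a ball around it, and the closed upper bound by covering the compact set $F$ with finitely many small balls. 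Note $\Ip$ takes only finite values on $[0,1]$ (as $p\in(0,1)$), so there is no separate ``$\Ip(f)=\infty$'' case. The engine for both inequalities is a single Chernoff/Stirling estimate for the probability that $\cG(n,p)$ lands near a prescribed \emph{step graphon}.

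For the lower bound, fix a step graphon $w$: a partition of $[0,1]$ into intervals $I_1,\dots,I_k$ of lengths $\lambda_1,\dots,\lambda_k$ together with values $\beta_{ij}\in[0,1]$ on $I_i\x I_j$. Split $[n]$ into blocks $V_1,\dots,V_k$ with $|V_i|=(\lambda_i+o(1))n$, and let $\cE_n$ be the event that, for every $i\le j$, the number of edges of $G_n$ between $V_i$ and $V_j$ is within $o(n^2)$ of $\beta_{ij}$ times the number of available such pairs. This is an intersection of independent binomial events, so Stirling's formula yields
\[
\frac1{\binom n2}\log\P_{n,p}(\cE_n)\;=\;-\bigl(1+o(1)\bigr)\sum_{i\le j}\frac{\#\{\text{pairs between }V_i,V_j\}}{\binom n2}\,\Ip(\beta_{ij})\;=\;-\bigl(1+o(1)\bigr)\,\Ip(w).
\]
On $\cE_n$ one has $\delta_\square(f^{G_n},w)\to0$ (the cut norm only sees block averages up to $o(1)$, and these are pinned by $\cE_n$), so $f^{G_n}\in B_\delta(w)$ eventually; this gives the lower bound at every step graphon. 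For a general $f$, let $w_m$ be the conditional expectation of $f$ onto the $m\x m$ dyadic grid; then $\delta_\square(w_m,f)\to0$, while $w_m\to f$ a.e.\ by the martingale convergence theorem, so $\Ip(w_m)\to\Ip(f)$ by dominated convergence (as $\Ip$ is bounded and continuous on $[0,1]$). Applying the step-graphon bound to $w_m$ and letting $m\to\infty$ gives the lower bound at $f$.

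For the upper bound we invoke Szemer\'edi's regularity lemma in its graphon form: for every $\delta>0$ there is a constant $k=k(\delta)$, \emph{independent of $n$}, such that every graph $G_n$ admits a vertex partition into at most $k$ parts whose associated step graphon $\hat f^{G_n}$ satisfies $\delta_\square(f^{G_n},\hat f^{G_n})<\delta$; relabeling vertices, we may take the parts to be intervals. Hence $\P_{n,p}(B_\delta(f))\le\P_{n,p}\bigl(\delta_\square(\hat f^{G_n},f)<2\delta\bigr)$. There are at most $k^n=e^{o(n^2)}$ partitions of $[n]$ into $k$ parts, and after rounding the block densities to a grid of mesh $\eta$ there are only $O_\eta(1)$ density profiles; for each fixed partition and profile the probability is at most $\exp\!\bigl(-(1+o(1))\binom n2\Ip(\hat f^{G_n})\bigr)$ by the same Stirling estimate. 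A union bound over these $e^{o(n^2)}$ cases therefore gives
\[
\limsup_{n\to\infty}\frac1{\binom n2}\log\P_{n,p}\bigl(B_\delta(f)\bigr)\;\le\;-\inf\bigl\{\Ip(g)\;:\;g\in\wt\cW_0,\ \delta_\square(g,f)\le 2\delta\bigr\};
\]
letting $\delta\to0$ and using that $\Ip$ is lower semicontinuous on $(\wt\cW_0,\delta_\square)$ (as quoted above) yields $\limsup_\delta\limsup_n(\cdot)\le-\Ip(f)$, as required.

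The main obstacle is making the regularity step compatible with the scale $\binom n2$: the crucial point is that $k(\delta)$ does not grow with $n$, so the union bound over the $k^n$ partitions costs only a factor $e^{o(n^2)}$ and is absorbed --- this is precisely why one must appeal to the regularity lemma rather than to any crude count of graphs. Along the way two routine but genuine points require care: that the block-density identity $\frac1{\binom n2}\sum_{i\le j}\#\{\text{pairs}_{ij}\}\,\Ip(\beta_{ij})=\Ip(\hat f^{G_n})+o(1)$ holds uniformly, correctly accounting for the diagonal blocks (with $\binom{|V_i|}2$ rather than $|V_i|^2$ pairs) and for unequal part sizes; and that the cut-distance control survives the rounding of densities. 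Everything else is bookkeeping around the two-sided binomial tail estimate together with the soft facts already on record: compactness of $(\wt\cW_0,\delta_\square)$ and lower semicontinuity of $\Ip$.
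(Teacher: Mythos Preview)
The paper does not prove Theorem~\ref{thm:ldp}; it is quoted verbatim from Chatterjee--Varadhan~\cite{CV11} and used as a black box (the surrounding text says only that it was proved ``using Szemer\'edi's regularity lemma as well as tools from graph limits''). So there is no in-paper proof to compare against.

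That said, your sketch is a faithful outline of the Chatterjee--Varadhan argument itself: weak LDP on balls via a two-sided Stirling/binomial estimate for step graphons, the lower bound by approximating an arbitrary $f$ by block averages (with $\Ip(w_m)\to\Ip(f)$ via martingale/dominated convergence), the upper bound via the weak regularity lemma with a bounded number $k(\delta)$ of parts so that the union over $k^n$ partitions and $O_\eta(1)$ rounded profiles costs only $e^{o(n^2)}$, and finally the upgrade to the full LDP by compactness of $(\wt\cW_0,\delta_\square)$ together with lower semicontinuity of $\Ip$. One small wording slip: in the upper bound you write ``the probability is at most $\exp(-(1+o(1))\binom n2\Ip(\hat f^{G_n}))$'' with $\hat f^{G_n}$ still random; what you mean is that for each \emph{fixed} partition $\pi$ and rounded profile $w$, the event that the block densities of $G_n$ under $\pi$ round to $w$ has probability at most $\exp(-(1+o(1))\binom n2\Ip(w))$, and every $G_n\in B_\delta(f)$ lands in at least one such event with $\delta_\square(w,f)\le 2\delta+O(\eta)$. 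With that clarification the sketch is sound.
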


The machinery developed by Chatterjee and Varadhan reduces the problem
determining the large deviation rate function for dense random graphs
to solving a variational problem on graphons.
For any nice graph parameter $\tau \colon \cW_0\to\R$, any $p \in [0,1]$, and any $t \in
\tau(\cW_0)$, let
\begin{equation}
\label{eq:variational}
\phi_\tau (p,t) := \inf\set{\Ip(f) \colon f \in \cW_0~,~ \tau(f) \geq t}\,.
\end{equation}
Since $\Ip$ is lower-semicontinuous on $\wt \cW_0$, the infimum in~\eqref{eq:variational} is always attained.

The following result was stated in~\cite[Thm~4.1 and Prop.~4.2]{CV11} for the graph parameter $\tau = t(K_3,
\cdot)$. We state its generalization to the class of nice graph parameters as per Definition~\ref{def:nice}.

\begin{theorem}[Variational problem]
  \label{thm:variational}
Let $\tau\colon \cW_0\to\R$ be a nice graph parameter and $G_n\sim \cG(n,p)$. Fix $p \in (0,1)$ and $t < \max(\tau)$.
Let $\phi_\tau(p,t)$ denote the solution to~\eqref{eq:variational}.
Then
\begin{align}
\lim_{n \to \infty} \frac{1}{\binom{n}{2}} \log \P\paren{\tau(G_n)
  \geq t} &= - \phi_\tau(p,t) \,.
\end{align}
Let $F^*$ be the set of minimizers for~\eqref{eq:variational} and let $\wt F^*$ be its image in $\wt\cW_0$.
Then $\wt F^*$ is a non-empty compact subset of $\wt \cW_0$. Moreover, for each $\epsilon >
0$ there exists $C = C(\tau, \epsilon, p, t)>0$ so
that for all $n$,
  \[
  \P\cond{\delta_\square (G_n, \wt F^*) < \epsilon}{\tau(G_n)
    \geq t} \geq 1 - e^{-C 
    n^2}\,.
  \]
  In particular, if $\wt F^* = \{ f^*\}$ for some $f^* \in \wt\cW_0$ then
  the conditional distribution of $G_n$ given the event $\tau(G_n) \geq t$
  converges to the point mass at $f^*$ as $n \to \infty$.
\end{theorem}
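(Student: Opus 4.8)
The plan is to deduce everything from the large deviation principle of Theorem~\ref{thm:ldp}, the key structural fact being that the ``nice'' hypothesis on $\tau$ lets one pass between the closed feasible set and its interior without changing the value of $\inf\Ip$. Throughout, write $A := \{f \in \wt\cW_0 : \tau(f) \ge t\}$ and $U := \{f \in \wt\cW_0 : \tau(f) > t\}$. Since $\tau$ is continuous in the cut-metric, $A$ is closed, $U$ is open, $U \subseteq A$, and $U \neq \emptyset$ because $t < \max\tau$. As $(\wt\cW_0,\delta_\square)$ is compact and $\Ip$ is lower-semicontinuous on it, the infimum $\phi_\tau(p,t) = \inf_A \Ip$ is attained; hence $F^*$, and its image $\wt F^*$, are non-empty. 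Moreover $\Ip \ge \phi_\tau(p,t)$ everywhere on $A$, so $\wt F^* = A \cap \{f : \Ip(f) \le \phi_\tau(p,t)\}$ is an intersection of two closed sets, hence closed, hence compact.

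For the rate identity, apply the upper bound in Theorem~\ref{thm:ldp} to the closed set $A$ to get $\limsup_n \binom{n}{2}^{-1}\log\P(\tau(G_n)\ge t) \le -\phi_\tau(p,t)$. For the matching lower bound, apply the lower bound in Theorem~\ref{thm:ldp} to the open set $U$ and use $\P(\tau(G_n)\ge t)\ge\P(\tau(G_n)>t)$ to obtain $\liminf_n \binom{n}{2}^{-1}\log\P(\tau(G_n)\ge t) \ge -\inf_U\Ip$. It therefore suffices to prove $\inf_U\Ip = \phi_\tau(p,t)$, and this is the one place the niceness of $\tau$ enters: let $f^* \in F^*$ minimize $\Ip$ over $A$. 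If $\tau(f^*)>t$ then $f^*\in U$ and we are done. Otherwise $\tau(f^*)=t<\max\tau$, so $f^*$ is not a global maximum of $\tau$, hence by Definition~\ref{def:nice} not an $L^\infty$-local maximum; thus for every $\delta>0$ there is $g\in\cW_0$ with $\norm{f^*-g}_\infty<\delta$ and $\tau(g)>t$, i.e.\ $g\in U$. Since $\Ip$ is uniformly continuous on $[0,1]$, $\norm{f^*-g}_\infty<\delta$ forces $\abs{\Ip(g)-\Ip(f^*)}$ to be small, whence $\inf_U\Ip\le\phi_\tau(p,t)$; the reverse inequality is immediate from $U\subseteq A$, and combining the two bounds gives the limit $-\phi_\tau(p,t)$.

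For the concentration estimate, fix $\epsilon>0$ and set $B_\epsilon := \{f\in A : \delta_\square(f,\wt F^*)\ge\epsilon\}$, a closed and hence compact subset of $\wt\cW_0$. By lower-semicontinuity $\inf_{B_\epsilon}\Ip$ is attained, and it must strictly exceed $\phi_\tau(p,t)$: otherwise a minimizer would lie in $F^*$ yet be at positive $\delta_\square$-distance from $\wt F^*$. Write $\inf_{B_\epsilon}\Ip = \phi_\tau(p,t)+\eta$ with $\eta=\eta(\epsilon)>0$. The upper bound in Theorem~\ref{thm:ldp} gives $\P(f^{G_n}\in B_\epsilon)\le\exp(-\binom{n}{2}(\phi_\tau(p,t)+\eta/2))$ for all large $n$, while the rate identity gives $\P(\tau(G_n)\ge t)\ge\exp(-\binom{n}{2}(\phi_\tau(p,t)+\eta/4))$ for all large $n$; dividing, $\P\cond{\delta_\square(G_n,\wt F^*)\ge\epsilon}{\tau(G_n)\ge t}\le\exp(-\tfrac{\eta}{4}\binom{n}{2})\le e^{-Cn^2}$ for all large $n$ with $C=\eta/12$, and the finitely many remaining values of $n$ are absorbed by decreasing $C$. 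The ``in particular'' is now immediate: when $\wt F^*=\{f^*\}$ the conditional law of $f^{G_n}$ eventually charges every $\delta_\square$-ball about $f^*$ with probability tending to $1$, i.e.\ converges weakly to the point mass at $f^*$.

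The only substantive obstacle is the equality $\inf_U\Ip = \inf_A\Ip$ used in the lower bound. Without the niceness hypothesis the unconstrained minimizer of $\Ip$ over $A$ could be an isolated $L^\infty$-local maximum of $\tau$ lying on the boundary $\{\tau=t\}$, so that no nearby graphon is strictly feasible and $\inf_U\Ip$ could jump above $\phi_\tau(p,t)$ --- precisely the pathology the max-cut parameter exhibits. Everything else is routine compactness and bookkeeping with the LDP bounds.
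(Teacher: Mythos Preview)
Your proof is correct and follows essentially the same route as the paper: the paper isolates the key step as Lemma~\ref{lem:tau-cont} (continuity of $t\mapsto\phi_\tau(p,t)$), whose right-continuity argument is identical to your inline proof that $\inf_U\Ip=\inf_A\Ip$ via the $L^\infty$-local-extrema condition, and then defers the remaining LDP bookkeeping to~\cite{CV11}, which you spell out directly. The only cosmetic difference is that the paper also records left-continuity of $\phi_\tau$, which is not actually needed for Theorem~\ref{thm:variational}.
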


Observe that by considering $-\tau$ (also a nice graph parameter) one
obtains the same result for lower tail deviations.
%
%
%
The intuition behind
the second part of Theorem~\ref{thm:variational} is
that the probability that $\delta_\square(G_n,\wt F^*) \geq \e$ conditioned on
$\tau(G_n) \geq t$ can be again computed using Theorem~\ref{thm:ldp} and
shown to be exponentially smaller than that of the probability of the
event $\tau(G_n) \geq t$.

The proof of Theorem~\ref{thm:variational} is a straightforward
extension of the arguments of \cite[Thm.~4.1]{CV11} to nice graph parameters. A technical
condition needed to complete the proof of Theorem~\ref{thm:variational} is given by the following lemma,
key to which are the attributes of a nice graph parameter.

\begin{lemma}
  \label{lem:tau-cont}
  Let $\tau\colon\cW_0\to\R$ be a nice graph parameter. For any
  $p \in (0,1)$, the map $t \mapsto \phi_\tau(p,t)$ is continuous
  on $\tau(\cW_0)$.
\end{lemma}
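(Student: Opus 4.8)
The plan is to establish continuity of $t \mapsto \phi_\tau(p,t)$ on $\tau(\cW_0)$ by proving monotonicity plus both one-sided continuity statements. First observe that $\phi_\tau(p,t)$ is nondecreasing in $t$: if $t \le t'$ then the feasible set $\{f : \tau(f) \ge t'\}$ is contained in $\{f : \tau(f) \ge t\}$, so the infimum over the smaller set is at least as large. A monotone function can only be discontinuous via a jump, so it suffices to rule out jumps from the left and from the right. Left-continuity is the easy direction and I would handle it first: fix $t$ in the interior of $\tau(\cW_0)$, let $f^*$ be a minimizer realizing $\phi_\tau(p,t)$ (it exists by lower-semicontinuity of $\Ip$ on the compact space $\wt\cW_0$, as noted after~\eqref{eq:variational}), and note that $f^*$ is feasible for every $t' \le t$, whence $\phi_\tau(p,t') \le \Ip(f^*) = \phi_\tau(p,t)$; combined with monotonicity this gives $\lim_{t'\uparrow t}\phi_\tau(p,t') = \phi_\tau(p,t)$.

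The substantive direction is right-continuity, and this is where the ``nice'' hypothesis — specifically the local-extrema-are-global condition of Definition~\ref{def:nice} — enters. Fix $t < \max(\tau)$ and let $f^*$ be a minimizer for $\phi_\tau(p,t)$, so $\tau(f^*) \ge t$. If in fact $\tau(f^*) > t$, then by continuity of $\tau$ in $\delta_\square$ the same $f^*$ remains feasible for all $t'$ slightly larger than $t$, giving $\phi_\tau(p,t') \le \Ip(f^*)$ and hence right-continuity at $t$ trivially. The remaining case is $\tau(f^*) = t$. Here I would use the local-extrema condition: since $t < \max(\tau)$, the minimizer $f^*$ is not a global maximum of $\tau$, so for every $\epsilon > 0$ there exists $g \in \cW_0$ with $\norm{f^* - g}_\infty < \epsilon$ and $\tau(g) > \tau(f^*) = t$. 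Set $t_\epsilon := \tau(g) > t$; then $g$ is feasible for $\phi_\tau(p, t_\epsilon)$, so
\[
\phi_\tau(p, t_\epsilon) \le \Ip(g) = \int \Ip(g(x,y))\,dx\,dy.
\]
Since $\Ip$ is continuous on $[0,1]$ (indeed uniformly continuous on any $[\delta, 1-\delta]$, and one must take slight care near the endpoints $0,1$ where $\Ip$ is still continuous with finite values), the pointwise bound $\abs{g(x,y) - f^*(x,y)} < \epsilon$ forces $\Ip(g) \to \Ip(f^*) = \phi_\tau(p,t)$ as $\epsilon \to 0$. Thus $\limsup_{\epsilon\to 0}\phi_\tau(p,t_\epsilon) \le \phi_\tau(p,t)$; since $t_\epsilon \downarrow t$ (passing to a subsequence if necessary) and $\phi_\tau(p,\cdot)$ is monotone, this yields $\lim_{t'\downarrow t}\phi_\tau(p,t') = \phi_\tau(p,t)$, completing the proof.

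I expect the main obstacle to be purely technical: controlling $\int \Ip(g) \to \int \Ip(f^*)$ uniformly, since $\Ip$ has infinite derivative at $0$ and $1$ (though it stays bounded there, with $\Ip(0) = -\log(1-p)$ and $\Ip(1) = -\log p$). The cleanest fix is to note $\Ip$ is bounded and continuous on all of $[0,1]$, so the integrand $\Ip(g(x,y))$ is uniformly bounded and converges pointwise to $\Ip(f^*(x,y))$ as $\epsilon\to 0$; dominated convergence then gives $\Ip(g)\to\Ip(f^*)$ directly, avoiding any endpoint subtlety. One should also double-check that the local-extrema perturbation $g$ can be taken in $\cW_0$ (it can, by the wording of Definition~\ref{def:nice}) and that the argument degenerates gracefully at the endpoints of $\tau(\cW_0)$ — at $t = \min(\tau)$ only right-continuity is needed (handled as above, now with $f^*$ a global minimizer, in which case the $\tau(f^*) = t$ branch still applies since $t < \max(\tau)$), and the point $t = \max(\tau)$ is excluded from consideration throughout Theorem~\ref{thm:variational}.
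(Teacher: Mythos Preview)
Your right-continuity argument is correct and essentially matches the paper's: take a minimizer $f^*$ at level $t$, use the local-extrema condition from Definition~\ref{def:nice} to perturb it in $L^\infty$ to some $g$ with $\tau(g)>t$, and control $\Ip(g)-\Ip(f^*)$ via the (uniform) continuity of $\Ip$ on $[0,1]$. The paper skips your case split, but this is cosmetic.

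Your left-continuity argument, however, is not a proof. You write that the minimizer $f^*$ at level $t$ is feasible for every $t'\le t$, whence $\phi_\tau(p,t')\le \Ip(f^*)=\phi_\tau(p,t)$, and then claim this together with monotonicity yields $\lim_{t'\uparrow t}\phi_\tau(p,t')=\phi_\tau(p,t)$. But the inequality $\phi_\tau(p,t')\le \phi_\tau(p,t)$ for $t'\le t$ \emph{is} monotonicity; you have only proved the trivial direction $\sup_{t'<t}\phi_\tau(p,t')\le \phi_\tau(p,t)$ twice. What is needed is the reverse inequality, and for that one must use compactness of $\wt\cW_0$ together with lower semicontinuity of $\Ip$ and continuity of $\tau$. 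Concretely: take $t_n\uparrow t$ and minimizers $f_n$ with $\tau(f_n)\ge t_n$ and $\Ip(f_n)=\phi_\tau(p,t_n)$; extract a $\delta_\square$-convergent subsequence $f_{n_k}\to f$; then $\tau(f)=\lim\tau(f_{n_k})\ge t$ and $\Ip(f)\le\liminf\Ip(f_{n_k})$, so $\phi_\tau(p,t)\le\Ip(f)\le\liminf\phi_\tau(p,t_{n_k})$. The paper packages this same idea more elegantly by showing that for every $a\in\R$ the sublevel set $\{t:\phi_\tau(p,t)\le a\}$ is closed, via the observation that $\{\tau(f):\Ip(f)\le a\}$ is the continuous image of a compact set.
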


\begin{proof}
Fix $0<p<1$. By definition, $t \mapsto \phi_\tau(p,t)$ is
non-decreasing. To prove left-continuity, consider $a \in \R$. Since $\Ip$ is
lower-semicontinuous on $\cW_0$, the set $\set{f : \Ip(f) \leq a}$ is closed
in $\cW_0$, thus also compact by the compactness of $\cW_0$. Since
$\tau$ is continuous on $\cW_0$, the set $\set{\tau(f) : \Ip(f) \leq a}$ is
compact and in particular closed. Note that the latter is precisely the pre-image of
$(-\infty, a]$ under the inverse of $t \mapsto \phi_\tau(p,t)$. As this pre-image is
closed for any $a \in \R$, left-continuity follows.

In order to prove right-continuity it suffices to show that for every
$t_0 \in \tau(\cW_0)$ with $t_0 < \max(\tau)$ and every $\epsilon >
0$
there exists some $f \in \cW_0$ such that $\Ip(f) <
\phi_\tau(p,t_0) + \e$ and $\tau(f) > t_0$. Indeed,
since $\Ip \colon [0,1] \to \R$ is uniformly continuous (for any
fixed $p$), there exists $\e' > 0$ so that $\abs{\Ip(x) - \Ip(y)} < \e$
whenever $\abs{x-y} < \e'$. Let $f_0$ be the
minimizer of the variational problem~\eqref{eq:variational} for $t =
t_0$. The local extrema condition in Definition~\ref{def:nice} now implies that
there is some $f \in \cW_0$ with $\tau(f) > t_0$ and $\norm{f -
  f_0}_\infty < \e'$. Hence, $\Ip(f) < \Ip(f_0) + \e =
\phi_\tau(p,t_0) + \e$, as desired.
\end{proof}

\section{The phase diagram for subgraph densities and the spectral radius} \label{sec:phase}

\subsection{Subgraph density} \label{sec:phase-subgraph}

In this section we prove Theorem~\ref{thm:count-ldp}, characterizing the
phase diagram of upper tails (replica symmetry vs.\ symmetry breaking) of
the density of a fixed $d$-regular subgraph.

Establishing the replica symmetric phase will hinge on a generalized form
of H\"older's inequality which appeared in~\cite{Fin92}. We include its short proof for
completeness.

\begin{theorem}[Generalized H\"older's inequality]
  \label{thm:gen-holder}
  Let $\mu_1,\ldots,\mu_n$ be probability measures on $\Omega_1,\ldots,\Omega_n$, resp., and
  let $\mu=\prod_{i=1}^n \mu_i$ be the product measure on $\Omega = \prod_{i=1}^n \Omega_i$.
  Let $A_1, \ldots, A_m$ be nonempty subsets of $[n] = \set{1, \dots, n}$ and
  write $\Omega_{A}=\prod_{\ell\in A}\Omega_\ell$ and $\mu_{A} = \prod_{\ell\in A}\mu_\ell$.
  Let $f_i \in L^{p_i}\left(\Omega_{A_i}, \mu_{A_i}\right)$ with $p_i\geq 1$
  for each $i\in[m]$ and suppose in addition that $\sum_{i: \ell\in A_i} (1/p_i) \leq 1$
  for each $ \ell\in [n]$. Then
 \[
  \int \prod_{i=1}^m \left|f_i\right| \ d\mu
  \leq \prod_{i=1}^m \left(\int \left|f_i\right|^{p_i} \ d\mu_{A_i}\right)^{1/p_i}\,.
 \]
 In particular, when $p_i=d$ for every $i\in[m]$ we have $\int \prod_{i=1}^m |f_i| \ d\mu \leq \prod \left(\int |f_i|^d \ d\mu_{A_i}\right)^{1/d}$.
\end{theorem}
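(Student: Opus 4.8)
The plan is to deduce the generalized H\"older inequality from the ordinary (finite) H\"older inequality by induction on $n$, the number of coordinates. The base case $n=1$ is immediate: every $A_i$ equals $\{1\}$, and the hypothesis $\sum_{i:1\in A_i}(1/p_i)\le 1$ is exactly the condition under which the classical H\"older inequality $\int\prod|f_i|\,d\mu_1\le\prod\bigl(\int|f_i|^{p_i}\,d\mu_1\bigr)^{1/p_i}$ holds (apply it iteratively, or note it is the standard multi-function H\"older with conjugate exponents summing to at most $1$, padding with a trivial factor if the sum is strictly less than $1$).

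For the inductive step I would single out the last coordinate $n$ and write $\Omega=\Omega'\times\Omega_n$ with $\Omega'=\prod_{i=1}^{n-1}\Omega_i$, using Fubini to integrate first over $\Omega_n$. Partition the index set $[m]$ into $J=\{i: n\in A_i\}$ and its complement. For $i\notin J$ the function $f_i$ does not depend on the $\Omega_n$-coordinate, so it pulls out of the inner integral. For the factors with $i\in J$, apply the classical H\"older inequality in the single variable $x_n$ with exponents $q_i\ge 1$ chosen so that $\sum_{i\in J}(1/q_i)\le 1$; the natural choice is $q_i=p_i$, which is legitimate precisely because $\sum_{i\in J}(1/p_i)=\sum_{i:n\in A_i}(1/p_i)\le 1$ by hypothesis. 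This yields, pointwise in $x'\in\Omega'$,
\[
\int_{\Omega_n}\prod_{i=1}^m|f_i|\,d\mu_n \;\le\; \Bigl(\prod_{i\notin J}|f_i|\Bigr)\prod_{i\in J}\Bigl(\int_{\Omega_n}|f_i|^{p_i}\,d\mu_n\Bigr)^{1/p_i}\,.
\]
Now integrate this over $\Omega'$. Set $g_i=|f_i|$ for $i\notin J$ and $g_i=\bigl(\int_{\Omega_n}|f_i|^{p_i}\,d\mu_n\bigr)^{1/p_i}$ for $i\in J$; each $g_i$ is a function on $\Omega_{B_i}$ where $B_i=A_i$ for $i\notin J$ and $B_i=A_i\setminus\{n\}$ for $i\in J$, all of which are (possibly empty) subsets of $[n-1]$. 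One must handle the edge case where $B_i=\emptyset$ (i.e.\ $A_i=\{n\}$): such $g_i$ is a constant and factors out harmlessly, so we may assume all surviving index sets are nonempty, discarding those terms from the product. Crucially, for each $\ell\in[n-1]$ the condition $\sum_{i:\ell\in B_i}(1/p_i)=\sum_{i:\ell\in A_i}(1/p_i)\le 1$ still holds, since removing coordinate $n$ does not affect which $A_i$ contain $\ell$. Hence the induction hypothesis applies on $\Omega'$ with the same exponents $p_i$, giving $\int_{\Omega'}\prod g_i\,d\mu'\le\prod\bigl(\int_{\Omega_{B_i}}g_i^{p_i}\,d\mu_{B_i}\bigr)^{1/p_i}$, and for $i\in J$ one has $\int_{\Omega_{B_i}}g_i^{p_i}\,d\mu_{B_i}=\int_{\Omega_{A_i}}|f_i|^{p_i}\,d\mu_{A_i}$ by Fubini. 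Combining the displays gives the claim.

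The main obstacle is purely bookkeeping rather than analytic: one must be careful that the exponent hypothesis $\sum_{i:\ell\in A_i}(1/p_i)\le 1$ is preserved at every stage, that the classical H\"older step in the single variable $x_n$ is invoked with a valid set of exponents (in general one needs the version allowing $\sum(1/q_i)\le 1$, obtained from the equality case by inserting a dummy constant function or by noting $\|h\|_{q}\le\mu(\Omega_n)^{1/q-1/q'}\|h\|_{q'}$ on finite measure spaces — though for the application $p_i=d$ with $m$ copies summing to $m/d$ one simply uses ordinary H\"older after possibly grouping), and that empty index sets are disposed of correctly. The final sentence about the case $p_i=d$ is an immediate specialization once the general statement is in hand: the hypothesis $\sum_{i:\ell\in A_i}(1/d)\le 1$ becomes ``each coordinate $\ell$ lies in at most $d$ of the sets $A_i$,'' which is exactly the combinatorial condition that will later be matched against $d$-regularity of the subgraph $H$.
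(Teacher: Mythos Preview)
Your proof is correct and follows essentially the same route as the paper's: induction on the number of coordinates, Fubini to isolate the last coordinate, classical H\"older in that single variable (the paper invokes Jensen's inequality rather than a dummy factor when $\sum_{i:n\in A_i}1/p_i<1$), and then the induction hypothesis on the remaining $n-1$ coordinates with the same exponents. Your explicit treatment of the edge case $A_i=\{n\}$ is a detail the paper glosses over, and your base case is $n=1$ rather than the paper's $n=0$, but the argument is otherwise identical.
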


\begin{proof}
  The proof carries by induction on $n$ with the trivial base case of $n=0$.
  By Fubini's theorem,
  \begin{align*}
  \int_{\Omega} \prod_{i=1}^m \abs{f_i} \ d\mu
  &= \int_{\Omega} \prod_{i : n \in A_i} \abs{f_i} \prod_{i : n
    \notin A_i} \abs{f_i} \ d\mu
  = \int_{\Omega_{[n-1]}} \bigg(\int_{\Omega_n} \prod_{i : n \in A_i}
    \abs{f_i} \ d \mu_n \bigg) \prod_{i : n
    \notin A_i} \abs{f_i}\ d\mu_{[n-1]}\,.\,
  \end{align*}
  where the argument of each $f_i$ is the restriction of $x\in\Omega$ to the coordinates of $A_i$,
  denoted by $x_{A_i}$.
H\"{o}lder's inequality (along with Jensen's inequality if $\sum_{i:n\in A_i} (1/p_i)$ is less than $1$) implies that
\[ \int_{\Omega_n} \prod_{i : n \in A_i}
    \abs{f_i} \ d \mu_n \leq \prod_{i:n \in A_i} \bigg( \int_{\Omega_n} \abs{f_i}^{p_i} \ d \mu_n\bigg)^{1/p_i}\,,
 \]
 thus for each $i$ with $n\in A_i$ we can let $f_i^*:\Omega_{[n-1]}\to\R$
 denote the averaging map $\left(\int_{\Omega_n} \abs{f_i}^{p_i}\ d\mu_n\right)^{1/p_i}$ and obtain that
  \begin{align*}
    \int_{\Omega} \prod_{i=1}^m \abs{f_i} \ d\mu \leq
     \int_{\Omega_{[n-1]}} \prod_{i : n \in A_i} f_i^* \prod_{i : n
    \notin A_i} \abs{f_i}\ d\mu_{[n-1]}\,.\,
  \end{align*}
 Now, the functions $f_i^*$ correspond to $A_i^* = A_i\setminus\{n\}$ and therefore, thanks to the assumption that
 $\sum_{i: \ell\in A_i} (1/p_i) \leq 1$ for each $\ell\in[n-1]$ we can apply the induction hypothesis and infer that
 \begin{align*}
     \int_{\Omega} \prod_{i=1}^m \abs{f_i} \ d\mu &\leq
          \prod_{i : n \in A_i}\bigg(\int_{\Omega_{[n-1]}} \paren{f_i^*}^{p_i} \ d\mu_{[n-1]} \bigg)^{1/p_i} \prod_{i : n
    \notin A_i} \bigg(\int_{\Omega_{[n-1]}} \abs{f_i}^{p_i}\ d\mu_{[n-1]}\bigg)^{1/p_i} \\
    &= \prod_{i=1}^m \bigg(\int_{\Omega} \abs{f_i}^{p_i} \ d\mu \bigg)^{1/p_i} \,,
 \end{align*}
 as required.
\end{proof}

It is helpful to compare Theorem~\ref{thm:gen-holder} with the standard H\"older's inequality for the case where
$p_i = d$ for all $i$. A direct application of H\"older's inequality produces the inequality
$\norm{\prod f_i}_1 \leq \prod_i \norm{f_i}_m$, whereas Theorem~\ref{thm:gen-holder} exploits the extra
assumption that $\#\{i:\ell\in A_i\}\leq d$ for all $\ell\in[n]$ and gives the stronger inequality
$\norm{\prod f_i}_1 \leq \prod_i \norm{f_i}_d$.
For instance,
\[
\paren{\int f_1(x,y) f_2(y,z) f_3(x,z) \ dxdydz}^2
\leq \prod_i \paren{\int f_i(x,y)^2 \ dxdy}\,.
\]
Placing this in the context of subgraph densities, as an immediate corollary of Theorem~\ref{thm:gen-holder}
(in the special case that $p_i=d$ for all $i$) we have the following inequality.

\begin{corollary}
  \label{cor:reg-holder}
  Let $H$ be a graph whose maximum degree is at most $d$, and let $f \in \cW$. Then
  \[ t(H, f)   \leq \norm{f}_{d}^{e(H)}\,.\]
\end{corollary}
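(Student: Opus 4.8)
The plan is to read the corollary off Theorem~\ref{thm:gen-holder} after the right bookkeeping: the index set of the generalized H\"older inequality should be the \emph{vertex} set of $H$, while its family of functions should be indexed by the \emph{edge} set. Concretely, relabel $V(H)=[n]$ and, for each $\ell\in[n]$, take $\Omega_\ell=[0,1]$ with Lebesgue measure, so that $\Omega=[0,1]^n$ carries the $n$-fold Lebesgue measure $\mu$. For each edge $\{i,j\}\in E(H)$ set $A_{\{i,j\}}=\{i,j\}$ and let the associated function be $f$ itself, viewed as an element of $L^{d}\paren{\Omega_{A_{\{i,j\}}},\mu_{A_{\{i,j\}}}}$ in the two coordinates $x_i,x_j$. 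Thus the number $m$ of functions in Theorem~\ref{thm:gen-holder} equals $e(H)$, and all exponents are $p_i=d$.

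Next I would verify the overlap hypothesis $\sum_{i:\ell\in A_i}(1/p_i)\le 1$. With every exponent equal to $d$, the left-hand side at a vertex $\ell$ is exactly $\deg_H(\ell)/d$, which is $\le 1$ precisely because $\Delta(H)\le d$; this is the only point at which the degree bound is used, and it is immediate. Applying the ``in particular'' clause of Theorem~\ref{thm:gen-holder} then gives
\[
\int_{[0,1]^n}\;\prod_{\{i,j\}\in E(H)}\abs{f(x_i,x_j)}\,dx_1\cdots dx_n
\;\le\;\prod_{\{i,j\}\in E(H)}\paren{\int_{[0,1]^2}\abs{f(x,y)}^{d}\,dx\,dy}^{1/d}
=\norm{f}_{d}^{\,e(H)}\,.
\]
Finally, since $t(H,f)=\int \prod_{\{i,j\}\in E(H)} f(x_i,x_j)\,dx_1\cdots dx_n \le \int \prod_{\{i,j\}\in E(H)}\abs{f(x_i,x_j)}\,dx_1\cdots dx_n$ (trivially, as $f\in\cW$ need not be nonnegative), combining the two inequalities yields $t(H,f)\le\norm{f}_{d}^{\,e(H)}$, as claimed.

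There is essentially no obstacle here beyond the organizational one just described: matching the two index sets correctly and checking that the hypothesis $\sum_{i:\ell\in A_i}1/p_i\le 1$ is exactly the statement $\Delta(H)\le d$. I would also remark that the bound is sharp for $d$-regular $H$ and constant $f$, the case that will be exploited in the analysis of the variational problem in \S\ref{sec:phase}.
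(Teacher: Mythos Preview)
Your proposal is correct and is precisely the argument the paper has in mind: the paper states the corollary ``as an immediate corollary of Theorem~\ref{thm:gen-holder} (in the special case that $p_i=d$ for all $i$)'' without writing out a separate proof, and your bookkeeping---vertices as the index set $[n]$, edges as the functions, and the observation that $\sum_{i:\ell\in A_i}1/p_i=\deg_H(\ell)/d\le 1$---is exactly the intended instantiation. Your extra care with the absolute value (since $f\in\cW$ need not be nonnegative) is a valid and necessary step that the paper leaves implicit.
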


Recall that Theorem~\ref{thm:variational} reduces the problem of finding the
phase boundary to determining whether the constant function $r$ is a
solution for the variational problem of minimizing $\Ip(f)$ over $f
\in \cW_0$ subject to $t(H, f) \geq r^{e(H)}$, where $H$ is some fixed graph.
In light of the above corollary, it is important to estimate $\Ip(f)$
for functions $f\in\cW_0$ with $\norm{f}_d=r$, as addressed next.

\begin{lemma}
  \label{lem:Ld/L1-Ip}
  Let $0 < p < 1$ and let $f\in \cW_0$. Suppose that $d\geq 1$ and $0<r<1$ are such that
  the point $(r^d,\Ip(r))$ lies on the convex minorant of $x \mapsto
  \Ip(x^{1/d})$. If in addition either
  \begin{enumerate}[(a)]
    \item \label{it-Ld/L1-Ip-Ld} $p<r<1$ and $\norm{f}_{d} \geq r$, or
  \item \label{it-Ld/L1-Ip-L1} $0<r<p$ and $\norm{f}_{d} \leq r$,
  \end{enumerate}
     then $\Ip(f) \geq \Ip(r)$, with equality occurring if and only if
  $f\equiv r$.


\end{lemma}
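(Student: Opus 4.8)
The plan is to reduce everything to a single-variable convexity argument. Let $J(x) := \Ip(x^{1/d})$ on $[0,1]$ and let $\widehat J$ be its convex minorant, so by hypothesis $J(r^d) = \widehat J(r^d)$. Writing $a := r^d$, the fact that $\widehat J$ is convex and touches $J$ at $a$ means there is a supporting line $\ell(x) = \widehat J(a) + m(x - a)$ with $\ell \le J$ everywhere and $\ell(a) = J(a) = \Ip(r)$; moreover, because $\widehat J$ lies below $J$, in case~\eqref{it-Ld/L1-Ip-Ld} ($p < r$, so $a$ is to the right of the point where $J$ is minimized) the slope $m$ is $\ge 0$, while in case~\eqref{it-Ld/L1-Ip-L1} ($r < p$) it is $\le 0$. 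The point of the convex-minorant hypothesis is exactly that such a supporting line exists at $a = r^d$.

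Now I would apply this pointwise. For $f \in \cW_0$, write $g(x,y) := f(x,y)^d \in [0,1]$. For every $(x,y)$ we have $\Ip(f(x,y)) = J(g(x,y)) \ge \ell(g(x,y)) = \Ip(r) + m\big(g(x,y) - r^d\big)$. Integrating over $[0,1]^2$ gives
\[
\Ip(f) \;\ge\; \Ip(r) + m\Big(\textstyle\int f^d - r^d\Big) = \Ip(r) + m\big(\norm{f}_d^d - r^d\big).
\]
In case~\eqref{it-Ld/L1-Ip-Ld}, $m \ge 0$ and $\norm{f}_d^d \ge r^d$, so the correction term is $\ge 0$ and $\Ip(f) \ge \Ip(r)$; in case~\eqref{it-Ld/L1-Ip-L1}, $m \le 0$ and $\norm{f}_d^d \le r^d$, again giving a nonnegative correction. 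This establishes the inequality in both cases.

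For the equality clause: if $\Ip(f) = \Ip(r)$, then the pointwise inequality $J(g) \ge \ell(g)$ must be an equality $\mu$-a.e., and additionally the correction term must vanish. The first condition forces $g(x,y)$ to lie a.e.\ in the contact set $\{x : J(x) = \ell(x)\}$; the main obstacle is to argue this contact set is small enough to pin down $f$. Here I would use strict convexity of $\Ip$ itself (it is strictly convex on $(0,1)$ since $\Ip''(x) = \frac1{x} + \frac1{1-x} > 0$), which pushes strict convexity onto $J$ away from any degeneracy, so that $J$ can agree with an affine function only on an interval where $J$ is itself affine — and one checks via the explicit form of $\Ip$ that $J(x) = \Ip(x^{1/d})$ is \emph{strictly} convex wherever it is not forced to be, in particular it is not affine on any nondegenerate subinterval containing $r^d$ in its interior unless $d=1$ and we are in a degenerate line segment. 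Combined with the vanishing of $m(\norm{f}_d^d - r^d)$: either $m \ne 0$, forcing $\norm{f}_d^d = r^d$ and then a.e.\ $g \equiv r^d$ on the contact set (hence $f \equiv r$), or $m = 0$, which happens only at the minimizer $r = p$ of $\Ip$ — excluded since we assume $r \ne p$ in both cases~\eqref{it-Ld/L1-Ip-Ld} and~\eqref{it-Ld/L1-Ip-L1}. So $m \ne 0$ throughout, and we conclude $f \equiv r$ a.e. The delicate point worth spelling out carefully is precisely the claim that the contact set of $J$ with its supporting line at $r^d$ is the single point $\{r^d\}$ when $r \ne p$; this is where one substitutes the explicit formula for $\Ip$ and differentiates, rather than appealing to soft convexity alone.
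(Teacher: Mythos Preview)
Your approach is essentially the paper's: the paper applies Jensen's inequality with the convex minorant $\hat\psi$ of $\psi(x) = \Ip(x^{1/d})$, while you linearize at $r^d$ via a supporting line $\ell$. These are the same idea, since a supporting line to $\hat\psi$ at $r^d$ automatically lies below $\psi$ everywhere, and integrating the pointwise bound $J(g) \geq \ell(g)$ is exactly Jensen for an affine minorant. The sign analysis of $m$ and the monotonicity of $\Ip$ on either side of $p$ play the same role in both arguments.

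One inaccuracy in your equality clause: the claim that the contact set $\{x : J(x) = \ell(x)\}$ is the singleton $\{r^d\}$ is false at the boundary, namely when $r$ equals one of the two tangency points $\uq, \oq$ of the common lower tangent (in the regime where $J$ is not convex). There the contact set is $\{\uq^d, \oq^d\}$, two points. Your argument is nonetheless easily repaired: you already know $m \neq 0$, hence $\int f^d = r^d$, and since $f^d$ is a.e.\ valued in $\{\uq^d, \oq^d\}$ with $r^d$ an extreme point of that set, the integral constraint forces $f^d \equiv r^d$. The paper sidesteps this case split by phrasing the key fact as ``$\hat\psi$ is not linear in any neighborhood of $r^d$,'' which remains true even at $r \in \{\uq, \oq\}$ because $\hat\psi$ coincides with the strictly convex $\psi$ on one side of $r^d$.
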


\begin{proof}
  Let $\psi(x) = \Ip(x^{1/d})$ and let $\hat\psi$ be the convex
  minorant of $\psi$. By Jensen's inequality, 
  \[
  \Ip(f) = \int \psi(f(x,y)^d) \ dxdy
         \geq \int \hat\psi(f(x,y)^d) \ dxdy
         \geq \hat\psi\paren{\int f(x,y)^d \ dxdy}
         = \hat\psi\paren{\norm{f}_{d}^d} = \Ip(\norm{f}_d)\,.
  \]
  The fact that $\Ip(x)$ is decreasing along $[0,p]$ and increasing along $[p,1]$ (see \S\ref{app:minorant})
  implies that under either of the assumptions in Part~\eqref{it-Ld/L1-Ip-Ld} and Part~\eqref{it-Ld/L1-Ip-L1}
  we have $\Ip(\norm{f}_d) \geq \Ip(r)$, hence $\Ip(f) \geq \Ip(r)$.
  Since $\hat\psi$ is not linear in any neighborhood of $r^d$, equality
  can occur if and only if $f = r$.
\end{proof}

\begin{figure}
  \centering
  \begin{tikzpicture}
    \node (tri) at (0,0)
    {\includegraphics[scale=.6]{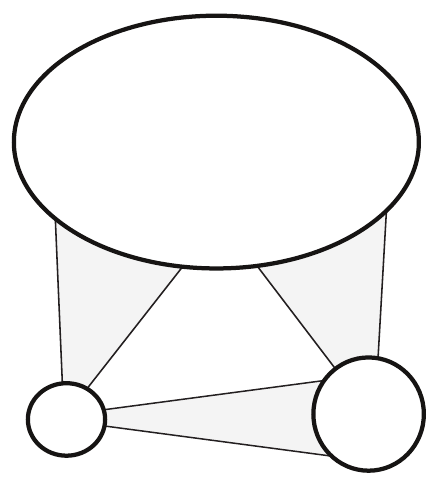}};

    \node (plot) at (8cm,0)
    {\includegraphics{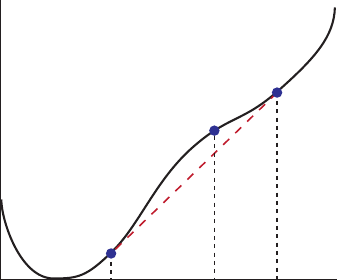}};

    \begin{scope}[shift={(tri.south west)},x={(tri.south east)},y={(tri.north west)}]
      \node at (.17,.14) {$r$};
      \node at (.82,.15) {$r$};
      \node at (.5,.7) {$r$};
      \node at (.22,.35) {$r_1$};
      \node at (.75, .35) {$r_2$};
      \node at (.5, .15) {$r$};
      \node at (.17, 0) {$a$};
      \node at (.82, -.02) {$b$};
      \node at (.5,1.02) {$1-a-b$};
      \node at (.1,.9) {$I_0$};
      \node at (0.02,.15) {$I_1$};
      \node at (1,.15) {$I_2$};
    \end{scope}

    \begin{scope}[shift={(plot.south west)},x={(plot.south
        east)},y={(plot.north west)}, font=\small]
      \node at (1.05,0.03) {$x$};
      \node at (.8,.9) {$\Ip (x^{1/d})$};
      \node at (.33,-.03) {$r_1^d$};
      \node at (.8,-.03) {$r_2^d$};
      \node at (.64,-.03) {$r^d$};
      \draw[<->, color=blue, dashed] (.36,.05) -- (.61,.05) ;
      \draw[<->, color=blue, dashed] (.65,.05) -- (.79,.05) ;
      \node[color=blue,font=\tiny] at (.5,.075) {$(1-s)\,\ell$};
      \node[color=blue,font=\tiny] at (.72,.075) {$s\,\ell$};
      \draw[<->, color=blue, dashed] (.36,.11) -- (.79,.11) ;
      \node[color=blue,font=\tiny] at (.59,.14) {$\ell$};
    \end{scope}
  \end{tikzpicture}
  \caption{The construction in Lemma~\ref{lem:break-count}.}
  \label{fig:construction}
\end{figure}

The final element needed for the proof of Theorem~\ref{thm:count-ldp} is a
construction that outperforms the constant
graphon in the symmetry breaking regime. This is achieved by the following lemma.

\begin{lemma}
  \label{lem:break-count}
  Let $H$ be a $d$-regular graph. Fix $0 < p \leq r < 1$ so that $(r^d, \Ip(r))$ is not
  on the convex minorant of $x \mapsto \Ip(x^{1/d})$.
  Then there exists $f \in \cW_0$ with $t(H, f) > r^{e(H)}$ and
  $\Ip(f) < \Ip(r)$.
\end{lemma}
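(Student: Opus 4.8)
\emph{Overview.} The plan is to perturb the constant graphon $r$ by lowering the density on a thin ``$I_0$--$I_1$'' block to some $r_1<r$ and raising it on an even thinner ``$I_0$--$I_2$'' block to some $r_2>r$ (the construction depicted in Figure~\ref{fig:construction}). Replacing $r$ by $r_1$ and by $r_2$ move $\Ip$ in opposite directions and likewise move $t(H,\cdot)$ in opposite directions; the convex--minorant hypothesis says precisely that these two trade-offs are out of sync, so a suitable choice of the two block sizes decreases $\Ip$ while increasing $t(H,\cdot)$. What makes the arithmetic work is the $d$-regularity of $H$: a single vertex of $H$ mapped into $I_i$ drags \emph{all $d$} of its edges into the perturbed block, so the first-order change of $t(H,\cdot)$ is controlled by $r_i^{\,d}$ rather than $r_i$ --- which is exactly why the governing function is $x\mapsto\Ip(x^{1/d})$.

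\emph{Extracting the two densities.} Write $\psi(x):=\Ip(x^{1/d})$ and let $\hat\psi$ be its convex minorant. Since $(r^d,\Ip(r))$ lies strictly above $\hat\psi$, the point $r^d$ lies in the interior of a maximal interval $[\alpha,\beta]$ on which $\hat\psi$ is affine, with $\hat\psi(\alpha)=\psi(\alpha)$ and $\hat\psi(\beta)=\psi(\beta)$ (see \S\ref{app:minorant}). Put $r_1:=\alpha^{1/d}$ and $r_2:=\beta^{1/d}$, so $0\le r_1<r<r_2\le1$, and $s:=(r^d-r_1^d)/(r_2^d-r_1^d)\in(0,1)$; then $(1-s)\Ip(r_1)+s\Ip(r_2)=\hat\psi(r^d)<\psi(r^d)=\Ip(r)$. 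I would also record that $r_1\ge p$: were $\alpha<p^d$, the affine piece would straddle the global minimum $p^d$ of $\psi$, forcing the chord between the nonnegative values $\psi(\alpha),\psi(\beta)$ to take a negative value at $p^d$, which is absurd. Hence $\Ip(r)-\Ip(r_1)>0$ (as $\Ip$ is increasing on $[p,1]$), and rearranging the displayed strict inequality gives
\[
\frac{\Ip(r_2)-\Ip(r)}{\Ip(r)-\Ip(r_1)}\;<\;\frac{r_2^d-r^d}{r^d-r_1^d}\,;
\]
fix a real number $\kappa$ strictly between the two sides.

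\emph{The construction and the two estimates.} For a small $b>0$ to be chosen last, partition $[0,1]$ into intervals $I_1,I_2,I_0$ of lengths $\kappa b$, $b$, $1-(1+\kappa)b$, and let $f\in\cW_0$ be $r_1$ on $(I_0\times I_1)\cup(I_1\times I_0)$, be $r_2$ on $(I_0\times I_2)\cup(I_2\times I_0)$, and be $r$ elsewhere. A direct computation gives
\[
\Ip(f)=\Ip(r)+2|I_0|\bigl(|I_1|(\Ip(r_1)-\Ip(r))+|I_2|(\Ip(r_2)-\Ip(r))\bigr)\,,
\]
and since $|I_1|=\kappa|I_2|$ the parenthesis equals $|I_2|\bigl((\Ip(r_2)-\Ip(r))-\kappa(\Ip(r)-\Ip(r_1))\bigr)<0$ by the choice of $\kappa$, so $\Ip(f)<\Ip(r)$ for every $b>0$. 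For $t(H,f)$ I would expand over the sets $T_1,T_2\subseteq V(H)$ of vertices sent into $I_1$, $I_2$: with $m=|V(H)|$, the $T_1=T_2=\emptyset$ term is $|I_0|^m r^{e(H)}$, each term with exactly one vertex in $I_i$ contributes $|I_0|^{m-1}|I_i|\,r_i^{\,d}\,r^{e(H)-d}$ (this is where $d$-regularity enters: the stray vertex flips exactly $d$ edges to density $r_i$), and the terms with $|T_1|+|T_2|\ge2$ total at most $3^m(|I_1|+|I_2|)^2=O_m(b^2)$ since every edge weight is at most $1$. Summing and expanding $|I_0|^m=1-m(|I_1|+|I_2|)+O(b^2)$,
\[
t(H,f)=r^{e(H)}+m\,r^{e(H)-d}\bigl(|I_1|(r_1^d-r^d)+|I_2|(r_2^d-r^d)\bigr)+O_m(b^2)\,,
\]
and the middle term equals $m\,r^{e(H)-d}|I_2|\bigl((r_2^d-r^d)-\kappa(r^d-r_1^d)\bigr)>0$, of order $b$. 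Hence for all sufficiently small $b$ we have $t(H,f)>r^{e(H)}$ while still $\Ip(f)<\Ip(r)$, completing the proof.

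\emph{Main obstacle.} The delicate point is the $t(H,f)$ expansion, and specifically the observation that $d$-regularity forces each stray vertex to flip exactly $d$ edges, producing $r_i^{\,d}$ in the leading term --- the feature that aligns the computation with the hypothesis phrased through $\Ip(x^{1/d})$ and that corrects the earlier natural guess of $\Ip(x^{1/3})$ for triangles --- together with keeping the $O_m(b^2)$ remainder uniform so that it is genuinely dominated by the $\Theta(b)$ main terms. The extraction step also relies on the (mild) input from \S\ref{app:minorant} that $p^d$ is a touch point of $\hat\psi$, which is what guarantees $\Ip(r)>\Ip(r_1)$.
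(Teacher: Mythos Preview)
Your proof is correct and uses essentially the same construction and first-order expansion as the paper: perturb the constant graphon $r$ on two thin cross-strips to values $r_1<r$ and $r_2>r$, and exploit $d$-regularity to see that a single stray vertex flips exactly $d$ edges, making $r_i^{\,d}$ the governing quantity. The only cosmetic difference is the parametrization of the strip widths: you fix the ratio $\kappa=|I_1|/|I_2|$ strictly between the two critical ratios $(\Ip(r_2)-\Ip(r))/(\Ip(r)-\Ip(r_1))$ and $(r_2^d-r^d)/(r^d-r_1^d)$, whereas the paper takes $(a,b)=(s\e^2,(1-s)\e^2+\e^3)$ so that the balancing ratio $s:(1-s)$ makes the first-order change in $t(H,\cdot)$ vanish and the $\e^3$ tilt then renders it positive --- in particular the paper never divides by $\Ip(r)-\Ip(r_1)$ and so does not need your side fact $r_1\ge p$ (your in-line justification of which is slightly off, though the conclusion is indeed immediate from Lemma~\ref{lem:hp-shape}).
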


\begin{proof}
  Since $(r^d, \Ip(r))$ does not lie on the convex minorant
  of $x\mapsto \Ip(x^{1/d})$, there necessarily
  exist $0 \leq r_1 < r < r_2 \leq 1$ such that the point $(r^d,
  \Ip(r))$ lies strictly above the line segment joining $(r_1^d,
  \Ip(r_1))$ and $(r_2^d, \Ip(r_2))$. Letting $s$ be such
  that
  \[ r^d = s r_1^d + (1-s) r_2^d\,,\]
   we therefore have
  \begin{equation}
    \label{eq-s-r1-r2-inequality}
    s \Ip(r_1) + (1-s)
  \Ip(r_2) < \Ip(r)\,.
  \end{equation}
  Let $\e > 0$ and define
  \begin{align*}
   a &= s\e^2\,, \qquad b =   (1-s)\e^2 + \e^3\,,\\
   I_0 &= [a,1-b]\,,\qquad I_1 = [0,a]\,, \qquad I_2 = [1-b,1]\,,
  \end{align*}
  noting that for $a<1-b$ for any sufficiently small $\epsilon$.
  Define $f_\e \in \cW_0$ by
   \[
   f_\epsilon(x,y) = \begin{cases}
     r_1 & \mbox{if $(x,y)\in (I_0 \x I_1) \cup (I_1 \x I_0)$}\,, \\
     r_2 & \mbox{if $(x,y)\in (I_0 \x I_2) \cup (I_2 \x I_0)$}\,, \\
     r   & \mbox{otherwise}\,.
    \end{cases}
   \]
(See Fig.~\ref{fig:construction} for an illustration of this construction.) We claim that
  \begin{equation}
    \label{eq-t(H,f)-r^d}
      t(H, f_\e) - r^{e(H)} = v(H) \paren{ a(r_1^d - r^d) + b(r_2^d - r^d)
  }r^{e(H) - d} + O(\e^4)\,.
  \end{equation}
  Indeed, the only embeddings of $H$ that have values different from
  $r^{e(H)}$ are those where at least one vertex of $H$ is mapped to $I_1\cup I_2$.
  Since $a$ and $b$ are both $O(\e^2)$, in order to compute
  $t(H, f_\e) - r^{e(H)}$ up to an $O(\e^4)$
  error we need only consider embeddings of $H$ where precisely one
  vertex gets mapped to $I_1\cup I_2$. Denote this vertex of $H$ by $u$,
  and observe that if $u$ is mapped to $I_1$ then the contribution
  to $t(H, f_\e) - r^{e(H)}$ is $(r_1^d - r^d) r^{e(H) - d}$ since $H$
  is $d$-regular. Similarly, if $u$ is mapped to $I_2$ then
  the contribution is $(r_2^d - r^d) r^{e(H) - d}$.
  Putting everything together yields~\eqref{eq-t(H,f)-r^d}.

  By definition of $a,b$ and $s$ we have
  \[
  a(r_1^d - r^d) + b(r_2^d - r^d)
  = s\e^2 (r_1^d - r^d) + ((1-s)\e^2 + \e^3) (r_2^d - r^d)
  = \e^3(r_2^d - r^d)\,.
  \]
  Recalling that $r_2 > r$ and plugging the last equation in~\eqref{eq-t(H,f)-r^d} it now follows that $t(H,f_\e) > r^{e(H)}$ for any sufficiently small $\e >0$.
  At the same time, we also
  have
  \begin{align*}
    \Ip(f_\e) - \Ip(r)
    &=
    2a(1-a-b)(\Ip(r_1) - \Ip(r)) + 2b(1-a-b)(\Ip(r_2) - \Ip(r))
    \\
    &= 2(1-a-b)\paren{a \Ip(r_1) + b \Ip(r_2) - (a+b) \Ip(r)}
    \\
    &= 2(1-a-b) \e^2 \paren{s \Ip(r_1) + (1-s)\Ip(r_2) - \Ip(r) + (\Ip(r_2)
      - \Ip(r))\e} \,.
  \end{align*}
  Revisiting~\eqref{eq-s-r1-r2-inequality} we conclude that $\Ip(f_\e) < \Ip(r)$ for any sufficiently small $\e > 0$.
\end{proof}

We now have all the ingredients needed for establishing the phase diagram of upper tail deviations for subgraph densities.

\begin{proof}[\textbf{\emph{Proof of Theorem~\ref{thm:count-ldp}}}]
For Part~\eqref{it-thm1-1}, by applying Theorem~\ref{thm:variational} to the graph parameter $t(H,
  \cdot)$, it suffices to show that the constant function $r$ is the
  unique element $f \in \cW_0$ minimizing $\Ip(f)$ subject to $t(H, F)
  \geq r^{e(H)}$. Indeed, by
  Corollary~\ref{cor:reg-holder}, $t(H, F)
  \geq r^{e(H)}$ implies that $\norm{f}_d \geq r$, and by
  Lemma~\ref{lem:Ld/L1-Ip} Part~\eqref{it-Ld/L1-Ip-Ld}, $\Ip(f) \geq \Ip(r)$ with equality if and
  only if $f$ is the constant function $r$.

To prove Part~\eqref{it-thm1-2}, let $ F^* \subset \wt \cW_0$ be the set of minimizers for the variational
  problem \eqref{eq:variational} with the graph parameter $t(H,
  \cdot)$. Then  $ F^*$ does not contain
  the constant function $r$ by Lemma~\ref{lem:break-count}, nor does it contain any constant function
  of value $r'\neq r$ (when $r'>r$ one has $\Ip(r') > \Ip(r)$, whereas if $r'<r$ then $t(H, f) < r^{e(H)}$).
  Let $\ER\subset \wt \cW_0 $ be the set of constant
  graphons. Since $ F^*$ and $\ER$ are disjoint and both are compact, $\delta_\square( F^*, \ER) > 0$. The desired
  result follows from applying
  Theorem~\ref{thm:variational} with $\e = \delta_\square( F^*, \ER)/2$.

When $d=2$, the phase boundary is explicitly given by Lemma~\ref{lem:d2-boundary},
thus concluding the proof.
\end{proof}


%

%

One can also ask what the phase diagram is for lower tail deviations of
subgraph densities. We next show that for certain bipartite graphs there is replica symmetry everywhere for lower tails.

A beautiful conjecture of Erd\H{o}s and Simonovits~\cite{Sim84} and
Sidorenko~\cite{Sid93} (from here on referred to as \emph{Sidorenko's
  conjecture}) states that every bipartite graph $H$ satisfies $t(H,
G) \geq t(K_2, G)^{e(H)}$ for every graph $G$. The conjecture was
verified for various graphs $H$ (e.g., trees, even cycles~\cite{Sid93},
hypercubes~\cite{Hat10}, bipartite graphs with one vertex complete to the other part~\cite{CFS10}). As it turns out, for any such graph $H$
the lower tail deviations are always replica symmetric (no phase transition).

\begin{proposition}
  \label{prop:sidorenko-lower}
Fix $0 < r \leq p < 1$, let $G_n\sim \cG(n,p)$ be the Erd\H{o}s-R\'enyi random graph and let $H$ be a fixed bipartite graph for which Sidorenko's conjecture holds. Then
  \[
  \lim_{n \to \infty} \frac{1}{\binom{n}{2}} \log \P\paren{t(H, G_n)
    \leq r^{e(H)} } = - \Ip(r)
  \]
  and furthermore, for every $\e > 0$ there exists some constant $C=C(H,\e,p,r)>0$ such that
  \[
  \P \cond{\delta_\square(G_n, r) < \e}{t(H, G_n) \leq
    r^{e(H)}} \geq 1 - e^{- C n^2}\,.
  \]
\end{proposition}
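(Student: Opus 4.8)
The plan is to deduce the result from Theorem~\ref{thm:variational}, applied to the nice graph parameter $\tau=-t(H,\cdot)$. This parameter is nice by Example~\ref{ex:subgraph} together with the remark following Theorem~\ref{thm:variational} (negating a nice parameter keeps it nice), and for the threshold $t=-r^{e(H)}$ we have $-r^{e(H)}<0=\max(-t(H,\cdot))$ since $r>0$ and $H$ has at least one edge, so the hypotheses hold. The whole proposition then follows once I identify the associated variational problem, i.e.\ once I show that
\[
\phi_{-t(H,\cdot)}\big(p,-r^{e(H)}\big) \;=\; \inf\set{\Ip(f) : f\in\cW_0,\ t(H,f)\leq r^{e(H)}} \;=\; \Ip(r),
\]
with the constant graphon $f\equiv r$ as the \emph{unique} minimizer.

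To get the lower bound $\Ip(f)\geq\Ip(r)$ together with uniqueness, I would chain three inequalities. First, because $H$ satisfies Sidorenko's conjecture in its graphon form $t(H,W)\geq t(K_2,W)^{e(H)}$ for all $W\in\cW_0$ (equivalent to the usual graph form by a routine limiting argument, e.g.\ via $W$-random graphs and the $\delta_\square$-continuity of $t(H,\cdot)$ and $t(K_2,\cdot)$), any admissible $f$ obeys $\big(\int f\big)^{e(H)}=t(K_2,f)^{e(H)}\leq t(H,f)\leq r^{e(H)}$, hence $\norm{f}_1=\int f\leq r$. Second, since $\Ip$ is strictly convex, Jensen's inequality gives
\[
\Ip(f)=\int \Ip(f(x,y))\,dx\,dy \ \geq\ \Ip\!\left(\int f(x,y)\,dx\,dy\right)=\Ip(\norm{f}_1),
\]
with equality precisely when $f$ is a.e.\ constant. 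Third, as $\norm{f}_1\leq r\leq p$ and $\Ip$ is strictly decreasing on $[0,p]$ (see \S\ref{app:minorant}), we have $\Ip(\norm{f}_1)\geq\Ip(r)$, with equality iff $\norm{f}_1=r$. Combining, $\Ip(f)\geq\Ip(r)$, and equality throughout forces $f\equiv r$. For the matching upper bound there is nothing to do: the constant graphon $r$ is admissible since $t(H,r)=r^{e(H)}$, and has $\Ip$-value $\Ip(r)$. Hence the infimum equals $\Ip(r)$, the minimizer set is $F^*=\{r\}$, and its image is $\wt F^*=\{r\}\subset\wt\cW_0$.

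Plugging this into Theorem~\ref{thm:variational} then yields both claims at once: the rate statement $\frac{1}{\binom{n}{2}}\log\P\paren{t(H,G_n)\leq r^{e(H)}}\to-\Ip(r)$, and, because $\wt F^*$ is a single point, the concentration $\P\cond{\delta_\square(G_n,r)<\e}{t(H,G_n)\leq r^{e(H)}}\geq 1-e^{-Cn^2}$ for a suitable $C=C(H,\e,p,r)>0$. The only step that takes more than one line is justifying the graphon form of Sidorenko's conjecture from the graph form recorded in the cited works (trees, even cycles, hypercubes, bipartite graphs with a vertex complete to the other side); this is standard and I would just state the limiting argument explicitly. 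Beyond that there is no genuine obstacle, and this is to be expected: Sidorenko's inequality bounds $t(H,\cdot)$ from below by a power of the edge density, so suppressing copies of $H$ forces the edge density down, and the cheapest way to lower it to $r$ is uniformly, at cost exactly $\Ip(r)$ --- the lower tail simply has no room for symmetry breaking.
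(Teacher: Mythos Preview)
Your proof is correct and follows essentially the same route as the paper: apply Theorem~\ref{thm:variational} to $-t(H,\cdot)$, use Sidorenko's inequality to force $\norm{f}_1\le r$, then Jensen (strict convexity of $\Ip$) together with the monotonicity of $\Ip$ on $[0,p]$ to conclude $\Ip(f)\ge \Ip(r)$ with equality only at $f\equiv r$. The paper packages the last two steps as an invocation of Lemma~\ref{lem:Ld/L1-Ip}\eqref{it-Ld/L1-Ip-L1} with $d=1$, whereas you unpack them directly, but the content is identical.
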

\begin{proof}
  Applying Theorem~\ref{thm:variational} with $-t(H,\cdot)$, it suffices to show that the
  constant function $r$ is the unique element $f \in \cW_0$ minimizing
  $\Ip(f)$ subject to $t(H, f) \leq r^{e(H)}$. Since $H$ satisfies
  Sidorenko's conjecture, $t(H,f) \geq \norm{f}_1^{e(H)}$ for all $f
  \in \cW_0$. Thus, if $t(H, f) \leq r^{e(H)}$ then $\norm{f}_1 \leq r$,
  and so by Lemma~\ref{lem:Ld/L1-Ip} Part~\eqref{it-Ld/L1-Ip-L1} (applied to the case $d=1$, noting that then $\Ip(x^{1/d})$ is itself convex) we have
  $\Ip(f) \geq \Ip(r)$ with equality
  if and only if $f$ is the constant function $r$.
\end{proof}


\subsection{Largest eigenvalue} \label{sec:phase-eigen}

In this section we prove Theorem~\ref{thm:spectral-ldp}, addressing the
phase boundary for large deviations in the spectral norm.
The proof will follow a similar route as the previous section, yet
first we must show that the spectral norm is a nice graph parameter.

For a graph $G$ on $n$ vertices, let $\lambda_1(G)$ be the largest
eigenvalue of its adjacency matrix $A_G$.
Since $A_G$ is symmetric, $\lambda_1(G) = \norm{A_G}_{\op}$, and therefore over $\cW$ we have $\norm{f^G}_\op \geq \lambda_1(G)/n$.
It is easy to verify (see Lemma~\ref{lem:eigen-op} below) that in fact
$\norm{f^G}_\op = \lambda_1(G)/n$, thus the operator norm
on $\cW$ is the graphon extension of the (normalized) largest eigenvalue. Furthermore,
as we show below, $\norm{\cdot}_\op$ is (uniformly) continuous
w.r.t.\ the cut-metric, and the local extrema condition in Definition~\ref{def:nice} is
satisfied as well, thus $\norm{\cdot}_\op$ is a nice graph parameter.

\begin{lemma} \label{lem:eigen-op}
The function $\norm{\cdot}_\op$ is a continuous extension of the normalized graph spectral norm, i.e., $\lambda_1(G)/n$ for a graph $G$ on $n$ vertices, to $(\wt\cW_0,\delta_\square)$. Moreover, $\norm{\cdot}_\op$ is a nice graph parameter.
\end{lemma}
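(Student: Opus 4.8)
The statement has three parts: (1) $\norm{f^G}_\op = \lambda_1(G)/n$ for a graph $G$ on $n$ vertices, so that $\norm{\cdot}_\op$ is genuinely the graphon extension of the normalized spectral norm; (2) $\norm{\cdot}_\op$ is continuous on $(\wt\cW_0,\delta_\square)$; and (3) $\norm{\cdot}_\op$ satisfies the local-extrema condition of Definition~\ref{def:nice}. For (1), the inequality $\norm{f^G}_\op \ge \lambda_1(G)/n$ is already observed in the text; for the reverse, I would take a unit eigenvector or near-maximizer $u\in L^2([0,1])$ for $T_{f^G}$ and note that since $f^G$ is a block-constant kernel (constant on each cell $((i-1)/n,i/n]\times((j-1)/n,j/n]$), the Rayleigh quotient $\langle T_{f^G}u,u\rangle$ only depends on the averages $\bar u_i = n\int_{(i-1)/n}^{i/n} u$; replacing $u$ by the step function with those values on each cell does not decrease $\langle T_{f^G}u,u\rangle$ and does not increase $\norm{u}_2$ (Cauchy–Schwarz/Jensen), so the operator norm is attained on step functions, where $T_{f^G}$ is conjugate to $\tfrac1n A_G$ acting on $\R^n$. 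Hence $\norm{f^G}_\op = \lambda_1(A_G)/n$.

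For (2), I would prove the stronger quantitative bound $\bigl|\,\norm{f}_\op - \norm{g}_\op\,\bigr| \le \norm{f-g}_\op$ (reverse triangle inequality for the operator norm), and then control $\norm{h}_\op$ by the cut-norm. The cleanest route: for $h=f-g \in \cW$ one has $\norm{h}_\op \le \sup_{\norm{u}_2,\norm{v}_2\le 1}|\langle T_h u,v\rangle|$, and I would like to bound this by (a constant times) $\norm{h}_\square^{1/2}$ or at least show it tends to $0$ with $\norm{h}_\square$ for kernels bounded by $1$. The standard fact here is that on $\cW_0$ the operator norm is controlled by the cut-norm — e.g. via the bound $\norm{h}_\op \le \sqrt{\norm{h}_1}\le\sqrt{2\norm{h}_\square}$ when $0\le h\le 1$, or more generally using the inequality relating $\norm{\cdot}_\op$ to $\norm{\cdot}_\square$ for bounded kernels (this appears in the Lovász–Szegedy / Borgs–Chayes–Lovász–Sós–Vesztergombi circle of ideas). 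Since $\delta_\square$ is an infimum over measure-preserving $\sigma$ and $\norm{f^\sigma}_\op = \norm{f}_\op$, continuity in $\delta_\square$ on $\wt\cW_0$ follows. I expect this step — pinning down the exact elementary inequality bounding $\norm{\cdot}_\op$ by a power of $\norm{\cdot}_\square$ for $[0,1]$-valued kernels — to be the main technical obstacle, though it is a known estimate.

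For (3), I would use monotonicity of $\norm{\cdot}_\op$ under pointwise increase of nonnegative kernels: if $0\le f\le g\le 1$ then $\norm{f}_\op\le\norm{g}_\op$ (since $\langle T_f u,u\rangle \le \langle T_g u,u\rangle$ for $u\ge 0$, and the operator norm of a nonnegative-kernel operator is attained at a nonnegative $u$ by Perron–Frobenius-type reasoning). Then, given $f\in\cW_0$ that is not a global maximizer, the perturbation $g^+=\min\{f+\e,1\}$ has $\norm{g^+}_\op \ge \norm{f}_\op + c(\e)$ for some $c(\e)>0$ unless $f$ is already the all-ones kernel (which is the unique global maximizer, with $\norm{\cdot}_\op = 1$); concretely, testing against the constant function $u\equiv 1$ gives $\norm{g^+}_\op \ge \int g^+ \ge \int f + (\e\cdot\mu(\{f\le 1-\e\}))$, which strictly exceeds $\norm{f}_\op$ once we also note $\norm{f}_\op$ can be made $<1$ away from the all-ones kernel — actually it is cleaner to argue: $f$ not a global max means $f\not\equiv 1$, so $\mu(\{f<1\})>0$, and then $\norm{g^+}_\op > \norm{f}_\op$ because the operator $T_{g^+}-T_f$ has a strictly positive kernel on a positive-measure set, which strictly increases the spectral radius (using that the dominant eigenfunction of $T_f$, or of a small perturbation, has full support, or by the variational characterization with $u$ the eigenfunction of $f$ perturbed). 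For the minimum direction, $g^-=(1-\e)f$ has $\norm{g^-}_\op=(1-\e)\norm{f}_\op < \norm{f}_\op$ unless $\norm{f}_\op=0$, i.e. $f\equiv 0$, which is the global minimizer. This matches the template already used in Example~\ref{ex:subgraph}, and I would organize the write-up to parallel that example as closely as possible.
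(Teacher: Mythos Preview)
Your outline for part (1) and for the ``no local minimum'' half of part (3) matches the paper's proof exactly. The two places where your plan diverges, and where it has real gaps, are the continuity step and the ``no local maximum'' step.

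For continuity, your proposed route $\norm{h}_\op \le \sqrt{\norm{h}_1}$ (for $0\le h\le 1$) does not apply, because $h=f-g$ takes values in $[-1,1]$, not $[0,1]$; and for signed bounded kernels there is no inequality of the form $\norm{h}_1 \lesssim \norm{h}_\square$. The paper instead proves the explicit bound $\norm{h}_\op^4 \le 4\norm{h}_\square$ for symmetric $h:[0,1]^2\to[-1,1]$ by two applications of Cauchy--Schwarz: one has
\[
\norm{T_h u}_2^4 \le \int h(x,y)h(x,y')h(x',y)h(x',y')\,dx\,dx'\,dy\,dy',
\]
and then for fixed $x',y'$ the inner integral over $(x,y)$ is $\int h(x,y)v(x)w(y)\,dx\,dy$ with $v,w$ taking values in $[-1,1]$, so it is at most $4\norm{h}_\square$. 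This gives uniform continuity on $(\wt\cW_0,\delta_\square)$ directly. Your appeal to ``a known estimate'' is not wrong, but this is the estimate, and it is short enough to write down.

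For the local-maximum condition, your argument hinges on the top eigenfunction of $T_f$ having full support, which is false in general (e.g.\ $f=\one_{[0,1/2]^2}$ has top eigenfunction supported on $[0,1/2]$). The paper handles this by a case split: let $u\ge 0$ be a nonzero eigenfunction with $T_fu=\norm{f}_\op u$ and let $A=\{u>0\}$. If $\mu(A)<1$, then $T_fu=0$ a.e.\ on $A^c$ forces $f=0$ on $A^c\times A$, hence $g^+:=\min\{f+\e,1\}=\e$ there and $T_{g^+}u=\e\norm{u}_1>0=T_fu$ on $A^c$, giving $\norm{g^+}_\op>\norm{f}_\op$. If $\mu(A)=1$, then $u>0$ a.e., and since $f\not\equiv 1$ we have $g^+>f$ on a set of positive measure, so $T_{g^+}u>T_fu$ on a set of positive measure, again giving strict increase. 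Your ``small perturbation has full support'' remark is in the right spirit but does not constitute a proof; the case analysis is what makes this rigorous.
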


\begin{proof}
We first show that $\norm{f}_\op = \lambda_1(G)/n$ for any graph $G$ on $n$ vertices.
  Clearly, the largest eigenvector of $A_G$, the adjacency matrix of $G$, can be turned
  into a step function $u \colon [0,1] \to \R$ such that $T_{f^G} u =
  (\lambda_1(G)/n) u$, and so $\norm{f^G}_\op \geq
  \lambda_1(G)/n$. Conversely, for any $u \colon [0,1] \to \R$
  we consider the step function $u_n \colon [0,1] \to \R$ such that for
  any $1 \leq i \leq n$, on the interval $(\frac{i-1}{n},
  \frac{i}{n}]$ it is equal to the average of
  $u$ over that interval. Let $v \in \R^n$ be the vector of values
  of $u_n$. Since $f^G$ is constant in every box $(\frac{i-1}{n},
  \frac{i}{n}] \x (\frac{j-1}{n}, \frac{j}{n}]$, we have
  \[
  \norm{T_f
    u}_2 = \norm{T_f u_n}_2 = \norm{A_G v}_2/n^2 \leq \lambda_1(G) \norm{v}_2
  / n^2 = \lambda_1(G) \norm{u_n}_2 / n \leq \lambda_1(G) \norm{u}_2
  /n,
  \]
  where the last inequality is due to convexity. It follows that
  $\norm{f}_\op = \lambda_1(G)/n$.


Next, we will argue that
\begin{equation}
    \label{eq:op-cut}
   \norm{f}_\op^4 \leq 4\norm{f}_\square \qquad
   \mbox{for any symmetric measurable $f \colon [0,1]^2 \to [-1,1]$.}
\end{equation}
Let $u \in L^2([0,1])$ with $\norm{u}_2 = 1$. By Cauchy-Schwarz we can infer that
\begin{align*}
 \norm{T_f u}_2^4
&=\paren{\int \paren{\int f(x,y) u(y) \ dy}^2 \ dx}^2
= \paren{\int f(x,y) f(x,y') u(y) u(y') \ dxdydy'}^2
\\
&\leq
\paren{\int \paren{\int f(x,y)f(x,y') \ dx}^2 \ dydy'} \paren{\int
  u(y)^2 u(y')^2 \ dydy'}\,.
\\
&= \int f(x,y) f(x,y') f(x',y) f(x',y') \ dxdx'dydy'\,.
\end{align*}
For any fixed $x',y'$ we can let
$v_{y'}(x) = f(x,y')$ and $w_{x'}(y) = f(x',y)$, thus rewriting the above as
\[ \int \paren{\int f(x,y) v_{y'}(x) w_{x'}(y) \ dxdy} f(x',y') \ dx'dy' \leq 4 \norm{f}_\square\,,
\]
with the last inequality justified by the fact that for any $g\in\cW$ and $v,w\colon [0,1]\to[-1,1]$
we have $\left| \int g(x,y) v(x) w(y)\ dxdy\right| \leq 4\norm{g}_\square$ by the definition
of the cut-norm, thereby establishing Eq.~\eqref{eq:op-cut}. (The factor of 4 above was due to splitting $v,w$ into positive and negative parts. Indeed,
for $f\colon [0,1]^2\to[0,1]$ the bound in~\eqref{eq:op-cut} remains valid without
the factor of 4 in the right-hand side.)

Consider $f,g \in \wt\cW_0$ and let $\sigma$ vary over all
measure-preserving bijections on $[0,1]$.
We then have
\[
\abs{\norm{f}_\op - \norm{g}_\op}
\leq \inf_\sigma \norm{f - g^\sigma}_\op
\leq \sqrt{2} \inf_\sigma \norm{f - g^\sigma}_\square^{1/4}
= \sqrt{2} \delta_\square(f,g)^{1/4} \, ,
\]
thus implying that $\norm{\cdot}_\op$ is uniformly continuous in $(\wt \cW_0,\delta_\square)$.

Finally, we need to verify the local extrema condition in Definition~\ref{def:nice}. Let $f \in \cW_0$. There are no local non-global minima
since $\norm{(1-\e) f}_\op = (1-\e)\norm{f}_\op < \norm{f}_\op$
unless $\norm{f}_\op= 0$ already. In addition, we claim that $g = \min\set{f + \e, 1}$
satisfies $\norm{g}_\op > \norm{f}_\op$ unless $\norm{f}_\op = 1$
already. Indeed, take some $u \in L^2([0,1])$ which is nonzero (a.e.) and satisfies
 $u \geq 0$ and $T_f u = \norm{f}_\op u$. It suffices to show that $T_{g} u
> T_f u$ on some subset of $[0,1]$ with positive measure.
Let $A = \set{x \in [0,1] : u(x) > 0}$ be the support of $u$, which by our choice
has positive Lebesgue measure $\mu(A)>0$.
 If $\mu(A) < 1$ then $T_f u = u = 0$ (a.e.) on $A^c :=
 [0,1]\setminus A$, so that $f = 0$ on $A^c \x A$. Hence, $g
 = \epsilon$ on $A^c \x A$ and
 $T_g u = \epsilon \norm{u}_1 > 0 = T_f u$ on $A^c$, as desired.
Suppose therefore that $\mu(A) = 1$. If $\norm{f}_\op<1$ we must have $g >
f$ on a subset of positive measure, and hence also $T_{g} u >
T_f u$ on a subset of positive measure, as $u$ has full support. This
shows that $f$ cannot be a local maximum unless $\norm{f}_\op =1$.
\end{proof}

In light of the above lemma, the variational problem under consideration in Theorem~\ref{thm:spectral-ldp} becomes
\begin{equation}
\label{eq:var-op}
\inf\{\Ip(f) : f \in \cW_0, \norm{f}_\op \geq r\} \, .
\end{equation}
We will need the following straightforward inequality relating the operator norm, $\norm{\cdot}_1$ and $\norm{\cdot}_2$.
\begin{lemma}
  \label{lem:L1-op-L2}
  For every $f \in \cW_0$ we have $\norm{f}_{1} \leq \norm{f}_\op \leq \norm{f}_{2}$.
\end{lemma}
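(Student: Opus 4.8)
The plan is to prove the two inequalities $\norm{f}_1 \le \norm{f}_\op$ and $\norm{f}_\op \le \norm{f}_2$ separately, both by elementary manipulations with the variational characterization of the operator norm of the Hilbert-Schmidt kernel $T_f$ and the nonnegativity of $f \in \cW_0$.

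For the upper bound $\norm{f}_\op \le \norm{f}_2$, I would recall that $\norm{f}_\op = \sup_{\norm{u}_2 = 1} \norm{T_f u}_2$, and for a fixed unit $u$ apply Cauchy--Schwarz inside the inner integral: $\abs{(T_f u)(x)}^2 = \abs{\int f(x,y)u(y)\,dy}^2 \le \paren{\int f(x,y)^2\,dy}\paren{\int u(y)^2\,dy} = \int f(x,y)^2\,dy$. Integrating over $x$ gives $\norm{T_f u}_2^2 \le \int f(x,y)^2\,dxdy = \norm{f}_2^2$, and taking the supremum over unit $u$ yields $\norm{f}_\op \le \norm{f}_2$. (This is just the standard fact that the operator norm of a Hilbert--Schmidt operator is at most its Hilbert--Schmidt norm, and does not even use $f \ge 0$.)

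For the lower bound $\norm{f}_1 \le \norm{f}_\op$, the key point is that $f \ge 0$, so testing $T_f$ against the constant function $\one$ is efficient. Taking $u \equiv 1$, which has $\norm{u}_2 = 1$, we get $(T_f u)(x) = \int f(x,y)\,dy$, which is nonnegative, and hence by Cauchy--Schwarz (or Jensen, using that $[0,1]$ has measure $1$) $\norm{T_f u}_2 \ge \norm{T_f u}_1 = \int_{[0,1]} \int_{[0,1]} f(x,y)\,dy\,dx = \norm{f}_1$, since $f$ being nonnegative means $\norm{f}_1 = \int f$. Therefore $\norm{f}_\op \ge \norm{T_f u}_2 \ge \norm{f}_1$. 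I would present these as a single short display-free paragraph each.

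I do not expect any real obstacle here; the only things to be careful about are (i) invoking $f \ge 0$ explicitly in the lower bound so that $\norm{f}_1 = \int f\,dxdy$ literally equals the double integral of $f$, and (ii) using that Lebesgue measure on $[0,1]$ is a probability measure so that $L^2 \hookrightarrow L^1$ with constant $1$ on both the domain side (for $\norm{u}_2 = 1$ from $u \equiv 1$) and when comparing $\norm{T_f u}_1 \le \norm{T_f u}_2$. Both inequalities are tight at the constant graphon $f \equiv c$, where all three quantities equal $c$, which is worth noting as a sanity check but is not needed for the proof.
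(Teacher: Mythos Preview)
Your proposal is correct and essentially identical to the paper's proof: for the upper bound the paper applies Cauchy--Schwarz to $\int f(x,y)u(y)\,dy$ and integrates in $x$, and for the lower bound it tests $T_f$ against $\one$ and uses $\norm{f}_1 = \norm{T_f \one}_1 \le \norm{T_f \one}_2 \le \norm{f}_\op$, exactly as you outline.
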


\begin{proof}
  For the left inequality, observe that since $f \geq 0$ we have that
  \[
  \norm{f}_{1} = \norm{T_f \mathbf{1}}_{1} \leq \norm{T_f \mathbf{1}}_{2} \leq \norm{f}_\op\,.
  \]
  For the right inequality in the statement of the lemma, let $u \colon [0,1] \to \R$. By Cauchy-Schwarz,
  \[
  \norm{T_f u}_{2}^2 =
  \int \paren{\int f(x,y) u(y) \ dy}^2 \ dx
  \leq \paren{\int f(x,y)^2 \ dydx} \paren{\int u(y)^2 \ dy}
  = \norm{f}_{2}^2 \norm{u}_{2}^2\,,
  \]
  and therefore $\norm{f}_\op \leq \norm{f}_{2}$, as claimed.
\end{proof}

The following lemma is the operator
norm analogue of Lemma~\ref{lem:break-count}, providing a construction that beats the constant
graphon in the symmetry breaking regime.

\begin{lemma}
  \label{lem:break-op}
  Let $0 < p \leq r < 1$ be such that $(r^2, \Ip(r))$ does not lie
  on the convex minorant of $x \mapsto \Ip(\sqrt{x})$.
  Then there exists some $f \in \cW_0$ with $\norm{f}_\op > r$ and
  $\Ip(f) < \Ip(r)$.
\end{lemma}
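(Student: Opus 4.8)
The plan is to mimic the proof of Lemma~\ref{lem:break-count} but exploit the chain $\norm{f}_1 \leq \norm{f}_\op \leq \norm{f}_2$ from Lemma~\ref{lem:L1-op-L2}, which is the operator-norm analogue of Corollary~\ref{cor:reg-holder} with $d=2$. Since $(r^2,\Ip(r))$ does not lie on the convex minorant of $x\mapsto\Ip(\sqrt x)$, there exist $0\le r_1<r<r_2\le 1$ and $s\in(0,1)$ with $r^2 = s r_1^2 + (1-s) r_2^2$ and $s\Ip(r_1)+(1-s)\Ip(r_2) < \Ip(r)$, exactly as in the opening of that proof. I would then build a graphon $f_\e$ supported on three blocks: a large block $I_0 = [a,1-b]$ and two small blocks $I_1=[0,a]$, $I_2=[1-b,1]$, with $f_\e$ equal to $r$ on $I_0\times I_0$, $r_1$ on $(I_0\times I_1)\cup(I_1\times I_0)$, $r_2$ on $(I_0\times I_2)\cup(I_2\times I_0)$, and something convenient (say $r$, or $0$) on the remaining small corner blocks $I_i\times I_j$ with $i,j\in\{1,2\}$; their contribution is $O(\e^4)$ and irrelevant.

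The entropy computation is essentially identical to Lemma~\ref{lem:break-count}: with $a = s\e^2$ and $b = (1-s)\e^2 + \e^3$ one gets
\[
\Ip(f_\e) - \Ip(r) = 2(1-a-b)\e^2\big(s\Ip(r_1)+(1-s)\Ip(r_2)-\Ip(r) + (\Ip(r_2)-\Ip(r))\e\big) + O(\e^4),
\]
which is negative for all sufficiently small $\e>0$ by the strict inequality above. The step that genuinely differs is the lower bound on $\norm{f_\e}_\op$, since unlike subgraph density the operator norm is not computed by a simple multilinear integral. The natural route is a test-vector argument: take $u$ to be (a normalized version of) the indicator of $I_0$, perturbed slightly, and estimate $\ang{u, T_{f_\e} u}/\norm{u}_2^2$ from below. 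On $I_0\times I_0$ the kernel is the constant $r$, contributing roughly $r$; the off-diagonal blocks $I_0\times I_i$ carry the values $r_1,r_2$, and a first-order perturbation of $u$ in the directions of $\mathbf 1_{I_1},\mathbf 1_{I_2}$ picks up a second-order-in-$\e$ gain governed by $a r_1^2 + b r_2^2$ relative to $a r^2 + b r^2$. By the defining relation $ar_1^2 + br_2^2 = (s r_1^2 + (1-s) r_2^2)\e^2 + \e^3 r_2^2 = r^2\e^2 + \e^3 r_2^2 > r^2\e^2$, this perturbation strictly increases the Rayleigh quotient, giving $\norm{f_\e}_\op > r$ for small $\e$. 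Alternatively — and more cleanly — one can observe $\norm{f_\e}_2^2 = \int f_\e^2 = r^2 + 2a(r_1^2-r^2) + 2b(r_2^2-r^2) + O(\e^4) = r^2 + 2\e^3(r_2^2-r^2) + O(\e^4) > r^2$, but that only bounds $\norm{f_\e}_2$, not $\norm{f_\e}_\op$, so the test-vector computation seems unavoidable.

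The main obstacle is therefore making the Rayleigh-quotient estimate rigorous: one must choose the test vector $u = \mathbf 1_{I_0} + c_1\mathbf 1_{I_1} + c_2\mathbf 1_{I_2}$ with the right constants $c_i$ (of order $\e$), carefully expand $\ang{u,T_{f_\e}u}$ and $\norm u_2^2$ to the relevant order in $\e$, and verify the leading correction to $\norm{f_\e}_\op^2 - r^2$ is a positive multiple of $a r_1^2 + b r_2^2 - r^2(a+b) = \e^3(r_2^2 - r^2)$, i.e. strictly positive, while the $\Ip$ computation simultaneously stays negative — the two estimates use the same $a,b$ so one must confirm no sign conflict arises, which it does not because the $\Ip$ gain is $\Theta(\e^2)$ (with a negative constant) and dominates the $\Theta(\e^3)$ correction, whereas for $\norm{\cdot}_\op$ the $\Theta(\e^2)$ terms cancel by construction (that cancellation being exactly the relation $s r_1^2 + (1-s) r_2^2 = r^2$) and the surviving $\Theta(\e^3)$ term is positive. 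Once this is in place, combining $\norm{f_\e}_\op > r$ with $\Ip(f_\e) < \Ip(r)$ completes the proof.
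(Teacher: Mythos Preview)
Your plan is essentially the paper's own approach: the same block graphon $f_\e$, the identical entropy computation, and a test-vector lower bound on $\norm{f_\e}_\op$. The paper's execution of the last step is slightly cleaner than what you sketch. Rather than expanding the Rayleigh quotient, it takes the step function
\[
u = r\cdot\mathbf 1_{I_0} + (1-a-b)r_1\cdot\mathbf 1_{I_1} + (1-a-b)r_2\cdot\mathbf 1_{I_2}
\]
and verifies the \emph{pointwise} inequality $(T_{f_\e}u)(x) > r\,u(x)$ on each block, which immediately forces $\norm{f_\e}_\op > r$ with no asymptotic bookkeeping.

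One genuine slip in your sketch: the coefficients $c_i$ in $u = \mathbf 1_{I_0} + c_1\mathbf 1_{I_1} + c_2\mathbf 1_{I_2}$ must be of order $1$, not order $\e$. With $c_i = O(\e)$ the Rayleigh quotient is $r - (a+b)r + O(\e^3) < r$, since the cross terms $2c_i a r_i$ are then $O(\e^3)$ and cannot overcome the $O(\e^2)$ loss $-(a+b)r$. Optimizing in $c_i$ gives $c_i = r_i/r + O(\e^2)$ (consistent with the paper's choice after rescaling), and only then does the quotient come out as $r + r^{-1}\big(a r_1^2 + b r_2^2 - (a+b)r^2\big) + O(\e^4) = r + r^{-1}\e^3(r_2^2 - r^2) + O(\e^4) > r$, matching the key identity you correctly wrote down. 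With that fix your argument goes through and is equivalent to the paper's.
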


\begin{proof}
  Let $\e > 0$. Let $f_\e$ be the construction from the proof of Lemma~\ref{lem:break-count} with
  $d=2$, and define the parameters of that construction $r_1,r_2,s,a,b,I_0,I_1,I_2$ as given there.
  Having already demonstrated that $\Ip(f_\e)<\Ip(r)$ for any small enough $\epsilon > 0$,
  it remains to show
  that $\norm{f_\epsilon}_\op > r$.
  To this end, it suffices to exhibit a
  function $u \in L^2([0,1])$ such that $(T_{f_\epsilon}
  u)(x) > r u(x)$ for all $x \in [0,1]$. Let
  \[
  u(x) = \begin{cases} (1-a-b) r_1 & \text{if } x \in I_1\,, \\
                       (1-a-b) r_2 & \text{if } x \in I_2\,, \\
                       r           & \text{if } x \in I_0\,.
        \end{cases}
  \]
  Recall that $f_\epsilon(x,y)$ is $r$ except when $(x,y)\in (I_0\times I_i) \cup (I_i\times I_0)$ where it is $r_i$ for $i=1,2$.
  It now follows that for every $x \in I_1=[0,a]$
  \[
  (T_{f_\e} u)(x)
    = a (1-a-b) r_1 r + b (1-a-b) r_2 r + (1-a-b) r_1 r
    > (1 - a - b) r_1 r = r u(x)\,.
  \]
  Similarly, for any $x \in I_2=[1-b,1]$ we have
  \[
  (T_{f_\e} u)(x)
    >  (1-a-b)  r_2 r
    = r u(x)\,.
  \]
 Finally, if $x \in I_0=[a,1-b]$ then
  \begin{align*}
    (T_{f_\e} u)(x)
    &= a(1-a-b) r_1^2 + b (1-a-b) r_2^2 + (1-a-b) r^2
    = (1-a-b) (r^2 + ar_1^2 + br_2^2)\,.
  \end{align*}
  Plugging in the facts that $r^2=s r_1^2 + (1-s)r_2^2$ while $a=s \epsilon^2$ and $b=(1-s)\epsilon^2+\epsilon^3$, we get that
  \begin{align*}
    (T_{f_\e} u)(x) &
    = (1-\e^2 - \e^3)(r^2 + r^2 \e^2 + r_2^2 \e^3)
    = r^2 + (r_2^2-r^2) \e^3 + O(\e^4)
    > r^2 = r u(x)\,,
  \end{align*}
  where the strict inequality is valid for any sufficiently small $\epsilon>0$ since $r_2 > r$.

  Altogether, $(T_{f_\e} u)(x) > r u(x)$ for all $x\in[0,1]$ and so $\norm{f}_\op > r$, as required.
\end{proof}

\begin{proof}[\emph{\textbf{Proof of Theorem~\ref{thm:spectral-ldp}}}]
To prove Part~\eqref{it-thm2-1}, by
  Theorem~\ref{thm:variational} applied to the graph parameter
  $\norm{\cdot}_\op$ it suffices to show that the constant function
  $r$ is the unique element $f \in \cW_0$ minimizing $\Ip(f)$ subject
  to $\norm{f}_\op \geq r$. Indeed, by Lemma~\ref{lem:L1-op-L2},
  $\norm{f}_2 \geq \norm{f}_\op \geq r$. By Lemma~\ref{lem:Ld/L1-Ip} Part~\eqref{it-Ld/L1-Ip-Ld} and
  Lemma~\ref{lem:d2-boundary} we know that $\Ip(f) \geq \Ip(r)$, with equality if
  and only if $f$ is the constant function $r$.

For Part~\eqref{it-thm2-2}, similar to the proof of Part~\eqref{it-thm1-2} of Theorem~\ref{thm:count-ldp} in~\S\ref{sec:phase-subgraph}, Lemma~\ref{lem:break-op} implies that the set of minimizers
  of the variational problem~\eqref{eq:variational} is disjoint from
  the set of constant graphons. We then apply
  Theorem~\ref{thm:variational} to conclude the proof, with the phase boundary given
  by Lemma~\ref{lem:d2-boundary}.
\end{proof}

The behavior of the lower tails deviations in the spectral norm is similar to
that of the subgraph densities in Proposition~\ref{prop:sidorenko-lower}, where replica symmetry
is exhibited everywhere (no phase transition).

\begin{proposition}
Let $0 < r \leq p < 1$. Let $G_n\sim \cG(n,p)$ be the Erd\H{o}s-R\'enyi random graph and let $\lambda_1(G_n)$ denote the largest
  eigenvalue of its adjacency matrix. Then
\[
\lim_{n \to \infty} \frac{1}{\binom{n}{2}} \log \P\paren{\lambda_1(G_n)
  \leq r}
  = - \Ip(r)
\]
and furthermore, for every $\e > 0$ there is some $C = C(\e,p,r) > 0$
such that for all $n$,
\[
\P\cond{\delta_\square(G_n, r) < \e}{\lambda_1(G_n) \leq nr}
\geq 1- e^{-C n^2} \, .
\]
\end{proposition}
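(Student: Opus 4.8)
The plan is to mirror the proof of Theorem~\ref{thm:spectral-ldp} Part~\eqref{it-thm2-1}, replacing the upper‑tail constraint by a lower‑tail one, exactly the way Proposition~\ref{prop:sidorenko-lower} relates to Theorem~\ref{thm:count-ldp} Part~\eqref{it-thm1-1}. First recall from Lemma~\ref{lem:eigen-op} that $\norm{\cdot}_\op$ is a nice graph parameter whose value on a graphon of the form $f^G$ is $\lambda_1(G)/n$; by the observation immediately following Theorem~\ref{thm:variational} (applied to $-\tau$), $-\norm{\cdot}_\op$ is then also a nice graph parameter, and the event $\{\lambda_1(G_n) \leq nr\}$ is precisely $\{-\norm{f^{G_n}}_\op \geq -r\}$. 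Since the all‑zero graphon lies in $\cW_0$ and has operator norm $0$, we have $\max(-\norm{\cdot}_\op) = 0 > -r$, so Theorem~\ref{thm:variational} applies with $\tau = -\norm{\cdot}_\op$ and $t = -r$, reducing the claim to showing that the constant graphon $r$ is the \emph{unique} minimizer of $\Ip(f)$ over $f \in \cW_0$ subject to $\norm{f}_\op \leq r$.

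To establish this I would invoke Lemma~\ref{lem:L1-op-L2}, which gives $\norm{f}_1 \leq \norm{f}_\op \leq r$, followed by Lemma~\ref{lem:Ld/L1-Ip} Part~\eqref{it-Ld/L1-Ip-L1} in the degenerate case $d=1$: there $x \mapsto \Ip(x^{1/d}) = \Ip(x)$ is itself convex (hence its own convex minorant), so $(r,\Ip(r))$ lies on that minorant automatically, and when $r < p$ and $\norm{f}_1 \leq r$ the lemma yields $\Ip(f) \geq \Ip(r)$ with equality if and only if $f \equiv r$. Equivalently, this is just Jensen's inequality $\Ip(f) = \int \Ip(f(x,y))\,dxdy \geq \Ip\!\paren{\int f} = \Ip(\norm{f}_1)$ together with the fact that $\Ip$ is decreasing on $[0,p]$. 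The boundary case $r = p$ is immediate and should be dispatched separately: then $\Ip(r) = 0$, and $\Ip(f) = \int \Ip(f(x,y))\,dxdy = 0$ with $\Ip \geq 0$ forces $f \equiv p = r$ almost everywhere, so the minimizer is again unique.

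Finally, feeding the unique minimizer $f^* \equiv r$ into Theorem~\ref{thm:variational} delivers both $\lim_{n\to\infty} \frac{1}{\binom{n}{2}} \log \P\paren{\lambda_1(G_n) \leq nr} = -\Ip(r)$ and, for each $\e > 0$, a constant $C = C(\e,p,r) > 0$ with $\P\cond{\delta_\square(G_n,r) < \e}{\lambda_1(G_n) \leq nr} \geq 1 - e^{-Cn^2}$ for all $n$. I do not anticipate a genuine obstacle here: the argument reduces entirely to the two norm comparisons already proved, and the only points requiring a word of care are checking that $t = -r$ lies strictly below $\max(-\norm{\cdot}_\op) = 0$ so that Theorem~\ref{thm:variational} is applicable, and the $r = p$ degeneracy, both handled above.
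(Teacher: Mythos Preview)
Your proposal is correct and follows essentially the same route as the paper: apply Theorem~\ref{thm:variational} with $\tau=-\norm{\cdot}_\op$, use Lemma~\ref{lem:L1-op-L2} to get $\norm{f}_1\leq\norm{f}_\op\leq r$, and then invoke Lemma~\ref{lem:Ld/L1-Ip} Part~\eqref{it-Ld/L1-Ip-L1} with $d=1$. Your explicit treatment of the boundary case $r=p$ (where Lemma~\ref{lem:Ld/L1-Ip} as stated requires strict inequality) and the check that $t=-r<\max(-\norm{\cdot}_\op)$ are minor points of care the paper leaves implicit.
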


\begin{proof}
 Applying Theorem~\ref{thm:variational} with $\tau =
  -\norm{\cdot}_\op$, it suffices to show that the constant function
  $r$ is the unique element $f \in \cW_0$ minimizing $\Ip(f)$ subject
  to $\norm{f}_\op \leq r$.  By Lemma~\ref{lem:L1-op-L2}, if $f \in \cW_0$
  with $\norm{f}_\op \leq r$ then $\norm{f}_{1} \leq
  \norm{f}_\op \leq r$. It now follows from Lemma~\ref{lem:Ld/L1-Ip} Part~\eqref{it-Ld/L1-Ip-L1}
  (used with $d=1$ bearing in mind that $\Ip(x)$ is convex) that $\Ip(f) \geq
  \Ip(r)$ with equality if and only if $f\equiv r$.
\end{proof}

\section{Exponential random graph models} \label{sec:exp}

Let us review the tools developed by Chatterjee and
Diaconis~\cite{CD} to analyze exponential random
graphs. Define
\[
h(x) := x \log x + (1-x) \log(1-x) \quad\mbox{ for $x \in [0,1]$}\, ,
\]
and for any graphon $f \in \cW_0$ let
\[
h(f) := \int_{[0,1]^2} h(f(x,y)) \ dxdy \, .
\]
The following result from \cite[Thm.~3.1 and Thm.~3.2]{CD} reduces the
analysis of the exponential random graph model in the large $n$ limit
to a variational problem. It was proven with the help of the theory developed
by Chatterjee and Varadhan~\cite{CV11} for large deviations in random graphs.

\begin{theorem}[Chatterjee and Diaconis~\cite{CD}]
  \label{thm:CD-var}
  Let $\tau \colon \wt \cW_0 \to \R$ be a bounded continuous
  function. Let $Z_n = \sum_G \exp\paren{\binom{n}{2} \tau(G)}$ where
  the sum is taken over all $2^{\binom{n}{2}}$ simple graphs $G$ on
  $n$ labeled vertices. Let $\psi_n = \binom{n}{2}^{-1} \log
  Z_n$. Then
  \begin{equation} \label{eq:CD-var}
  \psi := \lim_{n \to \infty} \psi_n = \sup_{f \in \wt \cW_0} (\tau(f) - h(f))\,,
  \end{equation}
  and the set $F^* \subset \wt \cW_0$ of maximizers of this
  variational problem is nonempty and compact.

  Let $G_n$ be a random graph on $n$ vertices drawn from the exponential
  random graph model defined by $\tau$, i.e., with distribution
  $Z_n^{-1} \exp \paren{\binom{n}{2} \tau(\cdot)}$. Then for every
  $\eta > 0$ there exists $C = C(\tau,\eta) > 0$ such that for all $n$,
  \[
  \P\paren{\delta_\square (G_n, F^*) > \eta)} \leq e^{- C n^2}
    \, .
  \]
\end{theorem}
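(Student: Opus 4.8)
The plan, essentially following \cite{CD}, is to reduce everything to the large deviation principle of Theorem~\ref{thm:ldp} at $p=\tfrac12$ by exploiting that $\cG(n,\tfrac12)$ is the uniform measure on labeled graphs, and then to apply Varadhan's integral lemma. Since each of the $2^{\binom n2}$ graphs $G$ on $n$ labeled vertices has probability $2^{-\binom n2}$ under $\P_{n,1/2}$, writing $\mathbb E_{1/2}$ for the associated expectation we get
\[
Z_n = 2^{\binom n2}\,\mathbb E_{1/2}\!\left[\exp\!\left(\tbinom n2\,\tau(G)\right)\right],
\qquad\text{so}\qquad
\psi_n = \log 2 + \tbinom n2^{-1}\log\mathbb E_{1/2}\!\left[e^{\binom n2\tau(G)}\right].
\]
A one-line computation from~\eqref{eq:Ip} gives $h_{1/2}(x)=h(x)+\log 2$ for $x\in[0,1]$, hence $h_{1/2}(f)=h(f)+\log 2$ for $f\in\cW_0$, where $h_{1/2}(f):=\int h_{1/2}(f(x,y))\,dxdy$ is the rate function of Theorem~\ref{thm:ldp} at $p=\tfrac12$; it is a good rate function, since $(\wt\cW_0,\delta_\square)$ is compact \cite{LS06} and $h_{1/2}$ is lower-semicontinuous on it as the integral of a convex function (cf.~\cite[Lem.~2.1]{CV11}). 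As $\tau$ is bounded and continuous, Varadhan's integral lemma (see, e.g.,~\cite{DZ}) yields $\lim_n\tbinom n2^{-1}\log\mathbb E_{1/2}[e^{\binom n2\tau(G)}]=\sup_{f\in\wt\cW_0}(\tau(f)-h_{1/2}(f))$, and combining the displays gives $\psi=\lim_n\psi_n=\sup_f(\tau(f)-h(f))$, i.e.~\eqref{eq:CD-var}. Finally, $\tau-h$ is upper-semicontinuous (continuous minus lower-semicontinuous) on the compact space $\wt\cW_0$, so it attains its supremum $\psi$ and $F^*=\{f:\tau(f)-h(f)=\psi\}$ is a nonempty closed, hence compact, subset of $\wt\cW_0$.

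For the concentration bound, fix $\eta>0$ and put $F_\eta:=\{f\in\wt\cW_0:\delta_\square(f,F^*)\ge\eta\}$, a closed (hence compact) set disjoint from $F^*$, since $\delta_\square(\cdot,F^*)$ is continuous and vanishes on $F^*$. The event $\{\delta_\square(G_n,F^*)>\eta\}$ is precisely the set of graphs whose graphons lie in $F_\eta$, so
\[
\P\bigl(\delta_\square(G_n,F^*)>\eta\bigr)=\frac{2^{\binom n2}\,\mathbb E_{1/2}\!\left[e^{\binom n2\tau(G)}\,\one\{f^G\in F_\eta\}\right]}{Z_n}\,.
\]
I would bound the numerator with the upper-bound half of Varadhan's lemma for the closed set $F_\eta$: covering the compact $F_\eta$ by finitely many closed cut-metric balls on each of which $\tau$ oscillates by at most $\delta$ (uniform continuity of $\tau$ on $\wt\cW_0$), applying the LDP upper bound of Theorem~\ref{thm:ldp} ball by ball, and letting $\delta\to0$, one obtains
\[
\limsup_{n\to\infty}\tbinom n2^{-1}\log\mathbb E_{1/2}\!\left[e^{\binom n2\tau(G)}\,\one\{f^G\in F_\eta\}\right]\le\sup_{f\in F_\eta}\bigl(\tau(f)-h_{1/2}(f)\bigr)=\sup_{f\in F_\eta}\bigl(\tau(f)-h(f)\bigr)-\log 2\,.
\]

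To conclude I would extract a strictly negative exponent. Since $\tau-h$ is upper-semicontinuous it attains its maximum $M:=\sup_{f\in F_\eta}(\tau(f)-h(f))$ on the compact set $F_\eta$, and as $F_\eta\cap F^*=\emptyset$ we have $M<\psi$. Setting $c:=(\psi-M)/3>0$, the first part gives $Z_n\ge e^{\binom n2(\psi-c)}$ for all large $n$, while the last display bounds the numerator by $2^{\binom n2}e^{\binom n2(M+c-\log 2)}=e^{\binom n2(M+c)}$ for all large $n$; dividing, $\P(\delta_\square(G_n,F^*)>\eta)\le e^{-c\binom n2}\le e^{-cn^2/4}$ for all large $n$ (using $\binom n2\ge n^2/4$ for $n\ge 2$), and the finitely many small values of $n$ are absorbed into the constant. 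This yields the claim with $C=c/4$.

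The main obstacle is the concentration statement rather than~\eqref{eq:CD-var}: one must run Varadhan's lemma twice --- the full version for $\psi_n\to\psi$ and the one-sided closed-set version for the restricted expectation --- on the abstract compact metric space $(\wt\cW_0,\delta_\square)$, and, crucially, establish the strict gap $\sup_{F_\eta}(\tau-h)<\psi$. That strict inequality is exactly where compactness of $\wt\cW_0$ (so that $\tau-h$ attains its maximum on $F_\eta$), upper-semicontinuity of $\tau-h$, and the characterization of $F^*$ as the full set of maximizers are all used; everything else is routine manipulation of exponential rates.
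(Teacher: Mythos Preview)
The paper does not give its own proof of this theorem; it simply cites it as \cite[Thm.~3.1 and Thm.~3.2]{CD}, noting only that it was proved using the large deviation principle of~\cite{CV11}. Your argument is correct and is precisely the Chatterjee--Diaconis approach: rewrite the partition function as a tilted expectation under $\cG(n,\tfrac12)$, apply Varadhan's integral lemma with the rate function $h_{1/2}=h+\log 2$ from Theorem~\ref{thm:ldp}, and for the concentration statement use the one-sided (closed-set) Varadhan bound together with the strict gap $\sup_{F_\eta}(\tau-h)<\psi$ obtained from upper-semicontinuity on a compact space. One cosmetic point: the event $\{\delta_\square(G_n,F^*)>\eta\}$ is \emph{contained in} (not ``precisely'') $\{f^{G_n}\in F_\eta\}$ since $F_\eta$ uses the weak inequality $\ge\eta$, but this is the direction you need and does not affect the argument.
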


We say that the exponential random graph model has \emph{replica
  symmetry} if the set of maximizers $F^*$ for the variational problem
$\tau(f) - h(f)$ contains only constant functions, and we say that it
has \emph{replica symmetry breaking} if no constant function is a
maximizer. Intuitively, Theorem~\ref{thm:CD-var} implies that when
there is replica symmetry, for large $n$, the random graph behaves
like an Erd\H{o}s-R\'enyi random graph (or a mixture of
Erd\H{o}s-R\'enyi random graphs), while this is not the case when
there is broken symmetry. More precisely we have the following result
(see \cite[Thm.~6.2]{CD}).

\begin{corollary}
  \label{cor:CD-var-break}
  Continuing with Theorem~\ref{thm:CD-var}. Let $\ER \subset \wt \cW_0$
  be the set of constant functions. If $F^* \cap \ER = \emptyset$ then there exist $C, \epsilon > 0$ such that
  for all $n$,
  \[
  \P(\delta_\square(G_n,\ER) > \e) \geq 1 -
  e^{-Cn^2} \, .
  \]
\end{corollary}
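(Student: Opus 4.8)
The plan is to reduce the statement to the quantitative concentration bound already supplied by Theorem~\ref{thm:CD-var}, together with an elementary separation argument between two disjoint compact sets. First I would record that $\ER$, the set of constant graphons, is a compact subset of $(\wt\cW_0,\delta_\square)$: the assignment $c\mapsto$ (the constant-$c$ graphon) is an isometry of $[0,1]$ onto $\ER$, since $\delta_\square(c_1,c_2)=\abs{c_1-c_2}$ for constants, so $\ER$ is the continuous image of a compact interval. By Theorem~\ref{thm:CD-var} the maximizer set $F^*\subset\wt\cW_0$ is nonempty and compact, and by hypothesis $F^*\cap\ER=\emptyset$; two disjoint compact subsets of a metric space are at positive distance, so $\rho:=\delta_\square(F^*,\ER)>0$.

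Next I would set $\e:=\rho/3$ and observe the deterministic inclusion
$\{\delta_\square(G_n,\ER)\leq\e\}\subseteq\{\delta_\square(G_n,F^*)>\e\}$.
Indeed, for any $f\in F^*$ the triangle inequality for $\delta_\square$ gives $\delta_\square(f,\ER)\leq\delta_\square(f,G_n)+\delta_\square(G_n,\ER)$, and taking the infimum over $f\in F^*$ yields $\rho\leq\delta_\square(G_n,F^*)+\delta_\square(G_n,\ER)$; hence on the event $\delta_\square(G_n,\ER)\leq\e$ we get $\delta_\square(G_n,F^*)\geq\rho-\e=2\e>\e$. Applying Theorem~\ref{thm:CD-var} with $\eta=\e$ produces a constant $C=C(\tau,\e)>0$ with $\P(\delta_\square(G_n,F^*)>\e)\leq e^{-Cn^2}$ for all $n$, and therefore
\[
\P\paren{\delta_\square(G_n,\ER)>\e}\;\geq\;1-\P\paren{\delta_\square(G_n,F^*)>\e}\;\geq\;1-e^{-Cn^2}\,,
\]
which is exactly the claimed bound (with the same $\e$ and $C$).

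There is essentially no obstacle in this argument; the only item demanding a little care is that $\delta_\square$ is a genuine metric only on $\wt\cW_0$ (a pseudometric on $\cW_0$), so the separation and triangle-inequality steps should be carried out in $\wt\cW_0$, where $G_n$, $F^*$, and $\ER$ all reside. The uniformity of $C$ in $n$ is inherited verbatim from Theorem~\ref{thm:CD-var}, so no further work is needed there.
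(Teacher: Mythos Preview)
Your argument is correct. The paper does not supply its own proof of this corollary, deferring instead to \cite[Thm.~6.2]{CD}; however, the identical disjoint-compacts argument you give is exactly what the paper invokes elsewhere (see the proof of Theorem~\ref{thm:count-ldp}\eqref{it-thm1-2}, where $\delta_\square(F^*,\ER)>0$ is obtained from compactness and then fed into the concentration bound with $\e=\delta_\square(F^*,\ER)/2$). Your choice of $\e=\rho/3$ versus the paper's $\rho/2$ is immaterial.
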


To prove Theorem~\ref{thm:exp} using the above tools, we need to analyze
the following variational problem:
\begin{equation}
  \label{eq:exp-var}
  \sup_{f \in \wt\cW_0} (\beta_1 t(K_2, f) + \beta_2 t(H,
    f)^\alpha - h(f)) \, .
\end{equation}
Here is the main result of this section, from which
Theorem~\ref{thm:exp} follows by the results above.

\begin{theorem}
  \label{thm:exp-break}
  Let $H$ be a $d$-regular graph ($d\geq 2$) and fix $\beta_1
  \in \R$ and  $\alpha,\beta_2 > 0$.
  Let $\cE $ denote the corresponding exponential random
  graph model on $n$ labeled vertices as specified in~\eqref{eq-exp-model-H-alpha}.
  \begin{enumerate}[(a)]
  \item\label{item:exp-break-a} If $\alpha \geq d/e(H)$, then $\cE$ has replica symmetry. Moreover, there exists a set $\Gamma\subset \R^2$ of the form
    \[
    \Gamma =
    \{(\beta_1, \varphi(\beta_1)) : \beta_1 < \log(e(H)\alpha - 1) -
    \tfrac{e(H)\alpha}{e(H)\alpha - 1}\} \subset \R^2\quad\mbox{ for some function $\varphi:\R\to\R$}
    \]
   such that when $(\beta_1, \beta_2) \in \R
    \x (0,\infty)\setminus \Gamma$ the set of maximizers of the variational
    problem~\eqref{eq:exp-var} is a single constant function, and when
    $(\beta_1, \beta_2) \in \Gamma$ the set of maximizers  consists of
    exactly two distinct constant functions.
  \item\label{item:exp-break-b} If $0 < \alpha < d/e(H)$ and $\beta_1 \geq \log(d-1) -
    d/(d-1)$ then $\cE$ has replica symmetry. Moreover, the
    variational problem~\eqref{eq:exp-var} is maximized by a unique
    constant function.
  \item\label{item:exp-break-c} If $0 < \alpha < d/e(H)$ and $\beta_1 < \log(d-1) - d/(d-1)$
    then there exists an open interval of values $\beta_2 > 0$ for
    which $\cE$ has broken symmetry, i.e., the set of maximizers of the
    variational problem~\eqref{eq:exp-var} does not contain any constant function.
    Furthermore, this open interval can be taken to
    be $(\ul{\smash{\beta}}_2,\ol\beta_2)$ with
    $\ul{\smash{\beta}}_2 = \ul u^{1-e(H)\alpha} \Ip'(\ul u)/(e(H)\alpha)$ and
    $\ol\beta_2 = \ol u^{1-e(H)\alpha}\Ip'(\ol u)/(e(H)\alpha)$, where $(\ul
    u^d, \Ip(\ul u))$ and $(\ol u^d, \Ip(\ol u))$ are the two points
    where the lower common tangent of $x \mapsto \Ip(x^{1/d})$ touches
    the curve for $p = 1/(1+e^{-\beta_1})$.
  \end{enumerate}
\end{theorem}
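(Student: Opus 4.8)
The plan is to follow the route of the proof of Theorem~\ref{thm:count-ldp}: reduce the variational problem~\eqref{eq:exp-var} to a one-dimensional optimization and read off the answer from the geometry of $x\mapsto\Ip(x^{1/d})$; Theorem~\ref{thm:exp} then follows from Theorem~\ref{thm:exp-break} through Theorem~\ref{thm:CD-var} and Corollary~\ref{cor:CD-var-break}. Throughout set $p:=(1+e^{-\beta_1})^{-1}$ and $k:=e(H)\alpha$. Since $\beta_1=\log\tfrac{p}{1-p}$, a direct computation gives $\beta_1 x-h(x)=-\Ip(x)-\log(1-p)$ on $[0,1]$, hence $\beta_1 t(K_2,f)-h(f)=-\Ip(f)-\log(1-p)$ for all $f\in\cW_0$; thus~\eqref{eq:exp-var} equals $\sup_f\paren{\beta_2 t(H,f)^\alpha-\Ip(f)}-\log(1-p)$ and has the same set of maximizers as $\Phi(f):=\beta_2 t(H,f)^\alpha-\Ip(f)$, which is upper semicontinuous (so attains its maximum $M^\star$) since $t(H,\cdot)$ is continuous and $\Ip(\cdot)$ lower semicontinuous on the compact $\wt\cW_0$. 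Write $q(u):=\Phi(\text{const }u)=\beta_2 u^k-\Ip(u)$ and $m^\star:=\max_u q(u)$. Setting $w=\norm{f}_d$ and letting $\hat\psi_d$ be the convex minorant of $\psi_d(x):=\Ip(x^{1/d})$, Corollary~\ref{cor:reg-holder} and the Jensen step in the proof of Lemma~\ref{lem:Ld/L1-Ip} (which gives $\Ip(f)\ge\hat\psi_d(w^d)$) yield $\Phi(f)\le\beta_2 w^k-\hat\psi_d(w^d)=\beta_2 x^{k/d}-\hat\psi_d(x)$ with $x:=w^d$; as $\psi_d(u^d)=\Ip(u)$, replacing $\hat\psi_d$ by $\psi_d$ recovers $q$, so $M^\star\ge m^\star$ always.

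\emph{Part (a): $\alpha\ge d/e(H)$, i.e.\ $k/d\ge1$.} On the interval where $\hat\psi_d$ is affine, $x\mapsto\beta_2 x^{k/d}-\hat\psi_d(x)$ is convex (convex power minus affine), hence maximized at an endpoint, where $\hat\psi_d=\psi_d$; elsewhere $\hat\psi_d=\psi_d$. So $\max_x(\beta_2 x^{k/d}-\hat\psi_d(x))=m^\star$, whence $M^\star=m^\star$ and any maximizer of $\Phi$ meets equality throughout the bound; tracing the equality cases (equality in Corollary~\ref{cor:reg-holder}, $f^d$ a.e.\ in $\{\psi_d=\hat\psi_d\}$, and $\hat\psi_d$ affine on the range of $f^d$) and using that $\hat\psi_d$ is affine on a single interval and strictly convex off it forces $f$ a.e.\ constant. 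The number of constant maximizers equals the number of global maxima of $q$: writing $z=u^k$ gives $q(u)=\beta_2 z-\Ip(z^{1/k})$, which by the shape of $x\mapsto\Ip(x^{1/m})$ with $m=k\ge2$ (Lemma~\ref{lem:hp-shape}) is strictly concave in $z$ exactly when $\beta_1\ge\log(k-1)-\tfrac{k}{k-1}$, forcing a unique maximizer for every $\beta_2>0$; for $\beta_1<\log(k-1)-\tfrac{k}{k-1}$ the curve has a single lower bitangent, and a Maxwell-type comparison of the two local maxima of $q$ shows it has a unique global maximizer for every $\beta_2>0$ except $\beta_2=\varphi(\beta_1):=$ the slope of that bitangent, where it has exactly two. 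This gives $\Gamma=\{(\beta_1,\varphi(\beta_1)):\beta_1<\log(k-1)-\tfrac{k}{k-1}\}$ as stated.

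\emph{Parts (b) and (c): $0<\alpha<d/e(H)$, i.e.\ $k/d<1$.} Put $\rho(u):=\Ip'(u)/(k\,u^{k-1})$, so $q'(u)=k\,u^{k-1}(\beta_2-\rho(u))$, $\rho<0<\beta_2$ on $(0,p]$, and $\rho'(u)\propto\tfrac1{1-u}-(k-1)\Ip'(u)$. If $\beta_1\ge\log(d-1)-\tfrac{d}{d-1}$ then $\psi_d$ is convex (Lemma~\ref{lem:hp-shape}), so $\hat\psi_d=\psi_d$, the bound becomes $\Phi(f)\le q(w)\le m^\star$, equality in Jensen forces $f$ constant, and — since convexity of $\psi_d$ gives $\tfrac1{1-u}\ge(d-1)\Ip'(u)>(k-1)\Ip'(u)$ on $(p,1)$ (using $k<d$) — $\rho$ is strictly increasing there, so $q$ has a unique maximizer: this is part (b). If $\beta_1<\log(d-1)-\tfrac{d}{d-1}$ then $\psi_d$ is not convex, with a unique pair $\ul u<\ol u$ in $(p,1)$ at which its lower common tangent touches, and $\ul\beta_2=\rho(\ul u)$, $\ol\beta_2=\rho(\ol u)$. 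For every $u\in(\ul u,\ol u)$ the point $(u^d,\Ip(u))$ lies strictly above that tangent, so Lemma~\ref{lem:break-count} gives $f\in\cW_0$ with $t(H,f)>u^{e(H)}$ and $\Ip(f)<\Ip(u)$, hence $\Phi(f)>q(u)$ for all $\beta_2>0$; therefore $M^\star>\sup_{u\in(\ul u,\ol u)}q(u)=\max_{u\in[\ul u,\ol u]}q(u)$, so symmetry breaks once the global maximizer of $q$ lies in $(\ul u,\ol u)$. Since $\rho'>0$ at $\ul u$ and $\ol u$ (as $\tfrac1{1-u}\ge(d-1)\Ip'(u)>(k-1)\Ip'(u)$ there), for $\beta_2\in(\ul\beta_2,\ol\beta_2)$ the sign analysis of $\rho-\beta_2$ together with the sharp shape of $\psi_d$ (Lemma~\ref{lem:hp-shape}) places every local — hence the global — maximizer of $q$ inside $(\ul u,\ol u)$, and in particular $\ul\beta_2<\ol\beta_2$; this is part (c) with the advertised open interval.

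\emph{Where the difficulty lies.} The crux is the last step of part (c): when $1<k<d$ the map $\rho$ need not be monotone on $(p,1)$ and $q$ can carry two local maxima, so one must show that \emph{both} land in the window $(\ul u,\ol u)$ for precisely $\beta_2\in(\ul\beta_2,\ol\beta_2)$ — this rests on the sharp description of $x\mapsto\Ip(x^{1/m})$ (its second derivative changes sign exactly twice, at points strictly interior to the tangency window) provided by Lemma~\ref{lem:hp-shape}. A secondary, more routine point is the equality analysis for Corollary~\ref{cor:reg-holder} and for the Jensen step — needed to exclude non-constant maximizers in parts (a) and (b), and most delicate in the boundary case $\alpha=d/e(H)$, where one uses that the tangency points lie strictly inside $(0,1)$.
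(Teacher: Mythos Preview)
Your proposal is correct and follows the same overall strategy as the paper: absorb $\beta_1$ into $\Ip$ via $p=(1+e^{-\beta_1})^{-1}$, bound $t(H,f)$ by Corollary~\ref{cor:reg-holder}, and invoke Lemma~\ref{lem:break-count} for symmetry breaking. Two tactical differences are worth noting.

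In part~(a) the paper takes a shorter route. Since $e(H)\alpha\ge d$, monotonicity of $L^p$-norms on a probability space gives $\norm{f}_d^{e(H)\alpha}\le\norm{f}_{e(H)\alpha}^{e(H)\alpha}=\int f^{e(H)\alpha}$, so directly
\[
\beta_2\,t(H,f)^\alpha-\Ip(f)\;\le\;\int\bigl(\beta_2 f(x,y)^{e(H)\alpha}-\Ip(f(x,y))\bigr)\,dx\,dy\;\le\;m^\star,
\]
bypassing the convex minorant of $\psi_d$ entirely. This makes the equality analysis considerably cleaner (the paper handles the two-maximizer case via the equality condition in the generalized H\"older inequality from~\cite{Fin92}). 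Your detour through ``$\beta_2 x^{k/d}$ minus affine is convex'' is valid but more laborious.

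In part~(c) the paper establishes that the one-dimensional optimizer $u^*$ lies in $(\ul u,\ol u)$ for $\beta_2\in(\ul\beta_2,\ol\beta_2)$ by invoking Lemma~\ref{lem:hp-d} (the strict nesting $\cB_{e(H)\alpha}\subsetneq\cB_d$): this places $\ul u,\ol u$ on the strictly convex part of the convex minorant of $\psi_k$, whence $\ul\beta_2=\hat\psi_k'(\ul u^k)<\hat\psi_k'(\ol u^k)=\ol\beta_2$ and the slope interpretation of $\beta_2$ finishes. Your $\rho$-analysis is equivalent, but the step you attribute to Lemma~\ref{lem:hp-shape} alone actually needs one more observation: wherever $\psi_d''\ge 0$ and $u>p$ one has $\tfrac{1}{1-u}\ge(d-1)\Ip'(u)>(k-1)\Ip'(u)$, so $\psi_k''>0$ there. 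This (which you already used in part~(b)) shows the inflection points of $\psi_k$ sit inside those of $\psi_d$, hence inside $(\ul u,\ol u)$; then $\rho$ is monotone on $(0,\ul u]$ and $[\ol u,1)$, giving both the location of all local maxima and the inequality $\ul\beta_2<\ol\beta_2$.
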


When restricted only to constant functions in $\wt \cW_0$, the
variational problem~\eqref{eq:exp-var} becomes the one-dimensional optimization problem
\begin{equation}
  \label{eq:exp-u-op}
  \sup_{0 \leq u \leq 1} (\beta_1 u + \beta_2 u^{e(H)\alpha} -
    h(u)) \, .
\end{equation}
Note that for both \eqref{eq:exp-var} and \eqref{eq:exp-u-op} the
supremum is in fact a maximum due to compactness. Let $u^*$ be the maximizer for
\eqref{eq:exp-u-op}. When there is
replica symmetry, the maximum values attained in \eqref{eq:exp-var}
and \eqref{eq:exp-u-op} are equal, and the exponential random graph
behaves like an Erd\H{o}s-R\'enyi random graph with edge density
$u^*$. It is possible that there are two distinct maximizers $u^*$, in which
case the model behaves like a (possibly trivial) distribution over two
separate Erd\H{o}s-R\'enyi models. In the work of Chatterjee and Diaconis~\cite{CD}, where
the $\alpha = 1$ case was considered, it was shown that $u^*$ as a
function of $(\beta_1, \beta_2)$ experiences a discontinuity
across a curve in the parameter space. Radin
and Yin~\cite{RY} later showed that (when $\alpha = 1$) the limiting free energy density
$\psi$ from~\eqref{eq:CD-var}, as a function in the parameter space $(\beta_1, \beta_2)$, is
analytic except on a first order phase transition curve ending in a
critical point with second order phase transition. (See Fig.~\ref{fig:betaphase} in \S\ref{sec:intro} for a
plot of the location of the discontinuity in the $(\beta_1,
\beta_2)$-phase diagram.)

Here we focus less on the discontinuity of $u^*$ and more on symmetry
breaking. Nevertheless we shall start our analysis by giving a simple geometric interpretation
of the discontinuity of $u^*$.

By definition,
\[
\Ip(x) = h(x) - x \log \frac{p}{1-p} - \log(1-p) \, .
\]
Setting
\[
p = \frac{1}{1 + e^{-\beta_1}} \, ,
\]
so that $\beta_1 = \log \frac{p}{1-p}$, we absorb the linear term in
\eqref{eq:exp-var} and \eqref{eq:exp-u-op} into the entropy term, at
which point these two optimization problems respectively become
\begin{equation}
  \label{eq:exp-var-p}
  \sup_{f \in \wt\cW_0} (\beta_2 t(H, f)^\alpha - \Ip(f) - \log(1-p))
\end{equation}
and
\begin{equation}
\label{eq:exp-u-op-p}
  \sup_{0 \leq u \leq 1} (\beta_2 u^{e(H)\alpha} - \Ip(u) - \log(1-p)) \, .
\end{equation}
By a change of variables $u = x^{1/(e(H)\alpha)}$ in
\eqref{eq:exp-u-op-p} we get the equivalent optimization problem
\begin{equation}
\label{eq:exp-u-op-px}
  \sup_{0 \leq x \leq 1} (\beta_2 x - \Ip(x^{1/(e(H)\alpha)}) - \log(1-p)) \, .
\end{equation}
Observe that $x = x^*$ maximizes \eqref{eq:exp-u-op-px} iff the
tangent to the curve defined by $x \mapsto \Ip(x^{1/(e(H)\alpha)})$ at
$x = x^*$ has slope $\beta_2$ and lies below the curve. Thanks to
Lemma~\ref{lem:hp-shape} from the appendix, we know that $x \mapsto
\Ip(x^{1/\gamma})$ is convex if $0 < \gamma \leq 1$ or if
\begin{equation} \label{eq:exp-pc}
\gamma > 1 \quad\text{and}\quad p \geq
p_0(\gamma) := \frac{\gamma-1}{\gamma-1 + e^{\gamma/(\gamma-1)}} \, .
\end{equation}
Otherwise, $\Ip(x^{1/\gamma})$ has exactly two
inflection points to the right of $x = p^\gamma$, so that the curve starts
convex, becomes concave, and finally turns convex again. In addition,
$\Ip(x^{1/\gamma})$ has an infinite slope at both endpoints. For
any $\beta_2 \in \R$, there is a unique lower tangent of slope
$\beta_2$ to the curve $\Ip(x^{1/(e(H)\alpha)})$, touching the curve
at $x = x^* = (u^*)^{e(H)\alpha}$. As $\beta_2$ varies, $u^*$ increases
continuously with $\beta_2$, except in the situation where the curve of
$\Ip(x^{1/(e(H)\alpha)})$ is not convex and $\beta_2$ is the slope of
the unique lower tangent that touches the curve at two points. In that
case,~\eqref{eq:exp-u-op-p} is optimized at two distinct values of
$u$, denoted by $0<\ul u < \ol u<1$, and as $\beta_2$ increases through this critical point, $u^*$
jumps over the interval $(\ul u, \ol u)$ corresponding to the part of the curve lying above the convex minorant,
then increases continuously afterwards. When $\Ip(x^{1/(e(H)\alpha)})$
is convex, this jump does not occur. See Fig.~\ref{fig:u^*-jump} for an illustration of this process
(the function $\Ip(x^{1/(e(H)\alpha)})$ is plotted not-to-scale in
order to highlight its features). The uniqueness of $u^*$ is stated
below as a lemma.
\begin{figure}
\begin{center}
  \begin{tikzpicture}[font=\small]
    \node[anchor=south west] (plot) at (0,0)
    {\includegraphics{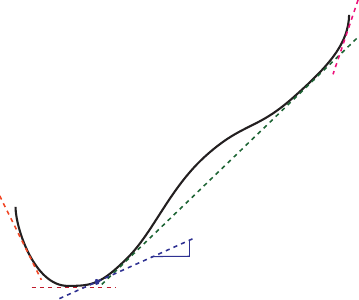}};
    \begin{scope}[x={(plot.south east)},y={(plot.north west)}]
    \node at (.7,.9) {$\Ip(x^{1/(e(H)\alpha)})$};
    \draw[-latex] (.25,.45) node[fill=white] {$((u^*)^{e(H)\alpha}, \Ip(u))$} to (.28,.1);
    \node at (.57,.18) {$\beta_2$};
    \draw (1,.2) node[fill=white,font=\footnotesize,text width=8em]
    {The critical slope $\beta_2$ when there are two distinct $u^*$} edge[-latex] (.62,.45);
    \end{scope}
  \end{tikzpicture}
\end{center}
  \caption{Discontinuity in the symmetric solution $u^*$ due to the geometry of $x\mapsto \Ip(x^{1/(e(H)\alpha)})$.}
\label{fig:u^*-jump}
\end{figure}

\begin{lemma}
  \label{lem:u*-unique}
  If $0 < e(H)\alpha \leq 1$ or if $e(H)\alpha > 1$ and
  $p \geq p_0(e(H)\alpha)$ as defined in \eqref{eq:exp-pc}, then
  the optimization problem \eqref{eq:exp-u-op} is a maximized at a
  unique value of $u$. Otherwise, \eqref{eq:exp-u-op} is maximized at a
  unique $u$ except for a single value of $\beta_2$, where
  the maximum is attained at two distinct $u$'s.
\end{lemma}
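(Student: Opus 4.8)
**Plan for the proof of Lemma~\ref{lem:u*-unique}.**

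The plan is to reduce the claim to a purely one-dimensional convexity statement about the curve $x\mapsto\psi(x):=\Ip(x^{1/\gamma})$ on $[0,1]$, where $\gamma = e(H)\alpha$, and then read off the answer from the discussion preceding the lemma. After the change of variables $u = x^{1/\gamma}$, maximizing \eqref{eq:exp-u-op} is equivalent to maximizing $\beta_2 x - \psi(x)$ over $x\in[0,1]$ (the linear and constant terms in $\beta_1,p$ are irrelevant for the \emph{location} of the maximizer, as recorded in the passage from \eqref{eq:exp-u-op} through \eqref{eq:exp-u-op-px}). Since $\psi$ has infinite slope at both endpoints $x=0$ and $x=1$ (also noted above), the maximizer lies in the open interval and is a point where the graph of $\psi$ has a supporting line of slope $\beta_2$, i.e., where $\psi$ agrees with its convex minorant $\hat\psi$ and $\hat\psi$ has slope $\beta_2$ there. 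Hence the number of maximizers of \eqref{eq:exp-u-op} for a given $\beta_2$ is exactly the number of contact points of the unique supporting line of slope $\beta_2$ with the graph of $\psi$.

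First I would invoke Lemma~\ref{lem:hp-shape} from the appendix, which describes the shape of $\psi$ precisely: if $0<\gamma\le1$, or if $\gamma>1$ and $p\ge p_0(\gamma)$ as in \eqref{eq:exp-pc}, then $\psi$ is convex on $[0,1]$; otherwise $\psi$ has exactly two inflection points to the right of $x=p^\gamma$, so that $\psi$ is convex, then concave, then convex. In the convex case, $\hat\psi=\psi$, every supporting line touches the graph in a single point (strict convexity away from the trivial flat pieces — here $\psi''>0$ except at the isolated inflection-type points when they coincide, which does not happen in the convex regime), so \eqref{eq:exp-u-op} has a unique maximizer for every $\beta_2$; this gives the first assertion.

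For the non-convex ``convex–concave–convex'' case, the standard picture for the convex minorant of such a curve applies: there is a unique \emph{double tangent}, a line tangent to $\psi$ at two points $\ul x<\ol x$ lying on the two outer convex arcs, and $\hat\psi$ coincides with $\psi$ outside $(\ul x,\ol x)$ and with this chord inside. Consequently, for every slope $\beta_2$ different from the slope $\beta_2^{\rm crit}$ of the double tangent, the supporting line of slope $\beta_2$ meets the graph of $\psi$ in exactly one point (on the left convex arc for small $\beta_2$, on the right convex arc for large $\beta_2$), while for $\beta_2=\beta_2^{\rm crit}$ it meets it in exactly the two points $\ul x,\ol x$. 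Translating back through $u=x^{1/\gamma}$ yields the second assertion: \eqref{eq:exp-u-op} is maximized at a unique $u$ except for the single value $\beta_2=\beta_2^{\rm crit}$, where it is maximized at the two distinct values $\ul u=\ul x^{1/\gamma}$ and $\ol u=\ol x^{1/\gamma}$. I expect the main obstacle to be the verification that in the non-convex case the double tangent is truly unique and that no other slope produces two contact points — this is a soft consequence of $\psi$ having exactly one concave arc flanked by two convex arcs together with the infinite endpoint slopes, but it should be spelled out carefully (e.g.\ by monotonicity of the contact point as a function of the slope on each convex arc, and by the fact that the slope of $\psi'$ runs monotonically up, then down, then up), and one should double-check the boundary behavior so that no maximizer escapes to $x=0$ or $x=1$. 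The remaining computations (e.g.\ the explicit form of $\ul\beta_2,\ol\beta_2$ in Theorem~\ref{thm:exp-break}\eqref{item:exp-break-c}) are routine differentiations and are not needed here.
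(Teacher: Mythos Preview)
Your proposal is correct and follows essentially the same approach as the paper: the lemma is stated in the paper as a direct consequence of the geometric discussion immediately preceding it, which performs the same change of variables to~\eqref{eq:exp-u-op-px}, invokes Lemma~\ref{lem:hp-shape} for the convex/convex--concave--convex dichotomy of $x\mapsto\Ip(x^{1/\gamma})$ together with the infinite endpoint slopes, and then reads off uniqueness of the lower tangent of slope $\beta_2$ except at the unique double-tangent slope in the non-convex case. Your write-up simply makes this argument a bit more explicit (e.g., spelling out why the double tangent is unique), which is fine.
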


We can also represent this jump of $u^*$ in the $(p, u)$-phase diagram
as follows. For each $\gamma > 0$, consider the region
$\cB_{\gamma} \subset [0,1]^2$ containing all points $(p, u)$ such that
$(u^{\gamma}, \Ip(u))$ does not lie on the convex minorant of $x
\mapsto \Ip(x^{1/\gamma})$. When $\gamma < 1$ the region
$\cB_{\gamma}$ is empty, but otherwise it is nonempty. The
geometric argument in the previous paragraph shows that $u$ can never appear as a
maximizer to \eqref{eq:exp-u-op-p} if $(p,u) \in \cB_{e(H)\alpha}$, but
all other values of $u$ can. Thus, for a fixed $p = 1/(1+e^{-\beta_1})$, as
$\beta_2$ increases from $-\infty$ to $\infty$, the point $(p,u^*)$ moves up continuously from $0$ in the
$(p,u)$-phase diagram, and jumps over $\cB_{e(H)\alpha}$ as it
reaches it. Thereafter it resumes moving up until hitting 1. This process is
illustrated on the left of Fig.~\ref{fig:u^*-jump-phase} when $e(H)\alpha = 3$, e.g., when
$H= K_3$ and $\alpha = 1$.
\begin{figure}
\begin{center}
  \begin{tikzpicture}[font=\small]
    \node[anchor=south west] (plot) at (0,0)
    {\includegraphics{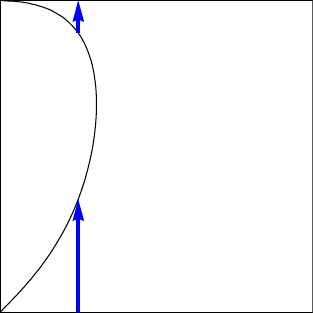}};
    \begin{scope}[x={(plot.south east)},y={(plot.north west)}]
      \node at (.15,.65) {$\cB_3$};
      \node[right,text width=9em] at (.3,.3) {trajectory of $(p,u^*)$ as
        $\beta_2 \nearrow$, $\beta_1$ fixed};
      \node[above] at (0.5,1) {$H = K_3, \quad \alpha = 1$};
      \node[below] at (.5,0) {$p$};
      \node[below=.2em, right] at (0,0) {\footnotesize $0$};
      \node[below=.2em, left] at (1,0) {\footnotesize $1$};
      \node[left] at (0,.5) {$u^*$};
      \node[left=.2em, above] at (0,0) {\footnotesize $0$};
      \node[left=.2em, below] at (0,1) {\footnotesize $1$};
    \end{scope}

    \node[anchor=south west] (plot2) at (8,0)
    {\includegraphics{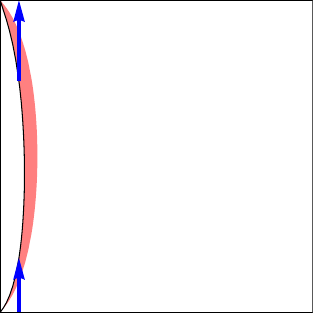}};
    \begin{scope}[shift={(plot2.south west)}, x={(plot2.south east)},y={(plot2.north west)}]
      \node at (.03,.3) {$\cB_{1.8}$};
      \draw node[fill=white,text width = 6em] at (.5,.5) {symmetry broken
        in the shaded region} edge[-latex] (.12,.5);
      \node[above] at (0.5,1) {$H = K_3, \quad \alpha = 0.6$};
      \node[below] at (.5,0) {$p$};
      \node[below=.2em, right] at (0,0) {\footnotesize $0$};
      \node[below=.2em, left] at (1,0) {\footnotesize $1$};
      \node[left] at (0,.5) {$u^*$};
      \node[left=.2em, above] at (0,0) {\footnotesize $0$};
      \node[left=.2em, below] at (0,1) {\footnotesize $1$};
    \end{scope}
  \end{tikzpicture}
\end{center}
  \caption{Discontinuity in the symmetric solution $u^*$ as reflected in the $(p,u$)-phase diagram.}
\label{fig:u^*-jump-phase}
\end{figure}

Turning to large deviations, we know from Lemma~\ref{lem:break-count} that there is
broken symmetry for the density of copies of a $d$-regular graph $H$
whenever we are in $\cB_d$. It turns out that the same
is true for the corresponding exponential random graph model given in~\eqref{eq-exp-model-H-alpha}.
As we just saw though, one must remove $\cB_{e(H)\alpha}$ from
the possible solution space for
$(p,u^*)$. Whenever $\gamma < \gamma'$ we have $\cB_{\gamma} \subset \cB_{\gamma'}$
(see Lemma~\ref{lem:hp-d}), and consequently, if $e(H)\alpha \geq d$ then $\cB_{e(H)\alpha}$ covers $\cB_d$ and
so the entire symmetry
breaking phase is removed, leaving replica symmetry
everywhere. This agrees with the results of Chatterjee and Diaconis~\cite{CD} for the
case $\alpha = 1$.
However, when $e(H)\alpha < d$ it is possible to have $(p, u^*)\in\cB_d$, in which case the construction from
Lemma~\ref{lem:break-count} breaks the symmetry. This is shown
on the right of Fig.~\ref{fig:u^*-jump-phase} for the case $d = 2$ and $e(H)\alpha = 1.8$,
e.g., for $H = K_3$ and $\alpha = 0.6$.

\begin{proof}[\emph{\textbf{Proof of Theorem~\ref{thm:exp-break}}}]
The proof of Part~\eqref{item:exp-break-a} is essentially the same as the proof of Theorem~4.1 in
  the work of Chatterjee and Diaconis~\cite{CD}, except now we use
  the generalized H\"older's inequality (Theorem~\ref{thm:gen-holder})
  instead of the usual H\"older's inequality.

  Suppose that $\alpha \geq d/e(H)$. We first need to show that in this case the only maximizers for the variational problem
  \eqref{eq:exp-var-p} are constant functions. Applying Corollary~\ref{cor:reg-holder}, for any $f\in\wt \cW_0$ we have
  \begin{align*}
  \beta_2 t(H, f)^\alpha - \Ip(f)
  &\leq
  \beta_2 \norm{f}_d^{e(H)\alpha} - \Ip(f) \leq
  \beta_2 \norm{f}_{e(H)\alpha}^{e(H)\alpha} - \Ip(f)\,,\end{align*}
  where the last inequality used the assumption on $\alpha$. This in turn is equal to
  \begin{align*}
  & \int (\beta_2f(x,y)^{e(H)\alpha} - \Ip(f(x,y))) \ dxdy
  \leq \sup_{0 \leq u \leq 1} \beta_2 u^{e(H)\alpha} - \Ip(u) \, ,
\end{align*}
showing that the $\beta_2 t(H,f) - \Ip(f)$ is indeed maximized at
constant functions. Furthermore, when $\beta_2 u^{e(H)\alpha} - \Ip(u)$ is
maximized at a unique $u^*$, then equality holds in place of the above inequalities
only for the constant function $f = u^*$. An additional argument is needed
to treat the case when $\beta_2 u^{e(H)\alpha} - \Ip(u)$ is maximized at two
distinct values. Either by checking the equality conditions in the
proof of Theorem~\ref{thm:gen-holder}, or by referring to
\cite{Fin92}, we know that equality in Corollary~\ref{cor:reg-holder}
occurs if and only if $f(x,y) = g(x)g(y)$ for some function $g \colon
[0,1] \to [0, \infty)$. It can then be easily checked that equality in the above sequence of
inequalities can only occur when $f$ is a constant function. The value
of this constant $u^*$ is given by the optimization
problem~\eqref{eq:exp-u-op-p} and its the uniqueness is addressed in
Lemma~\ref{lem:u*-unique}.

We now turn to prove Part~\eqref{item:exp-break-b}. Since $\beta_1 \geq \log(d-1) - d/(d-1)$,
\[
p = \frac{1}{1+ e^{-\beta_1}}
\geq \frac{1}{1 + e^{-\log(d-1) + d/(d-1)}}
= \frac{d-1}{d-1 + e^{d/(d-1)}}
= p_0(d) \, .
\]
By Lemma~\ref{lem:hp-shape}, $x \mapsto \Ip(x^{1/d})$ is convex for this value of $p$. Hence,
$\Ip(f) \geq \Ip(\norm{f}_d)$ by Jensen's inequality with equality if
and only if $f$ is a constant function. Since $t(H, f)
\leq \norm{f}_d$ by Corollary~\ref{cor:reg-holder},
\[
\beta_2 t(H, f)^\alpha - \Ip(f) \leq \beta_2\norm{f}_d^{e(H)\alpha} -
\Ip(\norm{f}_d)
\leq \sup_{0 \leq u \leq 1} (\beta_2 u^{e(H) \alpha} - \Ip(u)) \, ,
\]
with equality iff $f$ is the constant function equal to $u^*$, the
unique maximizer of $\beta_2 u^{e(H) \alpha} - \Ip(u)$. The uniqueness
of $u^*$ follows from Lemma~\ref{lem:u*-unique} together with noting that when
$e(H)\alpha > 1$ we have $p
\geq p_0(d) > p_0(e(H)\alpha)$ as $d > e(H)\alpha$ and $p_0(\cdot)$ is
increasing in $[1,\infty)$.


It remains to prove Part~\eqref{item:exp-break-c}. We have $0 < p < p_0(d)$. Let $0<\ul u< \ol u<1$ be such that the lower common tangent to $x \mapsto
\Ip(x^{1/d})$ touches the curve at $x = \ul u^d$ and $\ol u^d$. Since
$e(H)\alpha < d$, Lemma~\ref{lem:hp-d} implies that the points $(\ul
u^{e(H)\alpha}, \Ip(\ul u))$ and $(\ol u^{e(H)\alpha}, \Ip(\ol u))$
both lie on the convex minorant of $x \mapsto
\Ip(x^{1/(e(H)\alpha)}))$ and do not lie on the common lower
tangent (if there is one). Let $\ul{\smash{\beta}}_2$ and $\ol
\beta_2$ be as in the theorem statement, observing that these are the values of the derivative of $\Ip(x^{1/(e(H)\alpha)})$
at $x = \ul u^{e(H)\alpha}$ and $\ol u^{e(H)\alpha}$, respectively. Then for any $\beta
\in (\ul{\smash{\beta}}_2, \ol \beta_2)$, using the slope
interpretation of $\beta_2$ given in the discussion proceeding this
proof, we see that
the optimization
problem~\eqref{eq:exp-u-op-p} is maximized for some $u^* \in (\ul u,
\ol u)$. At the same time, by Lemma~\ref{lem:break-count} there exists some $f \in \cW_0$
such that $t(H, f) > (u^*)^{e(H)}$ and $\Ip(f) < \Ip(u^*)$. It follows that
\[
  \sup_{f \in \wt\cW_0} (\beta_2 t(H, f)^\alpha - \Ip(f))
  > \beta_2 (u^*)^{e(H)\alpha} - \Ip(u^*)
  = \sup_{0 \leq u \leq 1} (\beta_2 u^{e(H)\alpha} - \Ip(u)) \, ,
\]
and hence $\beta_2 t(H, f)^\alpha - \Ip(f)$ is not maximized at any
constant function.
\end{proof}

\section{Densities of linear hypergraphs in random hypergraphs} \label{sec:hypergraphs}

In this section, we extend our results to densities of linear hypergraphs in random hypergraphs.
Homomorphisms and densities are defined as in graphs. Similarly, for any $r \in [0,1]$ let
\[
\delta_\square(G, r) := \sup_{A_1, \dots, A_k \subset V(G)}
\frac{1}{\abs{V(G)}^k} \big|e_G(A_1, \dots, A_k) - r \abs{A_1} \cdots
  \abs{A_k}\big|
\]
where $e_G(A_1, \dots, A_k)$ is the number of (ordered) hyperedges of
the form $(a_1, \dots, a_k) \in A_1 \x \cdots \x A_k$.
The main result in this section is the following:

\begin{theorem}
  \label{thm:linear-hypergraph-ldp}
  Fix $d, k \geq 2$ and $0 < p \leq r < 1$. Let
  $H$ be a $d$-regular $k$-uniform linear hypergraph. Let
  $G_n\sim \cG^{(k)}(n,p)$ be the random $k$-uniform hypergraph on $n$ vertices with
  hyperedge probability $p$.
  \begin{enumerate}[(i)]
  \item\label{it-thm-hyper-1} If the point $(r^d,\Ip(r))$
  lies on the convex minorant of the function $x \mapsto
  \Ip(x^{1/d})$ then
  \[
  \lim_{n \to \infty} \frac{1}{\binom{n}{k}} \log \P\paren{t(H, G_n)
    \geq r^{e(H)} } = - \Ip(r)
  \]
  and furthermore, for every $\epsilon > 0$ there exists some constant $C=C(H,\e, p, r)>0$ such that
  \[
  \P \cond{\delta_\square(G_n, r) < \e}{t(H, G_n) \geq
    r^{e(H)}} \geq 1 - e^{-C n^k}\,.
  \]

\item\label{it-thm-hyper-2} If the point $(r^d,\Ip(r))$ does not lie on the convex
  minorant of the function $x \mapsto \Ip(x^{1/d})$ then
  \[
  \lim_{n \to \infty} \frac{1}{\binom{n}{k}} \log \P\paren{t(H, G_n)
    \geq r^{e(H)} } > - \Ip(r)
  \]
    and furthermore, there exist $\e, C > 0$ such that
  \[
  \P \cond{
  \inf\big\{  \delta_\square(G_n,s) : 0\leq s \leq 1\big\} > \e}{t(H, G_n) \geq
    r^{e(H)}} \geq 1 - e^{-C n^k} \, .
  \]

  \end{enumerate}
 In particular, when $d = 2$, case \eqref{it-thm-hyper-2} occurs if and only if $p < \left[1 + (r^{-1} - 1)^{1/(1-2r)}\right]^{-1}$.
\end{theorem}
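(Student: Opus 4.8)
The plan is to rerun the proof of Theorem~\ref{thm:count-ldp} in the hypergraph setting, the only substantive new work being a hypergraph version of the Chatterjee--Varadhan machinery of \S\ref{sec:CV}. First I would set up the limit space: let $\cW_0^{(k)}$ be the set of symmetric measurable functions $W\colon[0,1]^k\to[0,1]$ (the ``naive'' hypergraphons), equip it with the cut-distance $\delta_\square(W,U)=\inf_\sigma\sup_{S_1,\dots,S_k\subseteq[0,1]}\abs{\int_{S_1\times\cdots\times S_k}(W-U^\sigma)}$ over measure-preserving bijections $\sigma$ of $[0,1]$, extend $\Ip$ by $\Ip(W):=\int_{[0,1]^k}\Ip(W(\bx))\,d\bx$, and extend $t(H,\cdot)$ by integrating $\prod_{e\in E(H)}W$ over the coordinates of the vertices of $H$, exactly as for graphs. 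Then $\cG^{(k)}(n,p)$ corresponds to a point $f^{G_n}\in\cW_0^{(k)}$ and induces a probability measure on this space just as $\cG(n,p)$ does. The one genuinely new ingredient is a weak (Frieze--Kannan-type) regularity lemma for $k$-uniform hypergraphs together with a \emph{counting lemma} for $t(H,\cdot)$; this is exactly where the linearity of $H$ enters, since for a linear $H$ no two vertices lie in two common hyperedges, so $t(H,\cdot)$ decomposes as an integral of products of block densities that are jointly controlled by the cut-metric, whence $f\mapsto t(H,f)$ is Lipschitz continuous in $\delta_\square$ (this fails for non-linear $H$, which would require the full hypergraph regularity method). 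Granting this, the proofs of Theorems~\ref{thm:ldp} and~\ref{thm:variational} transcribe word for word --- nothing in them uses graphs beyond the existence of such a regularity/counting pair --- and $t(H,\cdot)$ is a nice graph parameter in the sense of Definition~\ref{def:nice}: it is continuous, and the local-extrema condition holds because $t(H,\min\set{W+\e,1})>t(H,W)$ unless $t(H,W)=1$ while $t(H,(1-\e)W)<t(H,W)$ unless $t(H,W)=0$, as in Example~\ref{ex:subgraph}. The problem is thereby reduced to deciding whether the constant hypergraphon $r$ is the unique minimizer of $\Ip(W)$ over $W\in\cW_0^{(k)}$ subject to $t(H,W)\geq r^{e(H)}$.

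Next I would establish the hypergraph analogue of Corollary~\ref{cor:reg-holder}, namely $t(H,W)\leq\norm{W}_d^{e(H)}$ where $\norm{W}_d:=\paren{\int_{[0,1]^k}W^d}^{1/d}$. This is an application of the generalized H\"older inequality (Theorem~\ref{thm:gen-holder}) with ground set $V(H)$, one factor $f_e=W$ per hyperedge $e$ living on the $k$ coordinates indexed by $e$, and exponent $p_e=d$ throughout: the hypothesis $\sum_{e\colon\ell\in A_e}(1/p_e)\leq1$ reads $\deg_H(\ell)/d\leq1$, which holds with equality by $d$-regularity, and $\int W^d\,d\mu_{A_e}=\norm{W}_d^d$ for each $e$. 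I note that Lemma~\ref{lem:Ld/L1-Ip} and Lemma~\ref{lem:d2-boundary} are statements purely about $\Ip$ and the integer $d$, hence apply here verbatim.

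Part~\eqref{it-thm-hyper-1} then follows exactly as Part~\eqref{it-thm1-1} of Theorem~\ref{thm:count-ldp}: if $(r^d,\Ip(r))$ lies on the convex minorant of $x\mapsto\Ip(x^{1/d})$, then $t(H,W)\geq r^{e(H)}$ forces $\norm{W}_d\geq r$, and Lemma~\ref{lem:Ld/L1-Ip} Part~\eqref{it-Ld/L1-Ip-Ld} gives $\Ip(W)\geq\Ip(r)$ with equality iff $W\equiv r$; now invoke the hypergraph form of Theorem~\ref{thm:variational}, which supplies both the log-probability asymptotics and the conditional concentration. For Part~\eqref{it-thm-hyper-2} I would adapt the construction of Lemma~\ref{lem:break-count}: since $(r^d,\Ip(r))$ is off the convex minorant, pick $0\leq r_1<r<r_2\leq1$ and $s\in(0,1)$ with $r^d=sr_1^d+(1-s)r_2^d$ and $s\Ip(r_1)+(1-s)\Ip(r_2)<\Ip(r)$, put $a=s\e^2$, $b=(1-s)\e^2+\e^3$, $I_1=[0,a]$, $I_2=[1-b,1]$, $I_0=[a,1-b]$, and define $W_\e\in\cW_0^{(k)}$ to equal $r_i$ on tuples with exactly one coordinate in $I_i$ and the remaining $k-1$ in $I_0$ (for $i=1,2$) and $r$ otherwise (well defined since $I_0,I_1,I_2$ are disjoint, and symmetric in the coordinates). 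Since placing a vertex of $H$ into $I_1\cup I_2$ costs a factor $O(\e^2)$ in measure, only embeddings with exactly one vertex in $I_1\cup I_2$ and the rest in $I_0$ matter to order $\e^3$, and $d$-regularity yields
\[
t(H,W_\e)-r^{e(H)}=v(H)\,r^{e(H)-d}\paren{a(r_1^d-r^d)+b(r_2^d-r^d)}+O(\e^4)=v(H)\,r^{e(H)-d}\,\e^3(r_2^d-r^d)+O(\e^4)>0
\]
for small $\e$, while the measures of $\{W_\e=r_i\}$ (with a factor $k$ from the choice of distinguished coordinate) give $\Ip(W_\e)-\Ip(r)=k\,\e^2\paren{s\Ip(r_1)+(1-s)\Ip(r_2)-\Ip(r)}+O(\e^3)<0$ for $\e$ small. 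Hence no constant hypergraphon is a minimizer (a constant $r'>r$ has $\Ip(r')>\Ip(r)$, and a constant $r'<r$ is infeasible), so by compactness the minimizer set lies at positive $\delta_\square$-distance from the set of constant hypergraphons, and applying the hypergraph form of Theorem~\ref{thm:variational} with $\e$ equal to half that distance concludes the proof; when $d=2$ the phase boundary is the one given by Lemma~\ref{lem:d2-boundary}.

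The main obstacle is the first step: carefully verifying that a weak hypergraph regularity lemma does yield a counting lemma --- hence cut-metric continuity of $t(H,\cdot)$ --- for linear $H$, and checking that the Chatterjee--Varadhan proofs of Theorems~\ref{thm:ldp}--\ref{thm:variational} go through with ``$[0,1]^2$'' replaced by ``$[0,1]^k$'' throughout. Everything downstream --- the H\"older bound, the Jensen estimate of Lemma~\ref{lem:Ld/L1-Ip}, and the symmetry-breaking construction --- is a direct transcription of the arguments of \S\ref{sec:phase-subgraph}.
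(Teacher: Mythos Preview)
Your proposal is correct and follows essentially the same route as the paper: extend the Chatterjee--Varadhan framework to $\cW_0^{(k)}$ via a weak hypergraph regularity lemma and a counting lemma valid for linear $H$ (the paper records these as Theorems~\ref{thm:hyp-FK}--\ref{thm:hyp-variational} without detailed proof), then rerun \S\ref{sec:phase-subgraph} using the generalized H\"older bound (your $t(H,W)\leq\norm{W}_d^{e(H)}$ is exactly Lemma~\ref{lem:hyp-reg-holder}), Lemma~\ref{lem:Ld/L1-Ip}, and the hypergraph version of the symmetry-breaking construction. Your $W_\e$ coincides with the paper's $f_\e = r + (r_1-r)\one_A + (r_2-r)\one_B$ in Lemma~\ref{lem:hyp-break}, and your leading-order computations of $t(H,W_\e)-r^{e(H)}$ and $\Ip(W_\e)-\Ip(r)$ (including the factor $k$) are correct.
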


Let $k \geq 2$ be an integer and let $\cW^{(k)}$ be the space of all
bounded measurable functions $[0,1]^k \to \R$ that are symmetric
(i.e., $f(x_1, x_2, \dots, x_k) = f(x_{\pi(1)},
x_{\pi(2)}, \dots, x_{\pi(k)})$ for any permutation $\pi$ of $[k] =
\set{1, 2, \dots, k}$). Let $\cW^{(k)}_0$ denote all symmetric
measurable functions $[0,1]^k \to [0,1]$.
Every $k$-uniform hypergraph $G$ corresponds to a point $f^G \in
\cW_0^{(k)}$ similar to the case for graphs. As before, we can endow $\cW$
with usual $L^p$-norm and in addition have the following cut norm:
\[
\norm{f}_\square := \sup_{S_1, \dots, S_k \subset [0,1]} \int_{S_1 \x
  \cdots \x S_k} f(x_1, \dots, x_k) \ dx_1 \cdots dx_k\,.
\]
This gives rise to the cut distance: for any $f,g \in \cW_0^{(k)}$,
\[
\delta_\square(f,g) := \inf_{\sigma} \norm{f - g^\sigma}_\square
\]
where $\sigma$ ranges over all measure-preserving bijections on
$[0,1]$, and $g^\sigma \in \cW^{(k)}_0$ is defined by $g^\sigma(x_1,
\dots, x_k) = g(\sigma(x_1), \dots, \sigma(x_k))$. Let $\wt
\cW^{(k)}_0$ be the metric space formed by taking equivalences of
points in $\cW^{(k)}_0$ with zero cut-distance.

The space $\cW_0^{(k)}$ is a straightforward generalization of the
space $\cW_0$ of graphons. Unfortunately, it does not fully capture the
richness of the structure of hypergraphs. This notion is closely
related to some initial attempts at generalizing Szemer\'edi's
regularity lemma to hypergraphs (e.g., \cite{Chu91}). The main issue
is that while the regularity lemma generalizes easily to this setting,
there is no corresponding counting lemma for embedding a fixed
hypergraph $H$ unless $H$ is linear (recall that a hypergraph is
linear if every pair of vertices is contained in at most one
hyperedge). The difficulty in extending the results to general $H$ is
related to the intricacies of hypergraph regularity (see, e.g.,
Gowers~\cite{Gow07} and Nagle, R\"odl,
Schacht, and Skokan~\cite{NRS06,RS04}, as well as the recent progress
in this direction by Elek and Szegedy~\cite{ES07,ES08}). Here
we restrict ourselves to the basic setting above which suffices for
controlling densities of linear hypergraphs.

For any $f \in \cW^{(k)}$ and
any $k$-uniform hypergraph $H$, write $V(H) = [m]$ and define
\[
t(H, f) = \int_{[0,1]^k} \prod_{\set{i_1, \dots, i_k} \in E(H)}
f(x_{i_1}, \dots, x_{i_k}) \ dx_1 \cdots dx_m\,.
\]
The Chatterjee-Varadhan theory can be generalized to derive rate
functions for large deviations of $H$-counts, where $H$ is a fixed
linear hypergraph. We outline the modifications and omit the
complete details, as the changes required in the original proofs are mostly straightforward.

We start with a statement generalizing the weak regularity lemma of
Frieze and Kannan~\cite{FK99}. The analytic form of this statement for graphs can be
found in Lov\'asz and Szegedy~\cite[Lem.~3.1]{LS07}.

\begin{theorem}
  \label{thm:hyp-FK}
  For every $\e > 0$ there exists some $M(\e)>0$ such that for every $f \in
  \cW_0^{(k)}$ there exist some $m \leq M(\e)$ and some $g \in
  \cW_0^{(k)}$ with $\delta_\square(f,g) \leq \e$, and such that $g$
  is constant in each box $(\frac{i_1 - 1}m, \frac{i_1}m] \x \cdots \x ( \frac{i_k -
  1}m, \frac{i_k}m]$.
\end{theorem}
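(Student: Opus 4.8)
\textbf{Proof plan for Theorem~\ref{thm:hyp-FK}.}
The plan is to run the standard energy-increment proof of the analytic weak regularity lemma of Frieze and Kannan — whose graphon formulation is \cite[Lem.~3.1]{LS07} — which carries over to $\cW_0^{(k)}$ essentially word for word, and then to append a short rearrangement-and-rounding step that upgrades the resulting step function, defined over an \emph{arbitrary} partition of $[0,1]$, to one that is constant on a \emph{regular} grid. This last step is the only reason the statement is phrased with $\delta_\square$ rather than $\norm{\cdot}_\square$: rearranging the parts into intervals uses a measure-preserving bijection, and rounding the interval endpoints costs a further small cut-norm error.

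First I would carry out the energy increment. Given $f \in \cW_0^{(k)}$ and a partition $\mathcal P = \{V_1,\dots,V_m\}$ of $[0,1]$ into positive-measure sets, let $f_{\mathcal P}$ be the conditional expectation of $f$ onto the $\sigma$-algebra generated by the boxes $V_{i_1}\x\cdots\x V_{i_k}$; since $f$ is symmetric and the partition is used identically in each coordinate, $f_{\mathcal P}\in\cW_0^{(k)}$. If there exist $S_1,\dots,S_k\subset[0,1]$ with $\abs{\int_{S_1\x\cdots\x S_k}(f-f_{\mathcal P})} > \e/3$, refine $\mathcal P$ by all of $\{S_j,S_j^c\}$ to get $\mathcal P'$ with $\abs{\mathcal P'}\le 2^k\abs{\mathcal P}$. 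As $\one_{S_1\x\cdots\x S_k}$ is measurable for the finer box algebra while $f_{\mathcal P}$ is the coarser projection, $\ang{f_{\mathcal P'}-f_{\mathcal P},\one_{S_1\x\cdots\x S_k}} = \ang{f-f_{\mathcal P},\one_{S_1\x\cdots\x S_k}}$, so Cauchy--Schwarz together with $\norm{\one_{S_1\x\cdots\x S_k}}_2\le 1$ gives $\norm{f_{\mathcal P'}-f_{\mathcal P}}_2 > \e/3$, whence by orthogonality $\norm{f_{\mathcal P'}}_2^2 \ge \norm{f_{\mathcal P}}_2^2 + \e^2/9$. Since $\norm{f_{\mathcal P}}_2^2\in[0,1]$ this refinement can occur at most $9/\e^2$ times, so starting from the trivial partition we terminate with $m\le M_0 := 2^{9k/\e^2}$ parts and $\abs{\int_{S_1\x\cdots\x S_k}(f-f_{\mathcal P})}\le\e/3$ for all $S_1,\dots,S_k$.

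Second I would pass to a regular grid. Let $\sigma$ be a measure-preserving bijection of $[0,1]$ carrying each $V_i$ onto the consecutive interval $J_i$ of length $\mu(V_i)$, so that $g_0 := (f_{\mathcal P})^\sigma$ is constant on every box $J_{i_1}\x\cdots\x J_{i_k}$ and, as the $k$-fold cut functional is invariant under applying $\sigma$ to all coordinates, $\abs{\int_{S_1\x\cdots\x S_k}(f^\sigma - g_0)}\le\e/3$ for all $S_1,\dots,S_k$. Now put $N := \lceil 6km/\e\rceil$, round every endpoint of the $J_i$ down to the nearest integer multiple of $1/N$ to obtain intervals $J_i'$ (some possibly empty) that still partition $[0,1]$ with $\sum_i\mu(J_i\triangle J_i') < 2m/N$, and let $g\in\cW_0^{(k)}$ take, on $J'_{i_1}\x\cdots\x J'_{i_k}$, the constant value of $g_0$ on $J_{i_1}\x\cdots\x J_{i_k}$. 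Then $g$ is symmetric, $[0,1]$-valued, and constant on each box of the regular $N$-grid (each such box lies inside a single $J'$-box), and $g$ differs from $g_0$ only on a set of measure at most $k\sum_i\mu(J_i\triangle J_i') < 2km/N \le \e/3$, so $\abs{\int_{S_1\x\cdots\x S_k}(g-g_0)}\le\e/3$ for all $S_1,\dots,S_k$. Combining, $\abs{\int_{S_1\x\cdots\x S_k}(f^\sigma-g)}\le\e$, hence $\delta_\square(f,g)\le\norm{f-g^{\sigma^{-1}}}_\square = \norm{f^\sigma - g}_\square \le \e$, while $N\le M(\e) := \lceil 6kM_0/\e\rceil+1$ depends only on $\e$ and $k$, as required.

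The bulk of this — the energy increment — is standard once one notes that every ingredient (block averages, Pythagoras for nested conditional expectations, and the bound $\abs{\ang{g,\one_{S_1\x\cdots\x S_k}}}\le\norm{g}_\square$) is insensitive to the uniformity $k$; indeed this is exactly the point of restricting to linear hypergraphs, since for them the box algebra suffices. The one genuinely new bookkeeping point, and the place to be careful, is the second step: one must keep the rounding denominator $N$ bounded in terms of $\e$ alone, which is legitimate precisely because the number of parts $m$ from the first step is already $\e$-bounded, and one must verify that rounding perturbs the kernel on a set whose measure — hence whose contribution to the $k$-fold cut functional — is uniformly $O(m/N)$. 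Everything else (existence of the rearranging bijection $\sigma$, symmetry of $f_{\mathcal P}$ and of $g$, discarding null parts) is routine.
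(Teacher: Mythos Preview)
Your proposal is correct and is exactly the standard energy-increment proof of the Frieze--Kannan weak regularity lemma in its analytic form, extended in the obvious way from $k=2$ to general $k$; the paper does not actually supply a proof of Theorem~\ref{thm:hyp-FK} but simply cites \cite[Lem.~3.1]{LS07} for the graph case and declares the modifications straightforward, so what you have written is precisely what the paper has in mind. The only part not literally present in \cite{LS07} is your second step (rearranging to intervals and rounding to a regular $N$-grid), which is the routine bookkeeping needed to match the specific ``equal boxes'' formulation stated here, and you have handled it correctly.
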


Using Theorem~\ref{thm:hyp-FK}, the proof of Lov\'asz and
Szegedy~\cite[Thm.~5.1]{LS07} can be modified to give the following
topological interpretation of this result.

\begin{theorem}
  \label{thm:hyp-LZ}
  For any integer $k \geq 2$, the metric space $(\wt \cW_0^{(k)},
  \delta_\square)$ is compact.
\end{theorem}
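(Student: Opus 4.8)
The plan is to follow the proof of compactness of the ordinary graphon space $(\wt\cW_0,\delta_\square)$ due to Lov\'asz and Szegedy~\cite[Thm.~5.1]{LS07}, with Theorem~\ref{thm:hyp-FK} playing the role of the analytic weak regularity lemma. Since $(\wt\cW_0^{(k)},\delta_\square)$ is a metric space, it suffices to prove sequential compactness: given a sequence $(f_n)_{n\geq 1}$ in $\cW_0^{(k)}$, I must extract a subsequence converging in $\delta_\square$ to some $f\in\cW_0^{(k)}$. Equivalently, one checks total boundedness (immediate from Theorem~\ref{thm:hyp-FK}, as the step functions on a bounded grid form a compact, hence totally bounded, subset of $\wt\cW_0^{(k)}$ that $\e$-nets the whole space) together with completeness; the construction below exhibits the limit point and so settles both at once.

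The first step is to build nested step approximations to the $f_n$ simultaneously. Applying Theorem~\ref{thm:hyp-FK} iteratively --- at stage $\ell$ relative to the grid already produced, using that the weak regularity lemma can be run so as to refine a prescribed partition --- and then, by pigeonholing over $n$, subdividing, and applying a single measure-preserving rearrangement of $[0,1]$ (which is invisible in $\wt\cW_0^{(k)}$), I may assume there are integers $m_1\mid m_2\mid\cdots$, independent of $n$, and step functions $g_n^{(\ell)}\in\cW_0^{(k)}$ that are constant on each box $\prod_{j=1}^k(\tfrac{i_j-1}{m_\ell},\tfrac{i_j}{m_\ell}]$, such that $\delta_\square(f_n,g_n^{(\ell)})\leq 1/\ell$ for all $n,\ell$, and such that $g_n^{(\ell)}$ is the block-average of $g_n^{(\ell+1)}$ (take $g_n^{(\ell)}$ to be the conditional expectation of $f_n$ onto the level-$\ell$ grid, which only improves the cut-norm error up to an absorbable constant). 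Each $g_n^{(\ell)}$ is then encoded by a symmetric tensor $A_n^{(\ell)}\in[0,1]^{[m_\ell]^k}$.

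Next I would diagonalize over $n$. For each fixed $\ell$ the tensors $A_n^{(\ell)}$ lie in the compact cube $[0,1]^{[m_\ell]^k}$, so a standard diagonal extraction produces a subsequence of $(f_n)$ along which $A_n^{(\ell)}\to A^{(\ell)}$ for every $\ell$ at once; letting $h^{(\ell)}\in\cW_0^{(k)}$ be the step function with tensor $A^{(\ell)}$, we get $\norm{g_n^{(\ell)}-h^{(\ell)}}_1\to 0$, hence also $\norm{g_n^{(\ell)}-h^{(\ell)}}_\square\to 0$, as $n\to\infty$. Since block-averaging passes to the limit, the $h^{(\ell)}$ form a $[0,1]$-valued martingale with respect to the filtration generated by the nested box partitions, so by the martingale convergence theorem there is $f\in\cW_0^{(k)}$ (symmetric, being an a.e.\ limit of symmetric functions) with $\norm{h^{(\ell)}-f}_1\to 0$, and thus $\norm{h^{(\ell)}-f}_\square\to 0$, as $\ell\to\infty$. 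Combining these through the triangle inequality in the cut-metric and $\delta_\square\le\norm{\cdot}_\square$, for the chosen subsequence and every $\ell$,
\[
\delta_\square(f_n,f)\;\leq\;\tfrac1\ell+\norm{g_n^{(\ell)}-h^{(\ell)}}_\square+\norm{h^{(\ell)}-f}_\square\,;
\]
letting $n\to\infty$ and then $\ell\to\infty$ yields $\delta_\square(f_n,f)\to 0$, as desired.

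The one place requiring genuine care is the first step: upgrading Theorem~\ref{thm:hyp-FK} to a nested family of equal-interval grids valid for all $f_n$ simultaneously, and recording that the straightening rearrangement is harmless --- which is precisely why one works in the quotient space $\wt\cW_0^{(k)}$. Once that bookkeeping is arranged, the conceptual core is the appeal to martingale convergence to manufacture the limit $f$, exactly as in the graphon case; no hypergraph-specific feature (in particular, no linearity) enters at this stage.
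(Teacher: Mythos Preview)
Your proposal is correct and follows precisely the route the paper indicates: the paper gives no proof of Theorem~\ref{thm:hyp-LZ} beyond the sentence ``Using Theorem~\ref{thm:hyp-FK}, the proof of Lov\'asz and Szegedy~\cite[Thm.~5.1]{LS07} can be modified,'' and your sketch is exactly that modification --- iterated weak regularity to produce nested step approximations, a diagonal extraction on the finite-dimensional data, and martingale convergence to build the limit graphon. One small remark on the bookkeeping you already flag: arranging the approximations on a common nested family of \emph{equal}-interval grids is slightly more than the Lov\'asz--Szegedy argument actually needs or achieves, since the parts coming from regularity need not have rational measures; the standard fix is either to track the interval lengths (which live in a compact simplex) as part of the diagonalized data, or to round the partition boundaries at a negligible $L^1$ cost before averaging --- either way the rest of your argument goes through unchanged.
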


Theorems~\ref{thm:hyp-FK} and \ref{thm:hyp-LZ} allow us to generalize
the framework of Chatterjee and Varadhan to $(\wt \cW^{(k)}_0, \delta_\square)$.  The
random hypergraph graph $\cG^{(k)}(n,p)$ corresponds to a random point
$f^{\cG^{(k)}(n,p)} \in \wt \cW^{(k)}$, and therefore it induces a
probability distribution $\P_{n,p}$ on $\wt \cW^{(k)}$ supported on a
finite set of points corresponding to hypergraphs on $n$
vertices.

\begin{theorem}
  \label{thm:hyp-ldp} For each fixed $p \in (0,1)$, the sequence
$\P_{n,p}$ obeys a large deviation principle in the space $(\wt \cW_0^{(k)},
\delta_\square)$ with rate function $\Ip$. Explicitly, for any closed
set $F \subseteq \wt \cW_0^{(k)}$,
\[ \limsup_{n \to \infty} \frac{1}{\binom{n}{k}} \log \P_{n,p}( F)
\leq - \inf_{f \in F} \Ip(f)\,,
\] and for any open set $U \subseteq \wt \cW_0^{(k)}$,
\[ \liminf_{n \to \infty} \frac{1}{\binom{n}{k}} \log \P_{n,p}( U)
\geq - \inf_{f \in U} \Ip(f)\,.
\]
\end{theorem}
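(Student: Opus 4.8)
The plan is to mirror the proof of the Chatterjee--Varadhan LDP (Theorem~\ref{thm:ldp}) over the space $(\wt\cW_0^{(k)},\delta_\square)$, the two analytic ingredients being the hypergraph weak regularity lemma (Theorem~\ref{thm:hyp-FK}) and the compactness of $(\wt\cW_0^{(k)},\delta_\square)$ (Theorem~\ref{thm:hyp-LZ}), both already established; as in the graph case, convexity of $\Ip\colon[0,1]\to\R$ makes $f\mapsto\Ip(f)$ lower-semicontinuous on $\wt\cW_0^{(k)}$, and the only arithmetic input is the binomial large-deviation estimate, by which the probability-weighted number of $c$-dense configurations on $N$ slots of independent $p$-coins is $e^{-N(\Ip(c)+o(1))}$ and almost all such configurations are quasirandom. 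I would prove the upper and lower bounds separately.

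\emph{Lower bound.} Given an open $U\subseteq\wt\cW_0^{(k)}$ and $f\in U$ with $\Ip(f)<\infty$, I would first replace $f$ by $(1-\eta)f+\eta p$; this keeps $f$ in $U$ for small $\eta$ and, by pointwise convexity of $\Ip$ together with $\Ip(p)=0$, does not increase $\Ip(f)$, so we may assume $f$ takes values in a compact subinterval of $(0,1)$. Next I would approximate $f$ by the conditional expectation $g$ of $f$ over the $m^k$ boxes of an equipartition of $[0,1]$ into $m$ intervals: Jensen gives $\Ip(g)\leq\Ip(f)$, and $\norm{g-f}_1\to0$ by martingale convergence, so $\delta_\square(g,f)\to0$ and $g\in U$ once $m$ is large. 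Fixing such a step function $g$, I would bound $\P_{n,p}(U)\geq\P_{n,p}(\delta_\square(f^{G_n},g)<\e)$ from below: under a balanced partition of $[n]$ refining $g$'s blocks, the configurations of $G_n$ that are of the prescribed density and quasirandom within every tuple of blocks already force $\delta_\square(f^{G_n},g)<\e$, and the binomial estimate above (legitimate since $g$ avoids $0$ and $1$) shows these have total probability at least $\exp(-\binom{n}{k}(\Ip(g)+o(1)))$. Sending $n\to\infty$, then $\e\to0$, then optimizing over $f\in U$ yields $\liminf_n\tfrac1{\binom{n}{k}}\log\P_{n,p}(U)\geq-\inf_U\Ip$.

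\emph{Upper bound.} By compactness of $\wt\cW_0^{(k)}$ and lower-semicontinuity of $\Ip$, it suffices to establish the local estimate that for every $g\in\wt\cW_0^{(k)}$ and every $\delta>0$ there is a radius $\rho>0$ with $\limsup_n\tfrac1{\binom{n}{k}}\log\P_{n,p}(B(g,\rho))\leq-\Ip(g)+\delta$; a closed set $F$ is then covered by finitely many such balls --- choosing $\rho$ at each centre so small, using l.s.c.\ of $\Ip$, that $\inf_{\ol{B(g,\rho)}}\Ip\geq\inf_F\Ip-\delta$ --- and a union bound together with $\delta\to0$ finishes. For the local estimate I would apply Theorem~\ref{thm:hyp-FK} to replace ``$\delta_\square(f^{G_n},g)<\rho$'' by closeness to one of finitely many step functions on an $M(\rho)\times\cdots\times M(\rho)$ grid; for each such step function I would union-bound over the $M(\rho)^n$ ways of assigning the vertices of $G_n$ to its grid classes (the measure-preserving relabelling hidden in $\delta_\square$ being unknown) and count the hypergraphs whose within-class-tuple densities are $O(\rho)$-close to it, which by the same binomial estimate contributes at most $\exp(-\binom{n}{k}(\inf_{\ol{B(g,\rho)}}\Ip-O(\rho)-o(1)))$. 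Crucially, for $k\geq2$ the assignment prefactor satisfies $M(\rho)^n=e^{n\log M(\rho)}=e^{o(n^k)}$ and is absorbed; shrinking $\rho$ then gives the claimed local bound.

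The step I expect to demand the most care --- the one the surrounding text alludes to when it says the required changes ``are mostly straightforward'' --- is the two-way dictionary between cut-distance to a step function and control of the within-block hyperedge densities, i.e.\ the hypergraph counting lemma. It is available for linear $H$ through Theorem~\ref{thm:hyp-FK} but fails for general $H$, which is precisely where the linearity hypothesis enters. The only other novelty relative to \cite{CV11} is bookkeeping: hyperedges are unordered $k$-subsets, so ``tuples of blocks'' must be taken with the appropriate symmetrization over the $m^k$ boxes of a partition, and one must verify that the vertex-to-class assignment factor is subexponential in $\binom{n}{k}$ --- which is exactly why the hypothesis $k\geq2$ is used.
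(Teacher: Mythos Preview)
Your outline is correct and matches the paper's intended approach: the paper does not give a proof of this theorem but states that the Chatterjee--Varadhan argument carries over once one has the hypergraph weak regularity lemma (Theorem~\ref{thm:hyp-FK}) and compactness of $(\wt\cW_0^{(k)},\delta_\square)$ (Theorem~\ref{thm:hyp-LZ}), which is exactly the skeleton you wrote out.

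One correction to your final paragraph: the LDP itself (Theorem~\ref{thm:hyp-ldp}) makes no reference to a fixed hypergraph $H$ and does \emph{not} require any counting lemma or linearity hypothesis. What you call the ``two-way dictionary between cut-distance to a step function and control of the within-block hyperedge densities'' is immediate from the definition of the cut-norm (plug the blocks in as the test sets $S_1,\dots,S_k$); no embedding of any $H$ is involved. The linearity assumption and the counting lemma enter only downstream, in the variational Theorem~\ref{thm:hyp-variational}, where one needs $t(H,\cdot)$ to be continuous on $(\wt\cW_0^{(k)},\delta_\square)$. So your proof of Theorem~\ref{thm:hyp-ldp} is complete without that paragraph; the caveat you raise belongs to the next theorem, not this one.
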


To derive large deviation results for subgraph densities in random
graphs, it was crucial that the subgraph densities $t(H, \cdot)$
behaved continuously with respect to the cut topology. The next result
implies that the same is
true when $H$ is a linear hypergraph. The proof is a
straightforward generalization of the proof for graphs (see
\cite[Thm.~3.7]{BCLSV08}).

\begin{theorem}
  Let $H$ be a $k$-uniform linear hypergraph. Then for any $f, g \in \cW_0^{(k)}$,
  \[
  \abs{t(H, f) - t(H, g)} \leq e(H) \delta_\square(f,g)\,.
  \]
\end{theorem}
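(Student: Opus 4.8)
The plan is to mirror the standard ``counting lemma'' argument for graphs (\cite[Thm.~3.7]{BCLSV08}), with linearity of $H$ supplying exactly the structural input the hypergraph version needs. First I would reduce to the cut-\emph{norm}: since $t(H,g)=t(H,g^\sigma)$ for every measure-preserving bijection $\sigma$ (by change of variables) and $\delta_\square(f,g)=\inf_\sigma\norm{f-g^\sigma}_\square$, it suffices to prove $\abs{t(H,f_1)-t(H,f_2)}\le e(H)\norm{f_1-f_2}_\square$ for arbitrary $f_1,f_2\in\cW_0^{(k)}$, then apply this with $f_1=f$ and $f_2=g^\sigma$ and take the infimum over $\sigma$.

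Next I would telescope over the hyperedges of $H$. Enumerate $E(H)=\{e_1,\dots,e_{e(H)}\}$, write $V(H)=[m]$, and for $0\le j\le e(H)$ let $t_j$ be the integral defining $t(H,\cdot)$ with the factor on $e_i$ taken to be $f_2$ when $i\le j$ and $f_1$ when $i>j$, so that $t_0=t(H,f_1)$ and $t_{e(H)}=t(H,f_2)$. Then $t(H,f_1)-t(H,f_2)=\sum_{j=1}^{e(H)}(t_{j-1}-t_j)$, and each difference $t_{j-1}-t_j$ is the integral over $[0,1]^m$ of $(f_1-f_2)(x_{e_j})$ against a product $\prod_{i\ne j}h_{e_i}(x_{e_i})$ with every $h_{e_i}\in\{f_1,f_2\}\subset\cW_0^{(k)}$.

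The key step is the bound $\abs{t_{j-1}-t_j}\le\norm{f_1-f_2}_\square$. Write $e_j=\{i_1,\dots,i_k\}$ and, by Fubini, view the integral as an outer integral over the coordinates outside $e_j$ of an inner integral over $x_{i_1},\dots,x_{i_k}$. Linearity of $H$ enters here: if some $e_i\ne e_j$ contained two vertices of $e_j$, that pair of vertices would lie in the two distinct hyperedges $e_i$ and $e_j$, contradicting linearity; hence each $e_i\ne e_j$ meets $e_j$ in at most one vertex. Consequently, once the coordinates outside $e_j$ are fixed, $\prod_{i\ne j}h_{e_i}(x_{e_i})$ factors as $C\cdot\prod_{\ell=1}^k U_\ell(x_{i_\ell})$, where each $U_\ell$ is a $[0,1]$-valued measurable function of the single variable $x_{i_\ell}$ (collecting the hyperedges through $i_\ell$) and $C\in[0,1]$ is a constant (collecting the hyperedges disjoint from $e_j$). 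By the definition of the cut-norm --- reducing by multilinearity to $\{0,1\}$-valued $U_\ell$, i.e.\ to indicators $\one_{S_\ell}$ of sets $S_\ell\subset[0,1]$ --- the inner integral $\abs{\int_{[0,1]^k}(f_1-f_2)(x_{i_1},\dots,x_{i_k})\prod_{\ell=1}^k U_\ell(x_{i_\ell})\,dx_{i_1}\cdots dx_{i_k}}$ is at most $\norm{f_1-f_2}_\square$ for every fixed choice of the outer coordinates; multiplying by $C\le1$ and integrating over the outer coordinates (a probability space of total mass $1$) preserves the bound. Summing over $j=1,\dots,e(H)$ gives the claim.

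The only genuine obstacle is the factorization in the key step, and this is exactly where linearity is indispensable: for a non-linear $H$ a second hyperedge can simultaneously constrain two of the variables $x_{i_1},\dots,x_{i_k}$, the product no longer splits into one-variable pieces, and the cut-norm estimate breaks down --- the analytic reflection of the well-known failure of a counting lemma for non-linear hypergraphs in the weak-regularity setting. Everything else (invariance of $t(H,\cdot)$ under relabeling, the telescoping identity, and the Fubini manipulation) is routine and proceeds exactly as for graphs.
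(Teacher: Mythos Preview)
Your proposal is correct and is precisely the argument the paper has in mind: the paper does not spell out a proof but simply states that it ``is a straightforward generalization of the proof for graphs (see \cite[Thm.~3.7]{BCLSV08}),'' and your telescoping-plus-factorization argument is exactly that generalization, with the linearity hypothesis used in just the right place to guarantee the one-variable factorization needed for the cut-norm bound.
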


The rate function for large deviations in $H$-counts is then
determined by the following variational problem.

\begin{theorem}
  \label{thm:hyp-variational}
  Let $H$ be a $k$-uniform linear hypergraph and let $G_n\sim \cG^{(k)}(n,p)$ be the
  random $k$-uniform hypergraph on $n$ vertices with hyperedge probability $p$. For any fixed $p,r \in (0,1)$,
  \begin{equation}\label{eq:hyp-variational}
    \lim_{n \to \infty} \frac{1}{\binom{n}{k}} \log \P\paren{t(H, G_n)
      \geq r^{e(H)}} =  - \inf\set{\Ip(f) \colon f \in \cW_0^{(k)}, t(H, f) \geq
      r^{e(H)}} \, .
\end{equation}
Let $F^*$ be the set of minimizers for~\eqref{eq:hyp-variational} and let $\wt F^*$ be its image in $\wt\cW_0^{(k)}$.
Then $\wt F^*$ is a non-empty compact set. Moreover, for each $\epsilon >
0$ there exists some $C(H,\epsilon, p, r)>0$ so
that for any $n$
  \[
  \P\cond{\delta_\square (G_n, \wt F^*) \geq \epsilon}{ t(H,G_n) \geq r^{e(H)}} \leq e^{-C  n^k}\,.
  \]
  In particular, if $\wt F^* = \{ f^*\}$ for some $f^* \in \wt\cW_0^{(k)}$ then
  the conditional distribution of $G_n$ given the event $t(H,G_n)\geq r^{e(H)}$
  converges to the point mass at $f^*$ as $n \to \infty$.
\end{theorem}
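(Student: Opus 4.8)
The plan is to transcribe verbatim the argument behind Theorem~\ref{thm:variational} (itself the nice-parameter extension of \cite[Thm.~4.1]{CV11}), now built on the hypergraph ingredients assembled above: the large deviation principle of Theorem~\ref{thm:hyp-ldp}, the compactness of $(\wt\cW_0^{(k)},\delta_\square)$ from Theorem~\ref{thm:hyp-LZ}, the Lipschitz continuity of $f\mapsto t(H,f)$ for linear $H$ just established, and the lower-semicontinuity of $\Ip$ on $\wt\cW_0^{(k)}$ (which follows exactly as in \cite[Lem.~2.1]{CV11} from the convexity of $\Ip$). Throughout write $\phi := \inf\{\Ip(f) : f\in\cW_0^{(k)},\ t(H,f)\geq r^{e(H)}\}$ and $\bar F := \{f\in\wt\cW_0^{(k)} : t(H,f)\geq r^{e(H)}\}$.

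First I would prove the limit formula~\eqref{eq:hyp-variational}. The set $\bar F$ is closed by continuity of $t(H,\cdot)$, hence (by Theorem~\ref{thm:hyp-LZ}) compact, so the closed-set half of Theorem~\ref{thm:hyp-ldp} gives $\limsup_n \binom{n}{k}^{-1}\log\P(t(H,G_n)\geq r^{e(H)}) \leq -\inf_{\bar F}\Ip = -\phi$, and since $\Ip$ is lsc the infimum is attained, so $F^*\neq\emptyset$; being the minimum set of a lsc function on a compact space, $\wt F^*$ is closed, hence compact. For the matching $\liminf$ I would invoke the local-extrema (``nice parameter'') property of subgraph densities as in Example~\ref{ex:subgraph} and Lemma~\ref{lem:tau-cont}: given $\eta>0$, take $f_0\in F^*$ and put $g=\min\{f_0+\eta',1\}$; the product form of $t(H,\cdot)$ for a linear $H$ shows $t(H,g)>t(H,f_0)=r^{e(H)}$ (as $t(H,f_0)<1$ since $r<1$), while $\Ip(g)<\Ip(f_0)+\eta$ once $\eta'$ is small enough, by uniform continuity of $\Ip$ on $[0,1]$. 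Hence the open set $U:=\{f : t(H,f)>r^{e(H)}\}\subseteq\bar F$ has $\inf_U\Ip\leq\phi+\eta$, and the open-set half of Theorem~\ref{thm:hyp-ldp} yields $\liminf_n \binom{n}{k}^{-1}\log\P(t(H,G_n)\geq r^{e(H)})\geq\liminf_n\binom{n}{k}^{-1}\log\P_{n,p}(U)\geq-\phi-\eta$; letting $\eta\downarrow0$ closes~\eqref{eq:hyp-variational}.

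The quantitative localization is the main point, and it is exactly here that compactness plus lower-semicontinuity of $\Ip$ do the real work. Fix $\epsilon>0$ and set $A_\epsilon:=\{f\in\wt\cW_0^{(k)} : t(H,f)\geq r^{e(H)},\ \delta_\square(f,\wt F^*)\geq\epsilon\}$, a closed — hence compact — subset of $\wt\cW_0^{(k)}$. Since $\Ip$ is lsc it attains $\inf_{A_\epsilon}\Ip$ at some $f_1\in A_\epsilon$; if $\Ip(f_1)=\phi$ then, as $A_\epsilon\subseteq\bar F$, $f_1$ would minimize $\Ip$ over $\bar F$, forcing $f_1\in F^*$ and $\delta_\square(f_1,\wt F^*)=0$, a contradiction. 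Thus $\Ip(f_1)=\phi+2\delta$ for some $\delta>0$. The closed-set bound gives $\P_{n,p}(A_\epsilon)\leq e^{-\binom{n}{k}(\phi+\delta)}$ for all large $n$, while the previous paragraph (with $\eta=\delta/2$) gives $\P(t(H,G_n)\geq r^{e(H)})\geq e^{-\binom{n}{k}(\phi+\delta/2)}$ for all large $n$. Dividing, the conditional probability is at most $e^{-\binom{n}{k}\delta/2}\leq e^{-Cn^k}$ for a suitable $C>0$ and all large $n$ (using $\binom{n}{k}\asymp n^k$); shrinking $C$ absorbs the finitely many small $n$, where the conditional probability is trivially $\leq1$ (and bounded away from $1$ whenever the conditioning event has positive probability). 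The ``in particular'' statement is then immediate, since $\wt F^*=\{f^*\}$ makes $\delta_\square(G_n,\wt F^*)=\delta_\square(G_n,f^*)$ converge to $0$ in conditional probability.

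The only genuinely nontrivial point — and the one I expect to be the main obstacle — is the $\liminf$ direction, i.e., producing an \emph{open} set whose $\Ip$-infimum is arbitrarily close to $\phi$, equivalently the left-continuity of the constrained value $r^{e(H)}\mapsto\phi$. In the graph case this is Lemma~\ref{lem:tau-cont}, and it rests entirely on the local-extrema property of subgraph densities; for linear hypergraphs the same perturbation $g=\min\{f+\eta',1\}$ works because $t(H,g)-t(H,f)$ is a positive combination of products of entries of $f$, so it is strictly positive unless $f\equiv1$ on the relevant coordinates. Granting that, every remaining step is a line-by-line transcription of the graph proof with Theorems~\ref{thm:hyp-FK}–\ref{thm:hyp-ldp} and the compactness of $(\wt\cW_0^{(k)},\delta_\square)$ standing in for their graph counterparts; one could also simply invoke the general statement of Theorem~\ref{thm:variational} verbatim once it is observed that $t(H,\cdot)$ is a nice parameter on $\wt\cW_0^{(k)}$.
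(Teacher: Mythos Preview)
Your proposal is correct and matches the paper's approach: the paper does not give an explicit proof of this theorem, instead stating that ``the changes required in the original proofs are mostly straightforward'' and that one simply transcribes the Chatterjee--Varadhan argument (as in Theorem~\ref{thm:variational}) using the hypergraph ingredients (Theorems~\ref{thm:hyp-FK}, \ref{thm:hyp-LZ}, \ref{thm:hyp-ldp}, and the Lipschitz continuity of $t(H,\cdot)$ for linear $H$). Your write-up is precisely that transcription, including the right-continuity step via the local-extrema property of $t(H,\cdot)$ that underlies Lemma~\ref{lem:tau-cont}.
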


We now turn to study the variational problem~\eqref{eq:hyp-variational} towards the proof of Theorem~\ref{thm:linear-hypergraph-ldp}.
The following inequality is an immediate consequence of
Theorem~\ref{thm:gen-holder}.

\begin{lemma}
  \label{lem:hyp-reg-holder}
  Let $H$ be a $k$-uniform hypergraph with maximum degree at most
  $d$, and let $f \in \cW_0^{(k)}$. Then $t(H, f) \leq \norm{f}_d^{e(H)}$.
\end{lemma}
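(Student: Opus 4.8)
The plan is to deduce Lemma~\ref{lem:hyp-reg-holder} as a one-line instance of the generalized H\"older inequality (Theorem~\ref{thm:gen-holder}), exactly as Corollary~\ref{cor:reg-holder} was deduced in the graph case. First I would set $V(H)=[m]$ and list the hyperedges as $E(H)=\{e_1,\dots,e_{e(H)}\}$, each $e_j$ a $k$-element subset of $[m]$. Because $f\in\cW_0^{(k)}$ is symmetric, the value $f(x_{i_1},\dots,x_{i_k})$ for $\{i_1,\dots,i_k\}=e_j$ does not depend on the order of the $i$'s, so $f$ determines a function $f_j$ on $\Omega_{e_j}:=\prod_{\ell\in e_j}[0,1]$, and by definition $t(H,f)=\int_{[0,1]^m}\prod_{j=1}^{e(H)} f_j\,dx_1\cdots dx_m$.

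Next I would apply Theorem~\ref{thm:gen-holder} with $n=m$, with each $\Omega_\ell=[0,1]$ carrying Lebesgue measure (so $\mu$ is Lebesgue measure on $[0,1]^m$), with $A_j=e_j$ for $j\in[e(H)]$, with the functions $f_j$ just described, and with $p_j=d$ for every $j$. Its hypothesis requires $\sum_{j:\,\ell\in e_j}(1/p_j)\le 1$ for every $\ell\in[m]$; since all $p_j=d$, this sum equals $\deg_H(\ell)/d$, which is at most $1$ precisely because $H$ has maximum degree at most $d$ (a vertex lying in no hyperedge contributes an empty sum and is harmless). The inequality then yields
\[
t(H,f)=\int\prod_{j=1}^{e(H)}|f_j|\,d\mu\ \le\ \prod_{j=1}^{e(H)}\Big(\int|f_j|^{d}\,d\mu_{e_j}\Big)^{1/d},
\]
where the equality on the left uses $f\ge 0$. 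For each $j$, the measure-preserving identification $\Omega_{e_j}\cong[0,1]^k$ together with the symmetry of $f$ gives $\int|f_j|^{d}\,d\mu_{e_j}=\int_{[0,1]^k}f(x_1,\dots,x_k)^{d}\,dx_1\cdots dx_k=\norm{f}_d^{\,d}$, so the right-hand side equals $\prod_{j=1}^{e(H)}(\norm{f}_d^{\,d})^{1/d}=\norm{f}_d^{\,e(H)}$, as desired.

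I do not expect any genuine obstacle: the whole argument is the bookkeeping that matches the hyperedges of $H$ with the index sets $A_i$ of Theorem~\ref{thm:gen-holder} and the maximum-degree bound on $H$ with the summability condition $\sum_{i:\,\ell\in A_i}(1/p_i)\le 1$. The only points to state carefully are (i) invoking the symmetry of $f\in\cW_0^{(k)}$ so that each $f_j$ is a well-defined function on $\Omega_{e_j}$ regardless of the vertex ordering within $e_j$, and (ii) using $f\ge 0$ to drop the absolute values, which is what makes the left side of the generalized H\"older bound literally $t(H,f)$ rather than a larger quantity.
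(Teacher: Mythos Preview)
Your proposal is correct and is exactly the approach the paper takes: the lemma is stated as an immediate consequence of Theorem~\ref{thm:gen-holder}, applied with $A_j$ the hyperedges of $H$ and all exponents $p_j=d$, just as in Corollary~\ref{cor:reg-holder} for graphs. The bookkeeping you spell out (symmetry of $f$ making each $f_j$ well-defined, $f\ge 0$ dropping absolute values, and the degree condition giving $\sum_{j:\ell\in e_j}1/d\le 1$) is precisely what the paper leaves implicit.
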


The following lemma mirrors Lemma~\ref{lem:break-count} for proving
the symmetry breaking phase.

\begin{lemma}
  \label{lem:hyp-break}
  Let $H$ be linear $d$-regular $k$-uniform hypergraph. Let $0 < p
  \leq r < 1$ be such that $(r^d, \Ip(r))$ does not lie on the convex
  minorant of $x \mapsto \Ip(x^{1/d})$. Then there exists $f \in
  \cW_0^{(k)}$ with $t(H, f) > r^{e(H)}$ and $\Ip(f) < \Ip(r)$.
\end{lemma}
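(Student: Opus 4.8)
The plan is to run the construction from the proof of Lemma~\ref{lem:break-count} in $k$ dimensions. Since $(r^d,\Ip(r))$ does not lie on the convex minorant of $x\mapsto\Ip(x^{1/d})$, I would first choose $0\le r_1<r<r_2\le 1$ and $s\in(0,1)$ with $r^d = s\,r_1^d + (1-s)\,r_2^d$ and $s\Ip(r_1) + (1-s)\Ip(r_2) < \Ip(r)$, exactly as there. For small $\epsilon>0$ put $a = s\epsilon^2$, $b = (1-s)\epsilon^2 + \epsilon^3$, $I_1 = [0,a]$, $I_2 = [1-b,1]$, $I_0 = [a,1-b]$, and define $f_\epsilon\in\cW_0^{(k)}$ by setting $f_\epsilon(x_1,\dots,x_k)$ to be $r_1$ when exactly one coordinate lies in $I_1$ and the other $k-1$ lie in $I_0$, to be $r_2$ when exactly one coordinate lies in $I_2$ and the other $k-1$ lie in $I_0$, and to be $r$ otherwise. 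For $k=2$ this is precisely the graphon used in Lemma~\ref{lem:break-count}.

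Next I would estimate $t(H,f_\epsilon)$. As $a,b=O(\epsilon^2)$, up to an $O(\epsilon^4)$ error only embeddings of $H$ in which exactly one vertex $u$ is mapped into $I_1\cup I_2$ and all other vertices into $I_0$ can contribute anything other than $r^{e(H)}$. If such a $u$ is mapped into $I_1$, then each of the $d$ hyperedges of $H$ through $u$ has exactly one coordinate in $I_1$ and its remaining $k-1$ coordinates in $I_0$, hence evaluates to $r_1$, while every hyperedge not through $u$ has all $k$ coordinates in $I_0$ and evaluates to $r$; thus the product over $E(H)$ is $r_1^d r^{e(H)-d}$, and symmetrically $r_2^d r^{e(H)-d}$ for $u$ mapped into $I_2$. (This is the single point at which the $d$-regularity of $H$ enters, just as in the graph case.) Summing over the $v(H)$ choices of $u$ and expanding,
\[
t(H,f_\epsilon) - r^{e(H)} = v(H)\,\big(a(r_1^d-r^d)+b(r_2^d-r^d)\big)\,r^{e(H)-d} + O(\epsilon^4);
\]
by the choice of $a,b,s$ the bracketed quantity equals $\epsilon^3(r_2^d-r^d)>0$, so $t(H,f_\epsilon)>r^{e(H)}$ for all sufficiently small $\epsilon$.

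Finally, $f_\epsilon=r_1$ on a set of measure $k\,a\,(1-a-b)^{k-1}$ (choose which coordinate lies in $I_1$) and $f_\epsilon=r_2$ on a set of measure $k\,b\,(1-a-b)^{k-1}$, so
\[
\Ip(f_\epsilon) - \Ip(r) = k\,(1-a-b)^{k-1}\big(a\Ip(r_1)+b\Ip(r_2)-(a+b)\Ip(r)\big).
\]
Substituting $a=s\epsilon^2$ and $b=(1-s)\epsilon^2+\epsilon^3$, the last factor equals $\epsilon^2\big(s\Ip(r_1)+(1-s)\Ip(r_2)-\Ip(r)\big) + \epsilon^3\big(\Ip(r_2)-\Ip(r)\big)$, whose leading term is strictly negative by the defining inequality for $s,r_1,r_2$; hence $\Ip(f_\epsilon)<\Ip(r)$ for all sufficiently small $\epsilon>0$, and $f_\epsilon$ is the required graphon. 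I do not anticipate a genuine obstacle here: the argument is a mechanical $k$-dimensional transcription of Lemma~\ref{lem:break-count}, and the only subtle point --- that moving a single vertex changes $t(H,\cdot)$ by exactly $(r_i^d-r^d)r^{e(H)-d}$ --- follows from $d$-regularity precisely as in the graph setting, since after the move every hyperedge through that vertex still has exactly one coordinate in the perturbed interval and its remaining coordinates in $I_0$.
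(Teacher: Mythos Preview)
Your proposal is correct and follows exactly the approach the paper takes: the paper's proof of Lemma~\ref{lem:hyp-break} simply says to rerun the proof of Lemma~\ref{lem:break-count} with the $k$-dimensional analogue $f_\epsilon = r + (r_1-r)\one_A + (r_2-r)\one_B$, where $A$ is the union of $[0,a]\times[a,1-b]^{k-1}$ and its coordinate permutations (and similarly for $B$), which is precisely your construction. Your write-up fills in the details the paper omits, and the computations of $t(H,f_\epsilon)$ and $\Ip(f_\epsilon)$ are carried out correctly.
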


The proof of Lemma~\ref{lem:hyp-break} is essentially the same as the
proof of Lemma~\ref{lem:break-count}, with the following modification.
One needs to adjust $f_\e$ into
$f_\e = r + (r_1 - r) \one_A + (r_2 - r) \one_B$, where $A \subset [0,1]^k$
is the union of the box $[0,a] \x [a,1-b]^{k-1}$ along with the $k-1$ other
boxes formed by permuting the coordinates. Similarly $B$ is the union
of $[1-b,1] \x [a,1-b]^{k-1}$ and its coordinate permutations.

\begin{proof}[\textbf{\emph{Proof of Theorem~\ref{thm:linear-hypergraph-ldp}}}]
Our starting point is an application of Theorem~\ref{thm:hyp-variational}.
Suppose $f \in \cW^{(k)}_0$
  satisfies $t(H, f) \geq r^{e(H)}$. By
  Lemma~\ref{lem:hyp-reg-holder}, $\norm{f}_d \geq r$. For Part~\eqref{it-thm-hyper-1} of
  the theorem, Lemma~\ref{lem:Ld/L1-Ip} Part~\eqref{it-Ld/L1-Ip-Ld} (which is also valid for
  $\cW_0^{(k)}$) implies that $\Ip(f) \geq \Ip(r)$ with equality if and
  only if $f$ is the constant function $r$, so the variational
  problem on the right-hand side of \eqref{eq:hyp-variational} has the
  constant function $r$ as the unique minimizer. For Part~\eqref{it-thm-hyper-2} of the
  theorem, Lemma~\ref{lem:hyp-break} implies that the constant
  function $r$ is not in the set of minimizers of the variational
  problem~\eqref{eq:hyp-variational}, and the conclusion follows
  analogously to the proof of Part~\eqref{it-thm1-2} of Theorem~\ref{thm:count-ldp}.
\end{proof}

\section{Graph homomorphism inequalities} \label{sec:hom}

Our goal in this section is to present a short new proof of Theorem~\ref{thm:GT},
stating that for any $d$-regular bipartite graph $G$ and any graph $H$ allowing loops one has
\begin{equation} \label{eq:GT}
\hom(G, H) \leq \hom(K_{d,d}, H)^{|V(G)|/(2d)}\,.
\end{equation}
(This inequality is tight when $G$ is a disjoint union of copies of the complete bipartite graph $K_{d,d}$.)
Generalized to graphons, the inequality states that for any $d$-regular bipartite graph $G$ and
  $f \in \cW_0$
\begin{equation} \label{eq:GT'}
  t(G, f) \leq t(K_{d,d}, f)^{\abs{V(G)}/(2d)}
\end{equation}
(the more general formulation follows from~\eqref{eq:GT} via a standard limiting argument, e.g., see~\cite{GT04}).
As mentioned in the introduction, all previously known proofs of this inequality involved entropy.
In contrast, the following proof does not rely on entropy or limiting arguments and instead is
an immediate consequence of the generalized H\"older's inequality
(Theorem~\ref{thm:gen-holder}).
\begin{proof}[\textbf{\emph{Proof of Theorem~\ref{thm:GT}}}]
Label the vertices on the left-bipartition of $G$ by $[n] = \set{1,
  \dots, n}$, and let $A_i \subset [n]$ be the neighborhood of the $i$-th
vertex on the right-bipartition of $G$. Define
\[
g(x_1, \dots, x_d) := \int f(x_1, y) f(x_2, y) \cdots f(x_d, y) \ dy \qquad\mbox{for any
$x_1, \dots, x_d \in [0,1]$}
\, ,
\]
and write $g(x_A)$ for $x\in [0,1]^n$ and $A=\{i_1,\ldots,i_d\}$ to denote $g(x_{i_1},\ldots,x_{i_d})$.
With this notation,
\[
t(G, f) = \int \prod_{j=1}^n \prod_{i \in A_j} f(x_i, y_j) \ dx_1
\cdots dx_n dy_1 \cdots dy_n
 = \int_{[0,1]^n} g(x_{A_1}) \cdots g(x_{A_n}) \ d x
\leq \norm{g}_d^n
\]
by the generalized H\"older's inequality (Theorem~\ref{thm:gen-holder}).
The result follows from noting that
\begin{equation*}
\norm{g}_d^d = \int \prod_{i=1}^d \prod_{j=1}^d f(x_i, y_j) \ dx_1
\cdots dx_d dy_1 \cdots dy_d = t(K_{d,d}, f)\,.\qedhere
\end{equation*}
\end{proof}

\begin{remark}
A natural question to ask is whether Theorem~\ref{thm:GT} can be extended
to all $d$-regular graphs $G$, as in the case for independent
sets~\cite{Zhao10}. Unfortunately, the answer to this question is
negative. A simple counter-example is $G = K_3$ and $f$ being the graphon
corresponding to the $2 \x 2$ identity matrix. The second author~\cite{Zhao11}
extended Theorem~\ref{thm:GT} to non-bipartite $G$ for certain
families of $f \in \cW_0$, e.g., $\set{0,1}$-valued graphons that are
non-decreasing in both coordinates. Galvin~\cite{Gal} conjectured that
if $G$ is a simple $d$-regular graph and $H$ is a graph allowing
loops then $\hom(G,H)$ is at most the maximum of $\hom(K_{d,d},
H)^{\abs{V(H)}/(2d)}$ and $\hom(K_{d+1}, H)^{\abs{V(H)}/(d+1)}$.
\end{remark}

\section{Open problems} \label{sec:end}

It is natural to ask for extensions of the Chatterjee-Varadhan~\cite{CV11}
large deviations theory to sparse Erd\H{o}s-R\'enyi random graphs (i.e., $\cG(n,p)$ where $p(n)\to 0$ as $n\to\infty$) or to
densities of general (not necessarily linear) hypergraphs in random hypergraphs.
 These may require extensions of
Szemer\'edi's regularity lemma to sparse graphs (see
\cite{Koh97,GS05,CFZ}) and hypergraphs \cite{Gow07,NRS06,RS04}.
Even for $\cG(n,p)$ with fixed $p$, various problems remain open, several of which we highlight below.

\subsubsection*{Minimizers of the variational problem}
It was pointed out by Chatterjee and Varadhan~\cite{CV11}
that no solutions of the variational problem in Theorem~\ref{thm:variational}
are known anywhere in the symmetry breaking phase. This remains the case.
In fact, there is not a single point $(p,r)$ in the symmetry breaking phase
where we can even compute the large deviation rate.
It would be interesting to see whether the minimizers are always 2-step graphons, i.e., graphons
which are constant on each of $[0,w]^2, \big([0,w] \x (w,1]\big) \cup \big((w,1] \x
[0,w]\big)$ and $[w,1]^2$ for some $w \in (0,1)$.

\subsubsection*{Phase boundary for non-regular graphs}
In this paper, we identified the replica symmetric phase for
upper tail deviations in the densities of $d$-regular graphs. The phase boundary for non-regular graphs
remains unknown. We suspect
that a modification of the construction in Lemma~\ref{lem:break-count}
can be used to establish the symmetry breaking phase for some (perhaps all) subgraph density deviations. However,
at present we do not have matching boundaries for the replica symmetric phase.

\subsubsection*{Lower-tail phase transition}
Proposition~\ref{prop:sidorenko-lower} shows that if a bipartite graph $H$
satisfies Sidorenko's conjecture then there is replica symmetry
everywhere in the lower tail deviation of $H$-densities. It is an open
question whether all bipartite graphs satisfy Sidorenko's conjecture,
although it is possible that Proposition~\ref{prop:sidorenko-lower} can be
proved without the full resolution of Sidorenko's conjecture.

When $H$ is not bipartite, the following argument shows that
there exists symmetry breaking in the lower tail, at least for certain
values of $(p,r)$. Let $f$ be the graphon taking the value $0$ on
$[0,\frac12]^2 \cup [\frac12,1]^2$ and the value $p$ elsewhere, so that $t(H,f) = 0$
and $\Ip(f) = \Ip(0)/2$. Let $r_0 \in (0,p)$ be such that $\Ip(0) = 2
\Ip(r_0)$. Then for any $r \in (0, r_0)$ we have $\Ip(f) < \Ip(r)$,
and so $(p,r)$ is in the symmetry breaking phase, resulting in a nontrivial
phase diagram. We currently do not
know the complete lower tail phase diagram for any non-bipartite $H$.

\subsubsection*{Symmetry breaking in exponential random graphs}
Fig.~\ref{fig:betaphase} showed several $(\beta_1, \beta_2)$-phase
plots for the symmetry breaking region given in Part~\eqref{item:exp-break-c} of Theorem~\ref{thm:exp-break}.
However, unlike the situation for large deviations, we do not
know if that is the full region of symmetry breaking. It would be interesting to
  characterize the full set of triples $(\alpha, \beta_1, \beta_2)$ for
  which there is replica symmetry in Theorem~\ref{thm:exp}.

\bigskip
\appendix

\section{The convex minorant of $\Ip(x^{1/\gamma})$} \label{app:minorant}

This appendix contains some technical lemmas about the convex minorant
of $\Ip(x^{1/\gamma})$, which appears throughout the paper.
Let us first informally summarize the claims. It is well known that
\[
\Ip(x) = x
\log \frac{x}{p} + (1-x) \log \frac{1-x}{1-p}
\]
is
a convex function of $x$. When $\gamma > 1$ (here we allow any real $\gamma$,
not just integers), it turns out that there is some $p_0(\gamma)$ for which
$x \mapsto \Ip(x^{1/\gamma})$ is still a convex function when $p \geq p_0$. However, when $p
< p_0$, the function $x \mapsto \Ip(x^{1/\gamma})$ is no longer convex: it
has exactly two inflection points, both to the right of its minimum at
$x = p^\gamma$. The function is concave in the corresponding middle region, whereas
it is convex in the two outer regions.

In the case when $x \mapsto \Ip(x^{1/\gamma})$ is not convex, it has a
unique lower tangent, touching the plot of the function at the points
$(\uq^\gamma, \Ip(\uq))$ and $(\oq^\gamma, \Ip(\oq))$. The convex minorant of $x
\mapsto \Ip(x^{1/\gamma})$ is formed by replacing the middle segment $x \in
(\uq^\gamma, \oq^\gamma)$ by the lower common tangent, as shown
in Fig.~\ref{fig:convex-minorant-gamma}. (The various not-to-scale plots of $\Ip(x^{1/\gamma})$
are shown for illustrative purposes in order to highlight the features
of the plots. In contrast, all the phase diagrams plots are drawn to
scale.)

\begin{figure}
\begin{center}
  \begin{tikzpicture}[font=\small]
    \node[anchor=south west] (plot) at (0,0)
    {\includegraphics{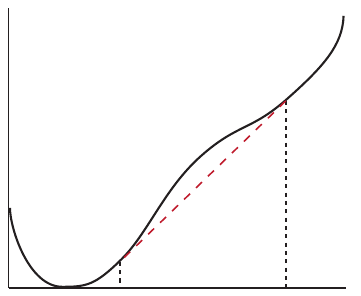}};
    \begin{scope}[x={(plot.south east)},y={(plot.north west)}]
      \path[use as bounding box] (0,-.05) rectangle (1.16,1);
      \node[right] at (.95,0.05) {$x$};
      \node at (1.05,.8) {$\Ip(x^{1/\gamma})$};
      \node at (.22,0) {$p^\gamma$};
      \node at (.35,-.01) {$\uq^\gamma$};
      \node at (.8,0) {$\oq^\gamma$};
    \end{scope}
  \end{tikzpicture}
\end{center}
\caption{Illustration of the convex minorant of $x \mapsto\Ip(x^{1\gamma})$.}
\label{fig:convex-minorant-gamma}
\end{figure}

Let $\cB_\gamma \subset [0,1]^2$ denote the set of all points $(p, q)$ such
that $(q^\gamma, h(q))$ does not lie on the convex minorant of $x \mapsto
\Ip(x^{1/\gamma})$. Each vertical section of $\cB_\gamma$ is thus the interval
$(\uq^\gamma, \oq^\gamma)$ described in the previous paragraph. For example,
$\cB_2$ is the shaded region in Fig.~\ref{fig:B2-phase}, and
the boundaries of $\cB_\gamma$ for additional values of $\gamma$ were plotted in
Figure~\ref{fig:phase-d} (see Section~\ref{sec:intro}).

We shall show that each $\cB_\gamma$ resembles a rotated V-shape. More
importantly, we show that $\cB_\gamma$ strictly contains $\cB_{\gamma'}$ if $\gamma > \gamma'$.

The first lemma describes the shape of the function $x \mapsto \Ip(x^{1/\gamma})$.

\begin{lemma}
  \label{lem:hp-shape}
  The function $x \mapsto \Ip(x^{1/\gamma})$ with domain $(0,1)$ is convex
  for $0 < \gamma \leq 1$. If $\gamma > 1$, and
  \[
  p \geq p_0(\gamma) := \frac{\gamma-1}{\gamma-1 +
    e^{\gamma/(\gamma-1)}} \, ,
  \]
  then the function is also convex. If $\gamma > 1$ and $0
  < p < p_0$, then the function has exactly two inflection points
  (both to the right of $x = p^\gamma$),
  with a region of concavity in the middle. Finally the function has
  infinite derivatives at both endpoints of $(0,1)$.
\end{lemma}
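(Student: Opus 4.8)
The plan is to reduce the whole statement to a sign analysis of the second derivative of $\psi(x):=\Ip(x^{1/\gamma})$ on $(0,1)$. Writing $u=x^{1/\gamma}$ (an increasing bijection of $(0,1)$), the chain rule gives $\psi'(u^\gamma)=\tfrac1\gamma u^{1-\gamma}\Ip'(u)$, so $\psi$ is convex on $(0,1)$ if and only if $\Phi(u):=u^{1-\gamma}\Ip'(u)$ is non-decreasing there. Using the elementary identities $\Ip'(u)=\log\tfrac{u(1-p)}{p(1-u)}$ and $\Ip''(u)=\tfrac1{u(1-u)}$ (so that $u\Ip''(u)=\tfrac1{1-u}$), one computes $\Phi'(u)=u^{-\gamma}G(u)$, where
\[
G(u):=\frac1{1-u}-(\gamma-1)\Ip'(u).
\]
Thus the convex/concave structure of $\psi$ is governed entirely by the sign of $G$ on $(0,1)$, and an inflection point of $\psi$ at $x=x_0$ corresponds to a sign change of $G$ at $u_0=x_0^{1/\gamma}$.

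For $\gamma=1$ the function is $\Ip$ itself, which is convex since $\Ip''>0$ (equivalently $G(u)=\tfrac1{1-u}>0$); the remaining range $0<\gamma<1$ is handled by the same sign analysis of $G$. The substantive case is $\gamma>1$, which I would treat as follows. First, since $\Ip'(u)\le 0$ on $(0,p]$ and $\gamma-1>0$, we get $G>0$ on $(0,p]$; this is exactly why any inflection point of $\psi$ must lie to the right of $x=p^\gamma$. Next, differentiating, $G'(u)=\tfrac{\gamma u-(\gamma-1)}{u(1-u)^2}$, so $G$ strictly decreases on $(0,u_*)$ and strictly increases on $(u_*,1)$ with a unique minimum at $u_*=\tfrac{\gamma-1}{\gamma}$; moreover $G(u)\to+\infty$ as $u\to 0^+$ and as $u\to 1^-$ (in the latter the pole $\tfrac1{1-u}$ dominates the logarithmic blow-up of $\Ip'$). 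Hence $G$ has the shape of a single well, and $\psi$ is convex on $(0,1)$ if and only if $G(u_*)\ge 0$. Since $1-u_*=\tfrac1\gamma$ and $\tfrac{u_*}{1-u_*}=\gamma-1$, we obtain $G(u_*)=\gamma-(\gamma-1)\log\!\big(\tfrac{(\gamma-1)(1-p)}{p}\big)$, and rearranging $G(u_*)\ge 0$ gives exactly $p\ge p_0(\gamma)=\tfrac{\gamma-1}{\gamma-1+e^{\gamma/(\gamma-1)}}$. When $p<p_0(\gamma)$ we have $G(u_*)<0$ while $G=+\infty$ at both endpoints, so the single-well shape forces $G$ to have exactly two zeros $p<u_1<u_*<u_2<1$; translating back, $\psi$ has precisely the two inflection points $x=u_1^\gamma<u_2^\gamma$ (both $>p^\gamma$), is convex on $(0,u_1^\gamma)$, concave on $(u_1^\gamma,u_2^\gamma)$, and convex on $(u_2^\gamma,1)$. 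The infinite endpoint slopes are read off directly from $\psi'(x)=\tfrac1\gamma x^{1/\gamma-1}\Ip'(x^{1/\gamma})$: as $x\downarrow 0$ the factor $x^{1/\gamma-1}\to\infty$ while $\Ip'(x^{1/\gamma})\to-\infty$, so $\psi'(0^+)=-\infty$, and as $x\uparrow 1$ we have $\Ip'(x^{1/\gamma})\to+\infty$, so $\psi'(1^-)=+\infty$.

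The chain-rule reduction and the derivative computations are routine; the real content is (i) establishing the single-well shape of $G$ together with the endpoint limits $G(0^+)=G(1^-)=+\infty$, which is what upgrades "at least two inflection points" to "exactly two" and confines them to the right of $p^\gamma$, and (ii) recognizing that the extreme value of $G$ is attained precisely at $u_*=(\gamma-1)/\gamma$ and repackaging $G(u_*)\ge0$ into the closed form $p_0(\gamma)$. I expect step (i) to be the main obstacle in the sense of requiring the most care, since it is what guarantees no spurious additional sign changes of $G$.
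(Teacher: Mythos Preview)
For the substantive case $\gamma>1$ your argument is essentially identical to the paper's: both compute the second derivative, both isolate the same sign-determining function (your $G(u)=\tfrac{1}{1-u}-(\gamma-1)\Ip'(u)$ is exactly the paper's $\tfrac{1}{1-q}-(\gamma-1)\log\tfrac{q}{1-q}+(\gamma-1)\log\tfrac{p}{1-p}$), and both locate its unique minimum at $u_*=(\gamma-1)/\gamma$ via the same derivative $G'(u)=\tfrac{\gamma u-(\gamma-1)}{u(1-u)^2}$ and then translate $G(u_*)\ge 0$ into the threshold $p_0(\gamma)$. The only packaging difference is that the paper writes out $\psi''$ directly whereas you first pass through $\Phi(u)=u^{1-\gamma}\Ip'(u)$; this buys nothing either way.

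There is, however, a genuine gap in your treatment of $0<\gamma<1$, which you dismiss as ``handled by the same sign analysis of $G$.'' If you actually run that analysis you find $-(\gamma-1)\Ip'(u)=(1-\gamma)\Ip'(u)\to -\infty$ as $u\to 0^+$, so $G(0^+)=-\infty$ and hence $\psi$ is \emph{concave} near $0$; e.g.\ for $\gamma=\tfrac12$, $p=\tfrac12$, $x=0.1$ one has $\psi''(x)=2\Ip'(x^2)+4x^2\Ip''(x^2)\approx 2(-4.6)+4(0.01)(101)<0$. The paper's own argument in this range invokes ``the composition of two convex functions is convex,'' which is false without the outer function being nondecreasing (and $\Ip$ is decreasing on $(0,p)$), so both proofs fail here and in fact the convexity assertion does not hold for $\gamma<1$. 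This regime is immaterial for the paper's applications, which only use $\gamma\ge 1$. Relatedly, your endpoint claim $\psi'(0^+)=-\infty$ relies on $x^{1/\gamma-1}\to\infty$, which needs $\gamma>1$; for $\gamma<1$ the same computation gives $\psi'(0^+)=0$.
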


\begin{proof}
When $0 < \gamma \leq 1$, $x \mapsto x^{1/\gamma}$ is convex. Since the composition
of two convex functions is convex we deduce that $x \mapsto \Ip(x^{1/\gamma})$ is convex.

Next, assume $d \geq 1$. We have
\[
\Ip'(x) = \log \frac{x}{1-x} - \log\frac{p}{1-p} \, ,
\qquad
\text{and}
\qquad
\Ip''(x) = \frac{1}{x(1-x)} \, .
\]
Therefore,
\begin{align*}
  \frac{d}{dx} \Ip(x^{1/\gamma}) &= \frac{1}{\gamma} x^{1/\gamma-1} h'_p(x^{1/\gamma})
  \end{align*}
and
  \begin{align*}
  \frac{d^2}{dx^2} \Ip(x^{1/\gamma}) &=
  \frac{1}{\gamma^2} x^{2/\gamma-2} h''(x^{1/\gamma}) +
  \frac{1}{\gamma}\paren{\frac{1}{\gamma}-1} x^{1/\gamma - 2} h'_p(x^{1/\gamma}) \\
  &= \frac{x^{1/\gamma-2}}{\gamma^2} \paren{x^{1/\gamma} \Ip''(x^{1/\gamma}) - (\gamma-1)
    h'_p(x^{1/\gamma})} \, .
\end{align*}
The claim on infinite derivatives easily follows from
the above formulas.
Setting $x = q^\gamma$, we have
\begin{align}
  \frac{d^2}{dx^2} \Ip(x^{1/\gamma}) \Big\vert_{x= q^\gamma}
  &= \frac{q^{1-2\gamma}}{\gamma^2} \paren{q \Ip''(q) - (\gamma-1)\Ip'(q))} \nonumber
  \\
  &= \frac{q^{1-2\gamma}}{\gamma^2}\paren{\frac{1}{1-q} - (\gamma-1)
      \log\frac{q}{1-q}  + (\gamma-1) \log \frac{p}{1-p}} \, . \label{eq:h_p''}
\end{align}
Hence, $\Ip(x^{1/\gamma})$ is convex at $x = q^\gamma$ whenever $\frac{1}{1-q} -
(\gamma-1) \log\frac{q}{1-q} \geq - (\gamma-1) \log \frac{p}{1-p}$ and concave
otherwise. The fact that
\[
\frac{d}{dq}\paren{\frac{1}{1-q} - (\gamma-1)\log\frac{q}{1-q}} =
\frac{\gamma q - \gamma + 1}{q(1-q)^2}
\]
implies that $\frac{1}{1-q} - (\gamma-1)\log\frac{q}{1-q}$ is decreasing until $q =
(\gamma-1)/\gamma$ and then increasing afterwards. It diverges to $+\infty$
at both endpoints of $(0,1)$ and attains a minimum value
of $\gamma - (\gamma-1)\log(\gamma-1)$ at $q = (\gamma-1)/\gamma$.
The term $(\gamma-1)\log\frac{p}{1-p}$
of Eq.~\eqref{eq:h_p''} is increasing for $p \in (0,1)$ and
surjective onto the reals. Therefore, $\Ip''(x^{1/d}) \geq 0$ for all $x$ if
$\gamma - (\gamma-1)\log(\gamma-1) + (\gamma-1) \log \frac{p}{1-p} \geq 0$, which is
equivalent to having $p \geq \frac{\gamma-1}{\gamma-1 + e^{\gamma/(\gamma-1)}}$. Additionally, if $p <
\frac{\gamma-1}{\gamma-1 + e^{\gamma/(\gamma-1)}}$, then $\Ip''(x)$ starts as positive, becomes
  negative, then turns positive again.
\end{proof}

\begin{figure}
\begin{center}
  \begin{tikzpicture}[font=\small]
    \node[anchor=south west] (plot) at (0,0){
         \includegraphics{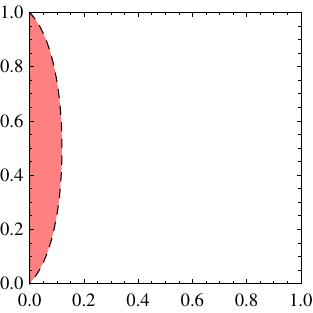}};
    \begin{scope}[x={(plot.south east)},y={(plot.north west)}]
    \node at (.17,.53) {$\cB_2$};
  \end{scope}
  \end{tikzpicture}
\end{center}
\caption{The region $\cB_2$ consisting of all points $(p,q)$ such that $(q^2,\Ip(q))$ does not lie on the convex minorant of $x\mapsto \Ip(\sqrt{x})$.}
\label{fig:B2-phase}
\end{figure}

We next give an explicit description of the
region $\cB_2$.

\begin{lemma}[$\gamma = 2$ case]
  \label{lem:d2-boundary}
  Let $p, q \in (0,1)$. The point $(q^2, \Ip(q))$ lies strictly above
  the
  convex minorant of $x \mapsto \Ip(\sqrt{x})$ if and only if
  $p < \paren{1 + (q^{-1} - 1)^{1/(1-2q)}}^{-1}$.
\end{lemma}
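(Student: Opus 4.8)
The plan is to reduce the claimed inequality to a statement about the quantity $c:=\log\frac{p}{1-p}$. Setting $c_0(q):=\frac{1}{1-2q}\log\frac{q}{1-q}$ for $q\neq\tfrac12$ and taking logarithms, one checks that $p<\bigl(1+(q^{-1}-1)^{1/(1-2q)}\bigr)^{-1}$ is equivalent to $c<c_0(q)$, so it suffices to prove that $(q^2,\Ip(q))$ lies strictly above the convex minorant of $\psi(x):=\Ip(\sqrt x)$ on $[0,1]$ if and only if $c<c_0(q)$; the borderline case $q=\tfrac12$, where the threshold degenerates to $\lim_{q\to1/2}(\cdots)=p_0(2)=\tfrac1{1+e^2}$, I will dispose of separately via Lemma~\ref{lem:hp-shape}. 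The structural fact I will use is that, since $\psi$ is differentiable on $(0,1)$, the point $(q^2,\psi(q^2))$ lies on the convex minorant of $\psi$ if and only if the tangent line $T$ to $\psi$ at $x=q^2$ satisfies $T\le\psi$ on all of $[0,1]$: on the minorant there is a supporting affine function at $(q^2,\psi(q^2))$, which by differentiability must equal $T$, and the converse is immediate from the description of the convex minorant as the supremum of affine functions below $\psi$.

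The whole argument then rests on one short computation. Writing $\Ip(t)=h(t)-ct-\log(1-p)$ with $h(t)=t\log t+(1-t)\log(1-t)$, so $\psi(t^2)=h(t)-ct-\log(1-p)$ and $\psi'(t^2)=\tfrac{1}{2t}(h'(t)-c)$ with $h'(t)=\log\tfrac{t}{1-t}$, the tangent at $x=q^2$ is $T(x)=\psi(q^2)+\tfrac{h'(q)-c}{2q}(x-q^2)$; evaluating at $x=(1-q)^2$ and using $(1-q)^2-q^2=1-2q$ together with $h(1-q)=h(q)$ (hence $\psi(q^2)-\psi((1-q)^2)=c(1-2q)$) yields
\[
T\bigl((1-q)^2\bigr)-\psi\bigl((1-q)^2\bigr)=\frac{(1-2q)^2}{2q}\,\bigl(c_0(q)-c\bigr).
\]
If $c<c_0(q)$ this is strictly positive, so $T$ is not $\le\psi$ everywhere, and by the criterion above $(q^2,\Ip(q))$ lies strictly above the convex minorant. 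This gives the ``$\Leftarrow$'' direction at once.

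For ``$\Rightarrow$'' I must show that $c\ge c_0(q)$ forces $T\le\psi$ on $[0,1]$. Put $H(t):=\psi(t^2)-T(t^2)$; then $H(q)=H'(q)=0$, $H'(t)=h'(t)-c-At$ with $A:=\tfrac{h'(q)-c}{q}$, $H''(t)=\tfrac{1}{t(1-t)}-A$, and the same manipulation as above gives the two identities $H(t)-H(1-t)=\tfrac{(2t-1)(1-2q)}{2q}(c-c_0(q))$ and $H'(\tfrac12)=\tfrac{(1-2q)(c-c_0(q))}{2q}$. By the first identity it suffices (assume $q<\tfrac12$; the case $q>\tfrac12$ is symmetric) to show $H\ge0$ on $[0,\tfrac12]$, since for $t>\tfrac12$ one has $H(t)\ge H(1-t)$. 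On $(0,\tfrac12)$ the function $\tfrac{1}{t(1-t)}$ decreases from $+\infty$ to $4$, so $H''$ is either $\ge0$ throughout (when $A\le4$, in which case $H$ is convex on all of $(0,1)$ and $H\ge0$ is immediate from $H(q)=H'(q)=0$) or positive on $(0,t_1)$ and negative on $(t_1,\tfrac12)$, where $t_1(1-t_1)=1/A$. In the latter case the one extra ingredient is the elementary inequality $c_0(q)>h'(q)-\tfrac{1}{1-q}$, equivalent to $w(q):=(1-2q)+2q(1-q)\log\tfrac{q}{1-q}>0$ on $(0,\tfrac12)$, which follows from $w(\tfrac12)=0$ and $w'(q)=2(1-2q)\log\tfrac{q}{1-q}<0$; it shows $H''(q)>0$, hence $q<t_1$, so $H$ is convex on $[0,q]$ and therefore minimized there at $q$ with value $0$. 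On $[q,\tfrac12]$ the derivative $H'$ starts at $H'(q)=0$, is increasing on $[q,t_1]$ and decreasing on $[t_1,\tfrac12]$, and ends at $H'(\tfrac12)=\tfrac{(1-2q)(c-c_0(q))}{2q}\ge0$; being unimodal with nonnegative endpoint values it is $\ge0$ throughout, so $H$ is nondecreasing on $[q,\tfrac12]$ and hence $\ge0$ there. This proves $H\ge0$ on $[0,\tfrac12]$, and the symmetry identity extends it to $[0,1]$, completing ``$\Rightarrow$''.

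Finally, the case $q=\tfrac12$: if $p\ge p_0(2)$ then $\psi$ is convex by Lemma~\ref{lem:hp-shape}, so $(\tfrac14,\Ip(\tfrac12))$ lies on its own convex minorant, while if $p<p_0(2)$ then by the same lemma $\psi$ has two inflection points straddling $x=\tfrac14$ with a strictly concave arc between them, and any interior point of a concave arc lies strictly above the convex minorant (it lies above the chord joining the endpoints of the arc, which in turn dominates the minorant); this matches the limiting threshold $p_0(2)$. The main obstacle is the ``$\Rightarrow$'' direction --- precisely, ruling out a downward excursion of $H$ to the right of its double zero at $t=q$ once $c\ge c_0(q)$ --- and the mechanism making it work is the interplay of the two sign facts $H''(q)>0$ and $H'(\tfrac12)\ge0$ with the convex--concave shape of $H$ on each half of $[0,1]$; the bookkeeping of signs according to whether $q<\tfrac12$ or $q>\tfrac12$ and whether $A\lessgtr4$ is routine but needs care.
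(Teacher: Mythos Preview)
Your proof is correct. The approach, however, is considerably more laborious than the paper's, even though both ultimately exploit the same symmetry $t\leftrightarrow 1-t$.

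The paper's argument is essentially one line: it observes that
\[
\Ip(x)-x^2\log\tfrac{1-p}{p}
= x\log x+(1-x)\log(1-x)-x(1-x)\log\tfrac{p}{1-p}-\log(1-p)
\]
is invariant under $x\mapsto 1-x$. This immediately tells you that the lower common tangent of $x\mapsto\Ip(\sqrt x)$ has slope $\log\tfrac{1-p}{p}$, and that its contact points $\uq,\oq$ are the zeros of the derivative $\log\tfrac{x}{1-x}-(1-2x)\log\tfrac{p}{1-p}$, from which the claimed inequality drops out. You instead work with the tangent-line characterization of the convex minorant and carry out a case analysis on the sign structure of $H(t)=\psi(t^2)-T(t^2)$; your use of the test point $(1-q)^2$ and the identity $H(t)-H(1-t)=\tfrac{(2t-1)(1-2q)}{2q}(c-c_0(q))$ are precisely the shadow of the paper's symmetry, but you rebuild the consequences by hand rather than invoking the symmetry globally. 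What your route buys is that every step is verified explicitly (including, e.g., the sign of $H''(q)$ via the auxiliary inequality $w(q)>0$), whereas the paper's version leaves the translation from ``$\uq<q<\oq$'' to the final inequality, and the convex case $p\ge p_0(2)$, slightly implicit. One small point: when you treat $q=\tfrac12$, the claim that the two inflection points ``straddle $x=\tfrac14$'' is not in the \emph{statement} of Lemma~\ref{lem:hp-shape}; it follows from its proof (the second derivative of $\Ip(x^{1/2})$ is minimized at $q=\tfrac12$), or more directly from checking $\psi''(\tfrac14)<0$ when $p<p_0(2)$.
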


\begin{proof}
We claim that the lower
common tangent of $x \mapsto \Ip(\sqrt x)$ has slope $\log(\frac{1-p}{p})$. To show this, it
suffices to check that $\Ip(\sqrt x) - x \log(\frac{1-p}{p})$ has a
horizontal common lower tangent, and it suffices to check the same
thing for $\Ip(x) - x^2 \log (\frac{1-p}{p})$. Observe that
\begin{align*}
  \Ip(x) - x^2\log\frac{1-p}p
  &= x \log \frac{x}{p} + (1-x)\log \frac{1-x}{1-p} -
   x^2\log\frac{1-p}{p}
  \\
  &= x\log x + (1-x)\log(1-x)-x(1-x)\log\frac{p}{1-p} - \log(1-p)
\end{align*}
is invariant under $x \mapsto 1-x$, so that its lower tangent must be
horizontal by symmetry, and let it touch the curve at $x = \uq, \oq$,
so that $0 < \uq < \oq < 1$ are the zeros of the derivative, namely
\begin{equation}
  \label{eq:tangent-deriv}
\log\frac{x}{1-x} - (1-2x)\log\frac{p}{1-p}\,.
\end{equation}
It follows that $(q^2, \Ip(q))$ lies strictly above the convex
minorant if and only if $\uq < q < \oq$, which is equivalent to
having $\frac{1}{1-2q}\log\frac{q}{1-q} \leq \log\frac{p}{1-p}$.
Rearranging the latter concludes the proof.
\end{proof}

For $\gamma > 1$ and $0 < p < p_0(\gamma)$, define $\uq = \uq(\gamma,p)$ and $\oq = \oq(\gamma,p)$ to be such that the
lower common tangent to $x \mapsto \Ip(x^{1/\gamma})$ touches the curve at points
$(\uq^\gamma, \Ip(\uq)$ and $(\oq^\gamma, \Ip(\oq))$.
An examination of the geometry of the curve, as
  illustrated in Fig.~\ref{fig:uq-q1-q2-oq}, immediately leads to the following lemma:

\begin{lemma}
  \label{lem:hp-contain}
  Let $\gamma > 1$ and $0 < p < p_0(\gamma)$. Let $0 < q_1 < q_2 < 1$. If the line
  segment joining points $(q_1^\gamma, \Ip(q_1))$ and $(q_2^\gamma, \Ip(q_2))$
  lies below the curve $\{(q^\gamma, \Ip(q)) : 0 \leq q \leq 1\}$ and is not
  tangent to the curve at one of the end points, then this segment lies
  strictly above the lower common tangent of the curve. Consequently,
  $\uq (\gamma, p) < q_1 < q_2 < \oq (\gamma, p)$.
\end{lemma}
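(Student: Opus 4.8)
The plan is to pass to the coordinate in which the curve is the graph of $\psi(x) := \Ip(x^{1/\gamma})$. Put $x_i := q_i^\gamma$ and let $c$ be the affine function whose graph is the given segment, so that $c(x_i) = \psi(x_i)$, $c \le \psi$ on $[x_1,x_2]$, and (by hypothesis) $c$ is tangent to $\psi$ at neither endpoint. The only inputs I would quote are contained in Lemma~\ref{lem:hp-shape} and the description of the convex minorant preceding it: since $0 < p < p_0(\gamma)$ there are $p^\gamma < a < b < 1$ with $\psi$ strictly convex on $(0,a]$, strictly concave on $[a,b]$, and strictly convex on $[b,1)$; moreover the lower common tangent line $\ell$ lies weakly below $\psi$ throughout $(0,1)$, touches it tangentially at exactly the two abscissas $\uq^\gamma < \oq^\gamma$, and these contact points satisfy $\uq^\gamma \le a$ and $b \le \oq^\gamma$ (a line lying below $\psi$ cannot meet its strictly concave arc $(a,b)$, so each contact point sits in one of the convex regions). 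The target is then precisely $\uq^\gamma < x_1 < x_2 < \oq^\gamma$, from which ``strictly above the tangent'' will be immediate; although the lemma lists the two assertions in the opposite order, it is cleaner to establish the inequality on the $q_i$ first.

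First I would rule out the segment being trapped in a convex region: if $[x_1,x_2] \subseteq [0,a]$ or $[x_1,x_2] \subseteq [b,1]$, then strict convexity forces $c > \psi$ on $(x_1,x_2)$, contradicting $c \le \psi$. Hence $x_1 < b$ and $x_2 > a$, so in particular $x_1 < \oq^\gamma$ and $x_2 > \uq^\gamma$.

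The main step is to upgrade this to $\uq^\gamma < x_1$ (the bound $x_2 < \oq^\gamma$ being the left–right mirror image). Assume $x_1 \le \uq^\gamma$. If $x_1 < \uq^\gamma$, then $\uq^\gamma \in (x_1,x_2)$ (as $\uq^\gamma \le a < x_2$), so $c(\uq^\gamma) \le \psi(\uq^\gamma) = \ell(\uq^\gamma)$, while $x_1 \notin \{\uq^\gamma,\oq^\gamma\}$ gives $c(x_1) = \psi(x_1) > \ell(x_1)$; the affine function $c - \ell$ is thus positive at $x_1$ and nonpositive at the larger point $\uq^\gamma$, hence strictly decreasing, so $c(x_2) < \ell(x_2) \le \psi(x_2) = c(x_2)$, which is absurd. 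If instead $x_1 = \uq^\gamma$, then $c(x_1) = \psi(x_1) = \ell(x_1)$ and, since $c \le \psi$ just to the right of $x_1$ with equality at $x_1$, the slope of $c$ is at most $\psi'(x_1) = \ell'$; equality would give $c \equiv \ell$, i.e.\ the segment would be tangent to $\psi$ at its endpoint, which is excluded, so $c$ has slope strictly less than $\ell$ and therefore $c < \ell$ on $(x_1,1]$, again contradicting $c(x_2) = \psi(x_2) \ge \ell(x_2)$. The symmetric argument at the right endpoint gives $x_2 < \oq^\gamma$.

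Finally, with $x_1,x_2 \in (\uq^\gamma,\oq^\gamma)$ we have $c(x_i) = \psi(x_i) > \ell(x_i)$ for $i=1,2$, and since $c-\ell$ is affine it stays positive on all of $[x_1,x_2]$ — this is exactly the statement that the segment lies strictly above the common tangent. Applying the increasing map $t \mapsto t^\gamma$ turns $\uq^\gamma < x_1 < x_2 < \oq^\gamma$ into $\uq(\gamma,p) < q_1 < q_2 < \oq(\gamma,p)$, completing the argument. I expect the only genuinely delicate point to be bookkeeping the boundary case $x_1 = \uq^\gamma$ (and its mirror $x_2 = \oq^\gamma$), which is precisely where the non-tangency hypothesis is consumed; everything else is a direct reading of the convex–concave–convex picture supplied by Lemma~\ref{lem:hp-shape}, matching the paper's remark that the lemma follows from ``an examination of the geometry of the curve''.
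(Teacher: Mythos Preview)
Your argument is correct and is precisely the ``examination of the geometry of the curve'' that the paper invokes without details: the paper offers no proof beyond a reference to Figure~\ref{fig:uq-q1-q2-oq}, whereas you carry out the convex--concave--convex case analysis rigorously, including the boundary case $x_1=\uq^\gamma$ where the non-tangency hypothesis is used. There is nothing to add; your write-up is a faithful and complete expansion of the picture the paper points to.
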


\begin{figure}
\begin{center}
  \begin{tikzpicture}[font=\small]
    \node[anchor=south west] (plot) at (0,0)
    {\includegraphics{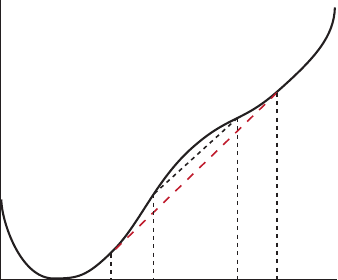}};
    \begin{scope}[x={(plot.south east)},y={(plot.north west)}]
      \path[use as bounding box] (0,-.08) rectangle (1.23,1);
    \node at (.35,-.038) {$\uq^\gamma$};
    \node at (.45,-.03) {$q_1^\gamma$};
    \node at (.7,-.03) {$q_2^\gamma$};
    \node at (.8,-.024) {$\oq^\gamma$};
    \node at (1.02,.03) {$x$};
    \node at (1.1,.9) {$\Ip(x^{1/\gamma})$};
    \end{scope}
  \end{tikzpicture}
  \vspace{-0.3cm}
\end{center}
\caption{Illustration of the convex minorant of $x\mapsto \Ip(x^{1/\gamma})$ in the setting of Lemma~\ref{lem:hp-contain}.}
\label{fig:uq-q1-q2-oq}
  \end{figure}

We now apply the above lemma to describe the shape of the
regions $\cB_\gamma$.

\begin{lemma}
  \label{lem:hp-p}
  If $\gamma > 1$ and $0 < p < p' < p_0(\gamma)$, then $\uq(\gamma, p) < \uq(\gamma,
  p') < \oq(\gamma, p') < \oq(\gamma, p)$. So $\cB_\gamma$ is a
  rotated-V-shaped region.
\end{lemma}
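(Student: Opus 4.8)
The strategy is to reduce the claim to Lemma~\ref{lem:hp-contain} applied at the parameter $p$. Write $C_p(x) := \Ip(x^{1/\gamma})$ and $C_{p'}(x) := h_{p'}(x^{1/\gamma})$ for $x \in [0,1]$. From the identity $\Ip(y) = h(y) - y\log\frac{p}{1-p} - \log(1-p)$ one obtains $C_p(x) = C_{p'}(x) + \Delta(x)$, where $\Delta(x) := c\,x^{1/\gamma} + \kappa$ with $c := \log\frac{p'}{1-p'} - \log\frac{p}{1-p}$ and $\kappa$ a constant. Since $t \mapsto \frac{t}{1-t}$ is increasing and $p < p'$ we have $c > 0$, and since $\gamma > 1$ the map $x \mapsto x^{1/\gamma}$ is strictly concave on $[0,1]$, so $\Delta$ is strictly concave. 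Thus $C_p$ differs from $C_{p'}$ by a strictly concave function, which is the fact I would exploit.

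Set $\alpha := \uq(\gamma,p')^\gamma$ and $\beta := \oq(\gamma,p')^\gamma$, and let $L'$ be the lower common tangent of $C_{p'}$, so that $L'$ passes through the touch points $(\alpha, C_{p'}(\alpha))$ and $(\beta, C_{p'}(\beta))$ and, by definition of the convex minorant, $L' \le C_{p'}$ on $[\alpha,\beta]$ with equality exactly at $\alpha$ and $\beta$. Now consider the chord $S$ of $C_p$ over $[\alpha,\beta]$. It decomposes as $S = S_1 + S_2$, where $S_1$ is the chord of $C_{p'}$ over $[\alpha,\beta]$ — which is exactly the restriction of $L'$, being the unique line through the two touch points — and $S_2$ is the chord of $\Delta$ over $[\alpha,\beta]$. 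On $[\alpha,\beta]$ we have $S_1 = L' \le C_{p'}$, and $S_2 \le \Delta$ by concavity, strictly on the open interval; adding these, $S \le C_p$ on $[\alpha,\beta]$ and strictly in the interior, so $S$ lies below the curve $\{(q^\gamma,\Ip(q)) : 0 \le q \le 1\}$. Moreover, at the endpoint $\alpha$ the slope of $S_1$ equals that of $C_{p'}$ (since $L'$ is tangent there), while the slope of $S_2$, being a chord of the strictly concave $\Delta$, is strictly smaller than the slope of $\Delta$ at $\alpha$; hence the slope of $S$ at $\alpha$ is strictly smaller than that of $C_p$, so $S$ is not tangent to the curve at $\alpha$.

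Therefore Lemma~\ref{lem:hp-contain}, applied at parameter $p$ with $q_1 = \uq(\gamma,p')$ and $q_2 = \oq(\gamma,p')$, gives $\uq(\gamma,p) < \uq(\gamma,p') < \oq(\gamma,p') < \oq(\gamma,p)$, which is the asserted strict monotonicity. It follows that the lower boundary $p \mapsto \uq(\gamma,p)$ of $\cB_\gamma$ is strictly increasing and the upper boundary $p \mapsto \oq(\gamma,p)$ is strictly decreasing on $(0, p_0(\gamma))$, the two curves meeting at $q = (\gamma-1)/\gamma$ as $p \uparrow p_0(\gamma)$ (where the two inflection points of $C_p$ furnished by Lemma~\ref{lem:hp-shape} coalesce), so $\cB_\gamma$ indeed has the shape of a rotated V. I expect the only delicate point to be the verification of the hypotheses of Lemma~\ref{lem:hp-contain} — that $S$ lies below the curve and is not tangent at an endpoint — but both follow directly from the strict concavity of $x \mapsto x^{1/\gamma}$ and the S-shape of $C_{p'}$; no new estimates are required.
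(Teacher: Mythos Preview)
Your proof is correct and follows essentially the same route as the paper: decompose $\Ip(x^{1/\gamma}) = h_{p'}(x^{1/\gamma}) + c\,x^{1/\gamma} + \text{const}$ with $c = \log\frac{p'(1-p)}{p(1-p')} > 0$, observe that the added term is strictly concave, so the chord of $C_p$ between $\uq(\gamma,p')^\gamma$ and $\oq(\gamma,p')^\gamma$ lies below the curve and is not tangent at an endpoint, then invoke Lemma~\ref{lem:hp-contain}. Your write-up is in fact more careful than the paper's on the endpoint non-tangency (you verify it via the slope comparison rather than just citing ``the $x^{1/\gamma}$ term''), and the extra remark about the two boundary curves meeting at $q=(\gamma-1)/\gamma$ as $p\uparrow p_0(\gamma)$ is a nice addition.
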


  \begin{figure}
  \begin{center}
  \begin{tikzpicture}[font=\small]
    \node[] (plot1) at (0,0)
    {\includegraphics[scale=.7]{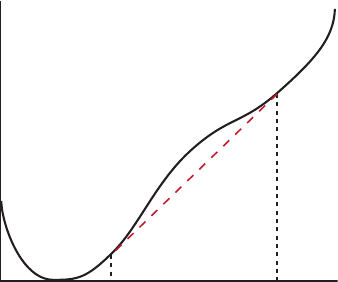}}; 
    \node[right=4em] (plot2) at (plot1.east)
    {\includegraphics[scale=.7]{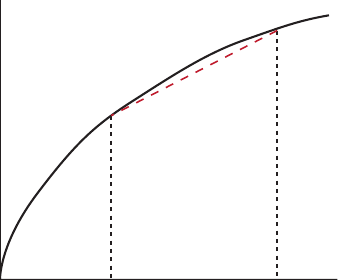}};
    \node[right=4em] (plot3) at (plot2.east)
    {\includegraphics[scale=.7]{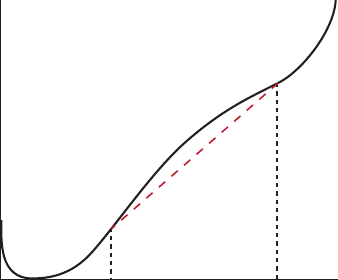}};


    \begin{scope}[shift={(plot1.south west)},x={(plot1.south east)},y={(plot1.north west)}]
    \node at (.35,-.038) {$q_1^\gamma$};
    \node at (.8,-.024) {$q_2^\gamma$};
    \node at (1.02,.03) {$x$};
    \node at (.7,.9) {$h_{p'}(x^{1/\gamma})$};
    \end{scope}
    \begin{scope}[shift={(plot2.south west)},x={(plot2.south east)},y={(plot2.north west)}]
    \node at (.35,-.03) {$q_1^\gamma$};
    \node at (.8,-.03) {$q_2^\gamma$};
    \node at (1.02,.03) {$x$};
    \node at (.4,.9) {$x^{1/\gamma} \log \frac{p'(1-p)}{p(1-p')}$};
    \end{scope}
    \begin{scope}[shift={(plot3.south west)},x={(plot3.south east)},y={(plot3.north west)}]
    \node at (.35,-.03) {$q_1^\gamma$};
    \node at (.8,-.03) {$q_2^\gamma$};
    \node at (1.02,.03) {$x$};
    \node at (.7,.9) {$h_{p}(x^{1/\gamma})$};
    \end{scope}
  \end{tikzpicture}
    \vspace{-0.3cm}
\end{center}
\caption{Illustration of the functions in Eq.~\eqref{eq-hp(x^(1/gamma))-breakdown}.}
\label{fig:hp(x^(1/gamma))-breakdown}
\end{figure}

\begin{proof}
  Let $q_1 = \uq(\gamma, p')$ and $q_2 = \oq(\gamma, p')$.
   As illustrated in Fig.~\ref{fig:hp(x^(1/gamma))-breakdown}, we have
  \begin{equation}
    \label{eq-hp(x^(1/gamma))-breakdown}
  \Ip(x^{1/\gamma}) = h_{p'}(x^{1/\gamma}) + x^{1/\gamma} \log\frac{p'(1-p)}{p(1-p')} +
  \log\frac{1-p'}{1-p} \, .
  \end{equation}
  The segment joining $(q_1^\gamma,h_{p'}(q_1))$ and $(q_2^\gamma, h_{p'}(q_2))$
  lies below $x \mapsto h_{p'}(x^{1/\gamma})$ by definition.
  Since $x \mapsto x^{1/\gamma} \log\frac{p'(1-p)}{p(1-p')}$ is concave,
  the segment joining $x = q_1^\gamma$ and $x = q_2^\gamma$ must also lie below
  the curve $x \mapsto \Ip(x^{1/\gamma})$, and it is not tangent to either
  endpoint due to the $x^{1/\gamma}$ term. The conclusion now follows from Lemma~\ref{lem:hp-contain}.
\end{proof}

\begin{lemma}
  \label{lem:hp-d}
  If $\gamma > \gamma' > 1$ and $0 < p< p_{\gamma'}$, then $\uq(\gamma, p) < \uq(\gamma',
  p) < \oq(\gamma', p) < \oq(\gamma, p)$. So $\cB_\gamma$ strictly
  contains $\cB_{\gamma'}$.
\end{lemma}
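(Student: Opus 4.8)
The plan is to follow the template of Lemma~\ref{lem:hp-p}, but with a \emph{multiplicative} change of variables in place of the additive decomposition used there, so as to transfer the lower common tangent of the $\gamma'$-curve into a bound on the $\gamma$-curve. First I would record that, since $p_0(\cdot)$ is increasing on $(1,\infty)$, the hypothesis $0<p<p_0(\gamma')$ also gives $p<p_0(\gamma)$; hence $\uq(\gamma,p),\oq(\gamma,p),\uq(\gamma',p),\oq(\gamma',p)$ are all well-defined, and by Lemma~\ref{lem:hp-shape} the touch points $\uq(\gamma',p)<\oq(\gamma',p)$ lie in $(p,1)$, to the right of the minimum $x=p^{\gamma'}$. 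Put $q_1=\uq(\gamma',p)$ and $q_2=\oq(\gamma',p)$. By Lemma~\ref{lem:hp-contain} the whole statement reduces to showing that the segment joining $(q_1^{\gamma},\Ip(q_1))$ and $(q_2^{\gamma},\Ip(q_2))$ lies below the curve $x\mapsto\Ip(x^{1/\gamma})$ and is not tangent to it at either endpoint; the claimed strict containment $\cB_{\gamma'}\subsetneq\cB_\gamma$ then follows by comparing vertical sections for $0<p<p_0(\gamma')$ and noting that for $p_0(\gamma')\le p<p_0(\gamma)$ the section of $\cB_{\gamma'}$ is empty while that of $\cB_\gamma$ is not.

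The key point is the identity $\Ip(x^{1/\gamma})=\Ip\big((x^{\gamma'/\gamma})^{1/\gamma'}\big)$ together with the fact that $\eta(x):=x^{\gamma'/\gamma}$ is strictly concave and increasing on $(0,1)$, since $\gamma'/\gamma<1$. By definition of $q_1,q_2$ the lower common tangent $y\mapsto Ay+B$ to $y\mapsto\Ip(y^{1/\gamma'})$ satisfies $\Ip(y^{1/\gamma'})\ge Ay+B$ for all $y$, with equality exactly at $y\in\{q_1^{\gamma'},q_2^{\gamma'}\}$, and its slope $A=\big(\Ip(q_2)-\Ip(q_1)\big)/\big(q_2^{\gamma'}-q_1^{\gamma'}\big)$ is positive because $\Ip$ is increasing on $[p,1]$ and $p<q_1<q_2$. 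Substituting $y=\eta(x)$ gives $\Ip(x^{1/\gamma})\ge A\,\eta(x)+B=:\psi(x)$ for all $x$, with equality exactly at $x\in\{q_1^{\gamma},q_2^{\gamma}\}$. Since $A>0$ and $\eta$ is strictly concave, $\psi$ is strictly concave; and because $\psi$ agrees with $\Ip(x^{1/\gamma})$ at $q_1^{\gamma}$ and $q_2^{\gamma}$, the chord of $\Ip(x^{1/\gamma})$ between these points coincides with the chord of $\psi$, which by concavity lies below $\psi$ on $[q_1^{\gamma},q_2^{\gamma}]$, hence below $\Ip(x^{1/\gamma})$ there.

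The part I expect to require the most care is ruling out tangency of this segment to the curve at the endpoints. Writing $m$ for the segment's slope, strict concavity of $\psi$ gives $\psi'(q_1^{\gamma})>m>\psi'(q_2^{\gamma})$. From $\Ip(x^{1/\gamma})\ge\psi(x)$ with equality at the left endpoint $q_1^{\gamma}$, the right-derivative of $\Ip(x^{1/\gamma})$ there is $\ge\psi'(q_1^{\gamma})>m$; from equality at the right endpoint $q_2^{\gamma}$, the left-derivative there is $\le\psi'(q_2^{\gamma})<m$. In each case the curve's slope at the endpoint differs from $m$, so the segment is not tangent to the curve there. Lemma~\ref{lem:hp-contain} now applies and yields $\uq(\gamma,p)<q_1<q_2<\oq(\gamma,p)$, that is, $\uq(\gamma,p)<\uq(\gamma',p)<\oq(\gamma',p)<\oq(\gamma,p)$, which is the assertion; the strict containment of $\cB_{\gamma'}$ in $\cB_\gamma$ follows as indicated above.
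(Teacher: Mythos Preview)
Your argument is correct and is essentially the same as the paper's: both push the lower common tangent of $y\mapsto \Ip(y^{1/\gamma'})$ through the substitution $y=x^{\gamma'/\gamma}$ to obtain a strictly concave curve (your $\psi$, the paper's $\ell$) lying below $x\mapsto \Ip(x^{1/\gamma})$ and touching it at $q_1^\gamma,q_2^\gamma$, then take the chord and invoke Lemma~\ref{lem:hp-contain}. Your write-up is simply a more explicit analytic rendering of the paper's pictorial description, with the non-tangency at the endpoints spelled out via the derivative comparison rather than asserted geometrically.
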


\begin{proof}
  Let $q_1 = \uq(\gamma', p)$ and $q_2 = \uq(\gamma', p)$. Let $\ell'$
  denote the line segment joining points $(q_1^{\gamma'}, \Ip(q_1))$ and
  $(q_2^{\gamma'}, \Ip(q_2))$, so that $\ell'$ is tangent to the curve $x
  \mapsto \Ip(x^{1/\gamma'})$.

  Consider a transformation of the plots induced by the change of
  variable $x = u^{\gamma'/\gamma}$. The plot of $x \mapsto
  \Ip(x^{1/\gamma'})$ becomes the plot of $u \mapsto
  \Ip(u^{1/\gamma})$ (see Fig.~\ref{fig:x-(1/gamma)-u-(1/gamma)}). Originally $\ell'$ was a line segment of positive
  slope lying below the curve $x \mapsto \Ip(x^{1/\gamma'})$ and
  tangent to it at both endpoints. Following the transformation $x =
  u^{\gamma'/\gamma}$ (recall that $\gamma'/\gamma < 1$) we see that $\ell'$
  becomes a concave curve $\ell$ still lying below the new curve $u
  \mapsto \Ip(u^{1/\gamma})$ and tangent to it at both endpoints.
  This implies that the line segment joining the two endpoints of $\ell$
  in the new frame lies below both curves and is tangent to neither at
  the endpoints. The conclusion then follows from
  Lemma~\ref{lem:hp-contain}.
 \begin{figure}
  \begin{center}
  \begin{tikzpicture}[font=\small]
    \node[] (plot1) at (0,0)
    {\includegraphics[scale=.85]{figures/fig-minorant-curve-a}};
    \node[right=6em] (plot2) at (plot1.east)
    {\includegraphics[scale=.85]{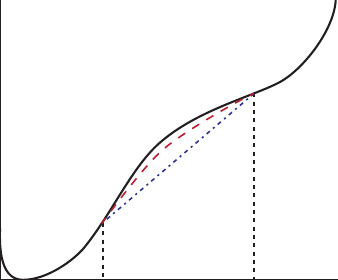}};

    \node[shift={(1,0)}] at (plot1.east) {\normalsize$\xrightarrow{x
        = u^{\gamma'/\gamma}}$};

    \begin{scope}[shift={(plot1.south west)},x={(plot1.south east)},y={(plot1.north west)}]
    \node at (.35,-.03) {$q_1^{\gamma'}$};
    \node at (.8,-.03) {$q_2^{\gamma'}$};
    \node at (1.02,.03) {$x$};
    \node at (.7,.9) {$h_{p}(x^{1/{\gamma'}})$};
    \end{scope}
    \begin{scope}[shift={(plot2.south west)},x={(plot2.south east)},y={(plot2.north west)}]
    \node at (.32,-.03) {$q_1^\gamma$};
    \node at (.75,-.03) {$q_2^\gamma$};
    \node at (1.02,.03) {$u$};
    \node at (.4,.9) {$\Ip(u^{1/\gamma})$};
    \end{scope}
  \end{tikzpicture}
\end{center}
\caption{The plot of $x\mapsto \Ip(x^{1\gamma})$ following a change of variable $x=u^{\gamma'/\gamma}$.}
\label{fig:x-(1/gamma)-u-(1/gamma)}
\end{figure}
\end{proof}

\bigskip
\section*{Acknowledgments}

We thank Amir Dembo and Ofer Zeitouni for fruitful discussions.
This work was initiated while Y.\ Z.\ was an intern at the Theory Group of Microsoft Research, and
he thanks the Theory Group for its hospitality.

\bibliographystyle{abbrv}
\bibliography{ldp_ref}

\end{document}